\renewcommand{\d}{\mathrm{d}}
\newcommand{\dx}{\mathrm{d}x}
\newcommand{\dy}{\mathrm{d}y}
\newcommand{\dt}{\mathrm{d}t}
\renewcommand{\rho}{\varrho}
\newcommand{\essliminf}{\operatornamewithlimits{ess\,lim\,inf}}
\newcommand{\esslimsup}{\operatornamewithlimits{ess\,lim\,sup}}
\let\TeXchi\chi
\newbox\chibox
\chibox \hbox{\raise\dp0 \box 0 }
\def\chi{\copy\chibox}
\def\Xint#1{\mathchoice
    {\XXint\displaystyle\textstyle{#1}}%
    {\XXint\textstyle\scriptstyle{#1}}%
    {\XXint\scriptstyle\scriptscriptstyle{#1}}%
    {\XXint\scriptscriptstyle\scriptscriptstyle{#1}}%
    \!\int}
\def\XXint#1#2#3{\setbox0=\hbox{$#1{#2#3}{\int}$}
    \vcenter{\hbox{$#2#3$}}\kern-0.5\wd0}
\def\bint{\Xint-}
\def\dashint{\Xint{\raise4pt\hbox to7pt{\hrulefill}}}
\def\Xiint#1{\mathchoice
    {\XXiint\displaystyle\textstyle{#1}}%
    {\XXiint\textstyle\scriptstyle{#1}}%
    {\XXiint\scriptstyle\scriptscriptstyle{#1}}%
    {\XXiint\scriptscriptstyle\scriptscriptstyle{#1}}%
    \!\iint}
\def\XXiint#1#2#3{\setbox0=\hbox{$#1{#2#3}{\iint}$}
    \vcenter{\hbox{$#2#3$}}\kern-0.5\wd0}
\def\biint{\Xiint{-\!-}}
\author[N. Liao]{Naian Liao}
\address{Naian Liao,
Fachbereich Mathematik, Universit\"at Salzburg,
Hellbrunner Str. 34, 5020 Salzburg, Austria}
\email{naian.liao@sbg.ac.at}
\newtheorem{proposition}{Proposition}[section]
\newtheorem{theorem}{Theorem}[section]
\newtheorem{lemma}{Lemma}[section]
\newtheorem{remark}{Remark}[section]
\numberwithin{equation}{section}
\numberwithin{theorem}{section}
\numberwithin{proposition}{section}
\numberwithin{lemma}{section}
\numberwithin{remark}{section}
\newcommand{\noi}{\noindent}
\newcommand{\dsty}{\displaystyle}
\newcommand{\txty}{\textstyle}
\newcommand{\al}{\alpha}
\newcommand{\gm}{\gamma}
\newcommand{\dl}{\delta}
\newcommand{\Dl}{\Delta}
\newcommand{\varep}{\varepsilon}
\newcommand{\vp}{\varphi}
\newcommand{\sig}{\sigma}
\newcommand{\om}{\omega}
\newcommand{\Om}{\Omega}
\newcommand{\z}{\zeta}
\newcommand{\rr}{\mathbb{R}}
\newcommand{\rn}{\rr^N}
\newcommand{\bl}[1]{\mathbf{#1}}
\newcommand{\dvg}{\operatorname{div}}
\newcommand{\essup}{\operatornamewithlimits{ess\,sup}}
\newcommand{\essinf}{\operatornamewithlimits{ess\,inf}}
\newcommand{\loc}{\operatorname{loc}}
\newcommand{\pl}{\partial}
\newcommand{\intl}{\int\limits}
\def\Xint#1{\mathchoice
    {\XXint\displaystyle\textstyle{#1}}%
    {\XXint\textstyle\scriptstyle{#1}}%
    {\XXint\scriptstyle\scriptscriptstyle{#1}}%
    {\XXint\scriptscriptstyle\scriptscriptstyle{#1}}%
    \!\int}
\def\XXint#1#2#3{\setbox0=\hbox{$#1{#2#3}{\int}$}
    \vcenter{\hbox{$#2#3$}}\kern-0.5\wd0}
\def\bint{\Xint-}
\def\dashint{\Xint{\raise4pt\hbox to7pt{\hrulefill}}}
\def\dashiint{\bint\kern-0.15cm\bint}
\newcommand{\ovl}[3]{\int_{#1}^{#2}\kern-#3pt\raise4pt\hbox to7pt{\hrulefill}\ }
\newcommand{\ovll}[3]{\intl_{#1}^{#2}\kern-#3pt\raise4pt\hbox to7pt{\hrulefill}\ }
\newcommand{\tvl}[2]{\iint_{#1}\kern-#2pt\raise4pt\hbox to7pt{\hrulefill}\ }
\newcommand{\bye}{\end{document}}
\begin{document}
%%%%%%%%%%%%%%%%%%%%%%%%%%%%%%%%%%%%%%%%%%%%%%%%%%%
\title[Regularity of weak supersolutions]{Regularity of weak supersolutions to elliptic and parabolic equations:
lower semicontinuity and pointwise behavior}
%%%%%%%%%%%%%%%%%%%%%%%%%%%%%%%%%%%%%%%%%%%%%%%%%%%
%%%%%%%%%%%%%%%%%%%%%%%%%%%%%%%%%%%%%%%%%%%%%%%%
%\author{
%%%%%%%%%%%%%%%%%%%%%%%%%%%%%%%%%%%%%%%%%%%%%%%%%%%
%Naian Liao\\
%%\thanks{Corresponding author; N. L. is partially supported by NSFC 11701054 grant.}\\
%%%%%%%%%%%%%%%%%%%%%%%%%%%%%%%%%%%%%
%Fachbereich Mathematik, Universit\"at Salzburg\\
%Hellbrunner Str. 34, 5020 Salzburg, Austria\\
%email: {\tt naian.liao@sbg.ac.at}
%}
%%%%%%%%%%%%%%%%%%%%%%%%%%%%%%%%%%%%%%%%%%%%%%%%%%%
\date{}
\maketitle
%%%%%%%%%%%%%%%%%%%%%%%%%%%%%%%%%%%%%%%%%%%%%%%%%%%
\begin{abstract}
We demonstrate a measure theoretical approach to the local regularity of weak supersolutions to elliptic and parabolic equations
in divergence form.
In the first part, we show that weak supersolutions become lower semicontinuous 
after redefinition on a set of measure zero.
The proof relies on a general principle, i.e. the De Giorgi type lemma,
which offers a unified approach for a wide class of elliptic and parabolic equations,
including an anisotropic elliptic equation, the parabolic $p$-Laplace equation,
and the porous medium equation.
In the second part, we shall show that for parabolic problems the lower semicontinuous representative at an instant
can be recovered pointwise from the  ``$\essliminf$'' of past times.
We also show that it can be recovered by the limit of certain integral averages of past times.
The proof hinges on the expansion of positivity for weak supersolutions.
Our results are structural properties of partial differential equations,
independent of any kind of comparison principle.

%\
%
%\noi R\'esum\'e.  
%En employant la approche th\'eorique de la mesure, nous pr\'esentons une \'etude de la r\'egularit\'e locale de supersolutions faibles aux \'equations elliptiques et paraboliques dans la forme divergeante. Dans la premi\`ere partie, nous montrons que les supersolutions faibles deviennent semi-continues inf\'erieurement apr\`es red\'efinition sur un ensemble de mesure z\`ero. La preuve repose sur un principe g\'en\'eral, \`a savoir le lemme de type De Giorgi, qui offre une approche unifi\'ee d'une large classe d'\'equations elliptiques et paraboliques, y compris une \'equation elliptique anisotrope, l'\'equation parabolique associ\'e au $p$-Laplacien et l'\'equation des milieux poreux. Dans une seconde partie, nous \'etablissons que dans le cas des probl\'emes paraboliques, le repr\'esentant semi-continu inf\'erieurement \`a un instant provient en chaque point de la  \guillemotleft$\essliminf$\guillemotright ~ 
%du temps \'ecoul\'e. Nous montrons \'egalement qu'il peut \^etre r\'ecup\'er\'e en limite par une valeur moyenne  sur le pass\'e. La preuve s'appuie sur l'expansion de la positivit\'e des supersolutions  faibles. Nos r\'esultats \'etablissent des propri\'et\'es structurelles d'\'equations aux d\'eriv\'ees partielles, ind\'ependantes de tout type de principe de  comparaison.

%%%%%%%%%%%%%%%%%%%%%%%%%%%%%%%%%%%%%%%%%%%%%%%%%%%
\vskip.2truecm
%%%%%%%%%%%%%%%%%%%%%%%%%%%%%%%%%%%%%%%%%%%%%%%%%%%
\noindent{\bf Mathematics Subject Classification (2020):} 
35K59, 35J70, 35K65, 35B65
\vskip.2truecm
%%%%%%%%%%%%%%%%%%%%%%%%%%%%%%%%%%%%%%%%%%%%%%%%%%%
\noindent{\bf Key Words:} Lower semicontinuity, pointwise behavior, supersolutions, 
degenerate and singluar parabolic equations,
De Giorgi type lemma, expansion of positivity
%Parabolic $p$-laplacian, boundary estimates, elliptic $p$-capacity, continuity, Wiener-type integral
%%%%%%%%%%%%%%%%%%%%%%%%%%%%%%%%%%%%%%%%%%%%%%%%%%%
\end{abstract}
%%%%%%%%%%%%%%%%%%%%%%%%%%%%%%%%%%%%%%%%%%%%%%%%%%%%%%%%%%%%%%%%%%%%%%%
%%%%%%%%%%%%%%%%%%%%%%%%%%%%%%%%%%%%%%%%%%%%%%%%%%%%%%%%%%%%%%%%%%%%%%%
\section{Introduction}%\label{S:intro}
The notion of {\it lower semicontinuity} plays a key role in potential theory.
One important example is in the notion of {\it superhamonic function} introduced by F. Riesz.
Let $\Om$ be an open subset of $\rr^d$ for some $d\ge1$.
A function $u:\Om\to (-\infty,+\infty]$ is called superharmonic if the following is satisfied:
\begin{enumerate}
 \item[(1)] $u$ is lower semicontinuous,% \vspace{-.28cm}
 \item[(2)] $u$ is finite in a dense subset of $\Om$,%\vspace{-.28cm}
 \item[(3)] for every ball $B\Subset \Om$ and every $h\in C(\bar{B})$ that is harmonic in $B$, 
  if $u\ge h$  on $\pl B$ then  $u\ge h$ in $B$.
\end{enumerate}
This is one way of interpreting the mnemonic inequality $\Dl u\le 0$ in $\Om$.
There is yet another way from calculus of variations involving Sobolev spaces.
Indeed, a function $u\in W^{1,2}_{\loc}(\Om)$ is called a weak supersolution to the Laplace equation $\Dl u=0$ if
for any compact set $K\subset\Om$
\[
\int_K Du\cdot D\vp\,\dx\ge0
\]
for all non-negative $\vp\in C^1_o(K)$. %This approach can be traced back to Riemann.
Note carefully that weak supersolutions are Sobolev functions and hence are merely defined almost everywhere,
whereas superharmonic functions are defined pointwise.

A natural question arises on the possible equivalence between the two notions, 
up to a set of measure zero.
While both directions are of interest, in this note we focus on the direction 
\begin{equation}\label{Eq:direction}
\text{ \{weak supersolutions\}} \quad\implies \quad\text{ \{superharmonic functions\}. }
\end{equation}
Incidentally, there is even a third notion of supersolution, i.e. the viscosity one, for which
lower semicontinuity is also a preset requirement.
For their equivalence and relevant references, \cite{Lindqvist} is a good source.

The notion of F. Riesz has also developed the nonlinear analogs and the parabolic counterparts
(cf. \cite{HKM, KL-96, KL-06, KL-08, KLL, KKP-10, Lindqvist, Watson}). Whereas the notion of weak supersolution is quite standard
for elliptic and parabolic equations in divergence form (cf. \cite{DB, DBGV-mono, GT, LU, LSU, Lieberman}).
A similar quest concerning the possible equivalence is certainly in order,
and thus the meaning of the implication in \eqref{Eq:direction} is enriched.
More specifically, we are interested in showing that {\it weak supersolutions
to a wide class of elliptic and parabolic equations possess lower semicontinuous representatives}.
Moreover, this turns out to be a {\it structural property} of partial differential equations in divergence form.
Together with the comparison principle under more stringent structural conditions of the partial differential equation,
which is certainly an issue of independent interest, this would allow us to conclude in this direction.

No doubt that the existence of lower semicontinuous representatives of weak supersolutions 
has been well-known for very general nonlinear elliptic equations such as the $p$-Laplace type $\Dl_p u=0$ 
(cf. \cite[\S~3.62]{HKM} and \cite{MZ, Trud}).
The main tool of the proof lies in a proper weak Harnack inequality for weak supersolutions.
This tool however is in general difficult to obtain, especially in the nonlinear parabolic setting.
Nevertheless, progress was made in \cite{Kuusi} where it was observed that a proper $L^r$ -- $L^{\infty}$ estimate
for weak {\it subsolutions} suffices to obtain a desired representative,
as long as the structure of the partial differential equation permits one to add a constant to
a solution to generate another one.
This observation dispenses with the demanding techniques in proving weak Harnack's inequality
and enables us to handle the parabolic $p$-Laplace type equation $u_t=\Dl_p u$,
and thus gives a new point of view even in the elliptic setting.
On the other hand, it seems not so handy in dealing with the porous medium type equation $u_t=\Dl (|u|^{m-1}u)$,
mainly because adding a constant to a solution does not guarantee another one.
When $m>1$, there was an attempt made for non-negative supersolutions in \cite{AL}.
See also \cite[Theorem~3.4]{BBGS} in this connection. Their approach is a certain adaption of the idea from \cite{Kuusi}.
However, it is unclear whether this approach can be applied in the setting of sign-changing solutions
to the porous medium type equation, let alone doubly nonlinear parabolic equations.
To give an answer is one goal of this note.

In the {\bf first part} of this note (cf. Section~\ref{S:1:1}), we demonstrate a new point of view: {\it the existence of
lower semicontinuous representative for weak supersolutions 
is in fact encoded in the De Giorgi type lemma}.
Loosely speaking, the lemma asserts that if $u$ is bounded below and
if the measure density of the set $\{x\in B_\rho(x_o):\,u(x)>\sig \text{ for some }\sig>0\}$ 
within a ball $B_\rho(x_o)$ of radius $\rho$ and center $x_o$ exceeds a critical number,
then we must have $u>\frac12\sig$ a.e. in $B_{\frac12\rho}(x_o)$. Geometrically, this means that
the minimum of $u$ must be attained in the annulus ``boundary" $B_{\rho}(x_o)\setminus B_{\frac12\rho}(x_o)$ and
the graph of $u$ will not allow any cusp; see Figure~\ref{Fig-1}.
\begin{figure}[t]\label{Fig-1}%{0.25\textwidth}
\centering
\includegraphics[scale=.8]{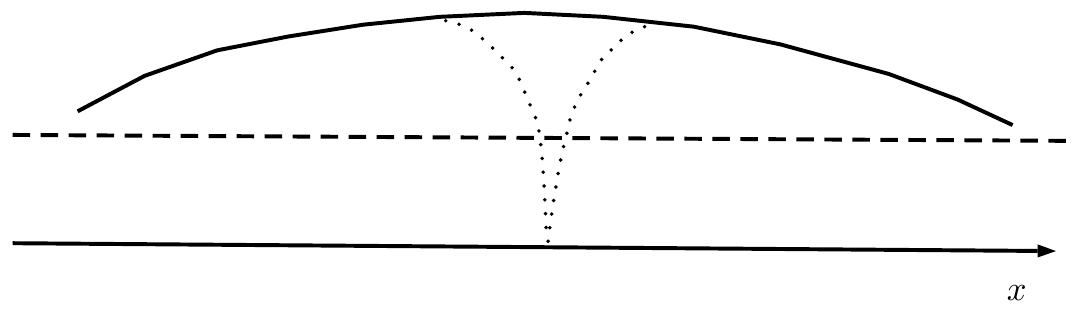}
\caption{No cusp}
\end{figure}
\noi It is obvious that
\[
u_*(x_o):=\lim_{\rho\to0}\essinf_{B_{\rho}(x_o)}u\le \lim_{\rho\to0}\bint_{B_{\rho}(x_o)}u\,\dy=u(x_o)\quad\text{ at every Lebesgue point }x_o.
\]
The lemma implies that the strict inequality cannot hold.
To illustrate the idea, let us suppose the strict inequality were to hold and $u_*(x_o)=0$ for simplicity.
Since $x_o$ is a Lebesgue point, by shrinking $\rho$,
the measure density of the set $\{x\in B_\rho(x_o):\,u(x)>\frac12u(x_o)\}$
would exceed the critical number. Consequently, we would have $u>\frac14 u(x_o)>0$ a.e. in $B_{\frac12\rho}(x_o)$ by the lemma,
which contradicts the definition of $u_*(x_o)=0$.
Therefore $u_*$ is a desired lower semicontinuous representative of $u$.

The gist of the De Giorgi type lemma lies in the ``from-measure-to-uniform" type estimate.
As such it can be derived 
from a proper $L^r$ -- $L^{\infty}$ estimate for the $p$-Laplace type equations (cf. \cite[Lemma~2.1]{Liao}).
Incidentally, the De Giorgi type lemma and the $L^r$ -- $L^{\infty}$ estimate are often referred to
as a local maximum principle in the literature.
However, their connection is not immediate for the porous medium equation.
Switching from the conventional $L^r$ -- $L^{\infty}$ estimate to the De Giorgi type lemma
%since for any $k$ $(k-u)_+$ is a weak subsolution to the same type of equation
%Furthermore, this principle 
offers considerable flexibility and economy, and allows us to treat 
signed supersolutions to the porous medium type equation for all $m>0$ simultaneously.
Hence, it seems that our approach reveals a more general and basic principle than the one in \cite{Kuusi}.
%Furthermore, this principle offers considerable flexibility and allows us to treat 
%signed supersolutions to the porous medium type equation for all $m>0$ simultaneously.
Since we do not impose any sign restriction on supersolutions, as a corollary,  weak {\it subsolutions}
will automatically enjoy {\it upper semicontinuous} representatives.
Therefore in contrast to the regularity theory of weak solutions to elliptic and parabolic equations, which reads

\

\hspace{.3cm}
\fbox{\begin{minipage}{12em}
$u$  is a locally bounded weak solution
\end{minipage}}\quad$\implies$\quad%\hfill
\fbox{\begin{minipage}{12em}
 $u$ has a (H\"older) continuous representative,
\end{minipage}}

\

\noi our investigation can be illustrated as

\

%\hspace{.2cm}
\fbox{\begin{minipage}{13em}
$u$ is a weak super(sub)-solution locally bounded below(above)
\end{minipage}}\quad$\implies$\quad%\hfill
\fbox{\begin{minipage}{13em}
 $u$ has a lower(upper) semicontinuous representative.
\end{minipage}}

\

\noi The local boundedness of super(sub)-solutions may or may not
be inherent in the preset notion of solution. 
This however deserves a separate investigation; see Remark~\ref{Rmk:2:2} and Remark~\ref{Rmk:2:8}.

To streamline the presentation, we will first formulate the so-called {\it property} \eqref{Property-D}
for measurable functions in Section~\ref{S:1:1}. This property \eqref{Property-D} is designed to recapitulate
the De Giorgi type lemmas satisfied by weak supersolutions to a variety of elliptic and parabolic equations
that interest us. Then we show that any locally integrable function with the property \eqref{Property-D}
has a lower semicontinuous representative; see Theorem~\ref{Thm:1:1}. To demonstrate the advantage of our approach,
we handle an anisotropic $p$-Laplace type elliptic equation in Section~\ref{S:anisotropic}
and a doubly nonlinear parabolic equation in Section~\ref{S:doubly}.
To our knowledge, the issue of lower semicontinuous representative for weak supersolutions to these
partial differential equations has never been dealt with before.

The {\bf second part} of this note (cf. Section~\ref{S:ptwise}) is devoted to the pointwise behavior of weak supersolutions
to parabolic equations.
This is a natural continuation of the study in the first part. Indeed, we have shown in the first part that the lower semicontinuous
representative $u_*$ coincides with $u$ at every Lebesgue point.
As is well known, Sobolev functions possess  Lebesgue points {\it almost everywhere} expect a set of ``capacity'' zero 
(cf.  \cite[Chapter 11]{DB-real-analysis} and \cite[Chapter 3]{Ziemer-89}). Thus it is natural to ask if $u_*$ can be recovered by 
the limit of the integral averages of $u$ at {\it every} point if $u$ is a supersolution. 
In the elliptic setting, this is well known (cf. \cite[\S~3.62]{HKM} and \cite{MZ}).
The parabolic case seems not well studied.
Due to the nature of parabolic equations,
one cannot expect to recover the point value of a supersolution at $(x,t)$ 
by the limit of the integral averages in standard Euclidean balls ``centered" at $(x,t)$.
A simple example would be
\begin{equation*}
H(x,t)=\left\{
\begin{array}{cc}
1,\quad t>0,\\
0,\quad t\le0.
\end{array}
\right.
\end{equation*}
One easily verifies that $H_*=H$ is a lower semicontinuous supersolution to the heat equation;
see Section~\ref{S:doubly} for the definition. 
Moreover, we have%\footnote{Compare this with \cite[Theorem~2.2]{Ziemer-88}.}
\begin{itemize}
   \item $\dsty\lim_{\rho\to0}\bint_{-\rho}^{\rho}\bint_{B_{\rho}}|H-c|\,\dx\dt\neq0$\quad for any $c\in\rr$,
   \item $\dsty\lim_{\rho\to0}\bint_{-\rho}^{\rho}\bint_{B_{\rho}}H\,\dx\dt=\tfrac12\neq H_*(0,0)=0$.
\end{itemize}

In the potential theory for the heat equation, relying on explicit representations,
the recovery of a point value can be achieved by certain weighted integral averages over the so-called {\it heat balls},
which are attached to the designated instant from below; see \cite[Theorem~3.59]{Watson}.
%\footnote{
%There is yet another type recovery involving the {\it caloric measure} over parabolic boundaries in \cite[Corollary~3.20]{Watson}.}
Unlikely this will be a structural property we are seeking.
Nevertheless we will show in Proposition~\ref{Lm:Lebesgue-pt} for parabolic equations that
\begin{equation}\label{Eq:parabolic-Lebesgue}
\inf_{\theta>0}\lim_{\rho\to0}\bint_{t-2\theta\rho^p}^{t-\theta\rho^p}\bint_{B_{\rho}(x)}|u_*(x,t)-u(y,s)|\,\dy\d s=0\quad
\text{ for every }(x,t).
\end{equation}
Here $\rho^p$ reflects the natural time scaling of the particular parabolic equation.
Moreover, $\inf_{\theta>0}$ can be attained for non-degenerate equations.
%is a positive parameter that could depend on the specific point $(x,t)$.
This can be viewed as a certain {\it mean value property} of parabolic nature.
We have detected a similar statement as \eqref{Eq:parabolic-Lebesgue} in \cite[Theorem~2.1]{Ziemer-82},
in the context of the porous medium type equation ($m>1$). However, a sign restriction ($u_*<0$)
is imposed.
Like in the elliptic setting, the main tool used in \cite[Theorem~2.1]{Ziemer-82}
is a weak Harnack inequality for supersolutions. Thanks to the recent advances in the theory (cf. \cite{DBGV-acta, DBGV-mono}),
 we will demonstrate in Sections~\ref{S:3:1} -- \ref{S:3:3} that the {\it expansion of positivity} for supersolutions offers a handy
approach to \eqref{Eq:parabolic-Lebesgue}. The same approach can easily be adapted to the elliptic equations.
%The expansion of positivity is an enhanced version of the De Giorgi type lemma and, loosely speaking, says for a function $u\ge0$
%and a number $\sig>0$,
%\[
%|[u>\sig]\cap B_{\rho}(x_o)|\ge\al|B_\rho|\implies u>\eta(\al)\sig\quad\text{ a.e. in }B_{2\rho}(x_o).
%\]
%It in turn can be viewed as a preliminary form of the weak Harnack inequality.
Thus, dispensing with the weak Harnack inequality we have offered a new perspective even in the elliptic setting. 
It is also worth mentioning that the expansion of positivity on the other hand lies
at the heart of any kind of Harnack's inequality (cf. \cite{DBGV-mono}).

%A direct consequence of \eqref{Eq:parabolic-Lebesgue} is 
Another result on the pointwise behavior of $u_*$ is the following important ``$\essliminf$" property
(cf. Theorem~\ref{Thm:pointwise}):
\begin{equation}\label{Eq:ptwise}
u_*(x,t)=\essliminf_{\substack{(y,s)\to(x,t)\\ s<t}}u(y,s)\quad\text{ for every }(x,t).
\end{equation}
Actually we will provide a finer version of \eqref{Eq:ptwise} in Theorem~\ref{Thm:pointwise}; see also Remark~\ref{Rmk:3:2}.
The equations  \eqref{Eq:parabolic-Lebesgue} -- \eqref{Eq:ptwise} reflect a distinctive feature of parabolic equations:
what is to happen in the future will have no influence at the present time.
A similar result as \eqref{Eq:ptwise} has been observed in \cite{KL-06, KL-08} in the context of 
potential theory for the prototype equations (for $p>2$ and $m>1$);
see \cite[Lemma~3.16, Corollary~3.53]{Watson} for the heat equation.
%\footnote{
% For the heat equation, the ``$\essliminf$" in \eqref{Eq:ptwise} can be taken along heat balls;
% see \cite[Corollary~3.53]{Watson}.
% But unlikely this will be a structural property.}
The significance of our result lies in that  \eqref{Eq:ptwise} holds true for a doubly nonlinear equation
which includes the parabolic $p$-Laplace type equation (for all $p>1$)
and the porous medium type equation (for all $m>0$) as special cases, and that
it is a structural property of weak supersolutions to parabolic equations in divergence form,
independent of any kind of comparison principle.
Needless to say, weak subsolutions will automatically be recovered as in \eqref{Eq:ptwise}
replacing ``$\essliminf$" by ``$\esslimsup$".
%
%\begin{center}
%\begin{tabular}{ | m{5em} | m{1cm}| m{1cm} | } 
%\hline
%cell1 dummy text dummy text dummy text& cell2 & cell3 \\ 
%\hline
%cell1 dummy text dummy text dummy text & cell5 & cell6 \\ 
%\hline
%cell7 & cell8 & cell9 \\ 
%\hline
%\end{tabular}
%\end{center}

To summarize, the main goal of this note consists in demonstrating how the measure theoretical approach,
 which was originally invented by De Giorgi and has been developed recently
 in the quasilinear, degenerate parabolic setting (cf.  \cite{DBGV-acta, DBGV-mono}),
can be used to study the local regularity of weak supersolutions to a wide range of elliptic and parabolic equations,
some of which were inaccessible previously.
In my opinion, this approach offers considerable flexibility and more refined insight than the conventional one (cf. Table \ref{Table-1}).
%This effort may pave the way to future investigations on the potential theoretical aspects of these
%partial differential equations.
%Finally we feel appropriate to summarize the problems and the approaches discussed in Table \ref{Table-1}.

\begin{table}[h]\label{Table-1}
%\begin{center}
\centering
\begin{tabular}{p{3.7cm}p{4.6cm}p{3.9cm} p{1cm}|p{1cm}|p{1cm}|p{1cm}|p{1.4cm}|}
\hline
Problems/Approaches&Conventional&Measure theoretical \\ \hline
Lower semicontinuity& $L^r$ -- $L^{\infty}$ estimate& De Giorgi type lemma\\ %\hline
Pointwise behavior& Weak Harnack's inequality&Expansion of positivity\\
\hline
\end{tabular}
\caption{Summary}
%\end{center}
\end{table}
%%%%%%%%%%%%%%%%%%%%%%%%%%%
{\it Acknowledgement.} This research has been funded by 
the FWF--Project P31956--N32 “Doubly nonlinear evolution equations”.
The author is grateful to Ugo Gianazza for carefully reading an early version
and valuable suggestions. 
%Thanks also go to Anne-Laure Argentin, Verena B\"ogelein and Gerhard Racher for offering help on the R\'esum\'e.
%%%%%%%%%%%%%%%%%%%%%%%%%%%
\section{Lower semicontinuity}\label{S:1:1}
Let $\Om$ be an open set in $\rr^d$ for some integer $d\ge1$ and
$u$ be  a measurable function that is locally, essentially bounded below  in $\Om$.
For positive numbers $\rho$ and $\al_i$ with $i=1,2,\dots,d$, we define a cube in $\rr^d$ centered at the origin by
$$\mathcal{Q}_{\rho}=\prod_{i=1}^d(-\rho^{\al_i},\rho^{\al_i}).$$
If the center is shifted to some $y\in\rr^d$, then we write $\mathcal{Q}_{\rho}(y)$.
The positive parameters $\{\al_i:\,i=1,\dots,d\}$ will be chosen to reflect the scaling property of the 
particular partial differential equation.

Suppose $\mathcal{Q}_{\rho}(y)\subset \Om$ and introduce the real numbers $a$, $c$, $M$, and $\mu^-$ that satisfy
\begin{equation}\label{Eq:a-M-mu}
a, \,c\in(0,1),\quad M>0,\quad%\mu^+\ge\essup_{K_\rho(y)}u,\quad
\mu^-\le\essinf_{\mathcal{Q}_\rho(y)}u.
\end{equation}
We say $u$ satisfies the property \eqref{Property-D} in the sense that
\begin{equation}\tag{$\mathfrak{D}$}\label{Property-D}
\left.
	\begin{minipage}[c][2cm]{0.8\textwidth}
there exists a constant
$\nu\in(0,1)$ depending only on 
 $a$,  $M$, $\mu^-$ and other data, but independent of $\rho$, such that 
% if
%$[u<H]\cap K_\varrho(y)|
%	\le
%	\nu| K_\varrho|$
%then 
$$
[u\le \mu^- + M]\cap \mathcal{Q}_\varrho(y)|
	\le
	\nu| \mathcal{Q}_\varrho|
	\implies
u\ge \mu^- + a M \,\text{ a.e. in } \mathcal{Q}_{c \varrho}(y).
$$
	\end{minipage}
\right\}
\end{equation}
\begin{remark}\upshape
The constant $\nu$ could be a conglomerate of various quantities, such as  $a$,  $M$, $\mu^-$
and other data from the particular partial differential equation,
which will be specified in Section~\ref{S:anisotropic} and Section~\ref{S:doubly}.
The generosity of $\nu$'s dependence provides us with flexibility in adapting 
the property \eqref{Property-D} to different circumstances.
However the independence of $\rho$ is crucial.
\end{remark}

%%%%%%%%%
The {\it lower semicontinuous regularization} of $u$ is defined by
\begin{equation}\label{Eq:lsc-reg}
u_*(x):=\essliminf_{y\to x}u(y)=\lim_{r\to0}\essinf_{\mathcal{Q}_r(x)}u\quad\text{ for }x\in\Om.
\end{equation}
Note that because we have assumed $u$ is locally, essentially bounded below,
$u_*$  is well defined at every point of the domain.
It is not hard to see the so-defined $u_*$ is lower semicontinuous. %{\color{red}proof?}

Assume in addition that $u\in L^1_{\loc}(\Om)$ and denote the set of Lebesgue points of $u$ by 
\begin{equation}\label{Eq:Lebesgue-pt}
\mathcal{F}:=\left\{x\in \Om: |u(x)|<\infty,\quad \lim_{r\to0}\bint_{\mathcal{Q}_r(x)}|u(x)-u(y)|\,\dy=0\right\}.
\end{equation}
By the Lebesgue Differentiation Theorem \cite[Chapter~5, Section~11]{DB-real-analysis}, we have $|\mathcal{F}|=|\Om|$. %{\color{red}proof?}
 %it suffices to show $u=\hat{u}$ in $\mathcal{F}$.
\begin{theorem}\label{Thm:1:1}
Let the measurable function $u$ be locally integrable and locally, essentially bounded below in $\Om$.
Suppose that the property \eqref{Property-D} holds for $u$. Then $u(x)=u_*(x)$ for all $x\in\mathcal{F}$.
In particular, $u_*$ is a lower semicontinuous representative of $u$ in $\Om$.
\end{theorem}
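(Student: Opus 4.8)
\medskip
\noindent\textbf{Proof strategy.}
The plan is to fix an arbitrary Lebesgue point $x_o\in\mathcal{F}$ and prove the two inequalities $u_*(x_o)\le u(x_o)$ and $u_*(x_o)\ge u(x_o)$ separately; only the second will invoke \eqref{Property-D}. The first is immediate: for every admissible radius $r$ we have $\essinf_{\mathcal{Q}_r(x_o)}u\le\bint_{\mathcal{Q}_r(x_o)}u\,\dy$, and letting $r\to0$ the left-hand side tends to $u_*(x_o)$ by the definition \eqref{Eq:lsc-reg}, while the right-hand side tends to $u(x_o)$ since $\big|\bint_{\mathcal{Q}_r(x_o)}u\,\dy-u(x_o)\big|\le\bint_{\mathcal{Q}_r(x_o)}|u-u(x_o)|\,\dy\to0$ by $x_o\in\mathcal{F}$.

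For the reverse inequality I would argue by contradiction, supposing $u_*(x_o)<u(x_o)$ (both finite: $u(x_o)$ because $x_o\in\mathcal{F}$, and $u_*(x_o)$ because $u$ is locally essentially bounded below while $u_*(x_o)\le u(x_o)$), and then make the heuristic of the Introduction precise. The organizing subtlety is the order of the quantifiers in \eqref{Property-D}: the constant $\nu$ there may depend on $a$, $M$, $\mu^-$ but \emph{not} on $\rho$, so $a$, $M$, $\mu^-$ must be chosen first and $\rho$ shrunk only afterwards. Hence I would fix $a\in(0,1)$ arbitrarily and set
\[
M:=\tfrac12\big(u(x_o)-u_*(x_o)\big)>0,\qquad \varep:=\tfrac a4\big(u(x_o)-u_*(x_o)\big)>0.
\]
Since $\rho\mapsto\essinf_{\mathcal{Q}_\rho(x_o)}u$ is non-decreasing as $\rho\downarrow0$ with limit $u_*(x_o)$, one cannot take $\mu^-=u_*(x_o)$ outright; instead, choose $\rho_o>0$ with $\mathcal{Q}_{\rho_o}(x_o)\subset\Om$ and $\essinf_{\mathcal{Q}_{\rho_o}(x_o)}u>u_*(x_o)-\varep$, and put $\mu^-:=\essinf_{\mathcal{Q}_{\rho_o}(x_o)}u$. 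Then $u_*(x_o)-\varep<\mu^-\le u_*(x_o)$, and $\mu^-\le\essinf_{\mathcal{Q}_\rho(x_o)}u$ for every $\rho\le\rho_o$, so \eqref{Eq:a-M-mu} is satisfied; with $a$, $M$, $\mu^-$ now fixed, \eqref{Property-D} provides $\nu\in(0,1)$ independent of $\rho$.

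It then remains to verify, for all small $\rho$, the measure–density hypothesis of \eqref{Property-D}. Because $\mu^-+M\le u_*(x_o)+\tfrac12\big(u(x_o)-u_*(x_o)\big)=\tfrac12\big(u_*(x_o)+u(x_o)\big)<u(x_o)$, on $[u\le\mu^-+M]\cap\mathcal{Q}_\rho(x_o)$ one has $|u-u(x_o)|\ge\gamma:=u(x_o)-(\mu^-+M)>0$, so Chebyshev's inequality gives
\[
\frac{\big|[u\le\mu^-+M]\cap\mathcal{Q}_\rho(x_o)\big|}{|\mathcal{Q}_\rho(x_o)|}\le\frac1\gamma\bint_{\mathcal{Q}_\rho(x_o)}|u-u(x_o)|\,\dy\longrightarrow 0\qquad(\rho\to0),
\]
using $x_o\in\mathcal{F}$. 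Thus for some $\rho_1\in(0,\rho_o]$ this density is smaller than $\nu$ whenever $\rho\le\rho_1$, and \eqref{Property-D} yields $u\ge\mu^-+aM$ a.e.\ in $\mathcal{Q}_{c\rho}(x_o)$, whence $u_*(x_o)\ge\essinf_{\mathcal{Q}_{c\rho}(x_o)}u\ge\mu^-+aM$. But $\mu^-+aM>(u_*(x_o)-\varep)+aM=u_*(x_o)+\tfrac a4\big(u(x_o)-u_*(x_o)\big)>u_*(x_o)$, a contradiction. Therefore $u_*(x_o)=u(x_o)$ for every $x_o\in\mathcal{F}$; since $|\mathcal{F}|=|\Om|$ this gives $u_*=u$ a.e., and $u_*$ is lower semicontinuous by the remark following \eqref{Eq:lsc-reg}. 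I expect the only real obstacle to be exactly this bookkeeping: keeping $\nu$ independent of $\rho$ while simultaneously ensuring that the gain level $\mu^-+aM$ strictly overshoots $u_*(x_o)$, which forces $\mu^-$ to approximate $u_*(x_o)$ from below within a fixed fraction of $a\big(u(x_o)-u_*(x_o)\big)$.
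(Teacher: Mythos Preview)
Your proof is correct and follows essentially the same approach as the paper's: both show $u_*\le u$ on $\mathcal{F}$ trivially, then argue the reverse by contradiction, fixing the parameters $a,M,\mu^-$ before shrinking $\rho$ so that $\nu$ stays $\rho$-independent, and using the Lebesgue-point property (via a Chebyshev estimate) to trigger \eqref{Property-D}. The only cosmetic difference is the order of choices---the paper fixes a reference cube first, sets $\mu^-$ as its infimum, then chooses $M$ and finally $a$ so that $\mu^-+aM>u_*(x_o)$, whereas you fix $a$ and $M$ first and then approximate $u_*(x_o)$ from below by $\mu^-$ via a small $\varepsilon$; both bookkeeping schemes achieve the same overshoot $\mu^-+aM>u_*(x_o)$.
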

\begin{proof}
%We will only show $u=u_*$ as the other case can be worked out analogously.
It is obvious that $u_*(x)\le u(x)$ for all $x\in\mathcal{F}$ since
\[
u_*(x)=\lim_{r\to0}\essinf_{\mathcal{Q}_{r}(x)}u\le\lim_{r\to0}\bint_{\mathcal{Q}_r(x)}u\,\dy= u(x).
\]
 We will use the property \eqref{Property-D} to show the reverse inequality.
To this end, we pick $x_o\in\mathcal{F}$ and suppose to the contrary that
$u_*(x_o)<u(x_o)$.

Fix $R>0$ such that $\mathcal{Q}_R(x_o)\subset \Om$. Let  $\mu^-$ and $M$ satisfy 
$$\essinf_{\mathcal{Q}_R(x_o)}u=:\mu^-\le u_*(x_o)<\mu^-+M<u(x_o).$$
The quantities $u_*(x_o)$, $\mu^-$ and $M$ being fixed, 
we may choose $a\in(0,1)$ such that
\begin{equation}\label{Eq:choice-a}
\mu^-+aM>u_*(x_o),\quad\text{ i.e.}\quad \frac{u_*(x_o)-\mu^-}{M}<a<1.
\end{equation}
Such an $a$ being fixed in \eqref{Eq:choice-a}, we determine $\nu$
depending only on $a$, $\mu^-$, $M$ and other data but independent of $\rho$, according to the property \eqref{Property-D}. 

Next observe that there must be some $\rho\in(0,R)$ such that
\begin{equation}\label{Eq:critical-mass}
|[u<\mu^-+M]\cap \mathcal{Q}_\rho(x_o)|<\nu|\mathcal{Q}_\rho|.
\end{equation}
Otherwise, we would have arrived at 
\begin{align*}
\int_{\mathcal{Q}_\rho(x_o)}|u(x_o)-u(y)|\,\dy&\ge\int_{[u<\mu^-+M]\cap \mathcal{Q}_\rho(x_o)}\big[u(x_o)-(\mu^-+M)\big]\,\dy\\
&\ge\nu\big[u(x_o)-(\mu^-+M)\big]|\mathcal{Q}_\rho|
\end{align*}
for all $\rho\in(0,R)$, which is
a contradiction to the fact that $x_o\in\mathcal{F}$
since $\nu$ is independent of $\rho$.

By the property \eqref{Property-D}, the measure information \eqref{Eq:critical-mass} and the
choice of $a$ in \eqref{Eq:choice-a} imply that %for some $c\in(0,1)$
\[
u\ge\mu^-+aM>u_*(x_o)\quad\text{ a.e. in }\mathcal{Q}_{c\rho}(x_o).
\]
This however contradicts the definition of $u_*(x_o)$.
As a result, we must have $u_*(x)\ge u(x)$ for all $x\in\mathcal{F}$.
\end{proof}
%%%
\begin{remark}\label{Rmk:2:1}\upshape
The above proof mainly follows the argument already presented in the introduction.
However it is possible to give a more direct argument.
Indeed, for $\rho>0$ and $\mathcal{Q}_\rho(x_o)\subset\Om$,  let us introduce the non-negative quantity
\[
\om_{x_o}(\rho):=\bint_{\mathcal{Q}_\rho(x_o)}u\,\dy-\essinf_{\mathcal{Q}_\rho(x_o)}u.
\]
One could deem $\om_{x_o}$ as an analog of ``modulus of continuity" of $u$ at $x_o$. Suppose $x_o\in\mathcal{F}$ and set
\[
\varep_{x_o}(\rho):=\bint_{\mathcal{Q}_\rho(x_o)}[u(x_o)-u(y)]_+\,\dy.
\] 
A closer inspection of the above proof
reveals that if $u$ satisfies the property \eqref{Property-D}, then 
$\om_{x_o}$ can be quantified by $\varep_{x_o}$,
provided more explicit dependence of $\nu$ is known. %i.e. $\om_{x_o}\lesssim\varep_{x_o}/\nu$.
In fact, for $\dl>0$ we may first estimate
\[
\dl\frac{|[u\le u(x_o)-\dl]\cap \mathcal{Q}_\rho(x_o)|}{|\mathcal{Q}_\rho|}\le\bint_{\mathcal{Q}_\rho(x_o)}[u(x_o)-u(y)]_+\,\dy=\varep_{x_o}(\rho).
\]
Suppose the constant $\nu$ in the property \eqref{Property-D} has
been fixed for the moment. We may choose $\dl\nu=\varep_{x_o}(\rho)$, such that
\[
\frac{|[u\le u(x_o)-\dl]\cap \mathcal{Q}_\rho(x_o)|}{|\mathcal{Q}_\rho|}\le\nu.
\]
Consequently, the property \eqref{Property-D} with 
the pairing $\mu^-+M=u(x_o)-\dl$ and $\mu^-+aM=u(x_o)-2\dl$ yields
\[
\essinf_{\mathcal{Q}_{\frac12\rho}(x_o)}u\ge u(x_o)-2\dl\quad\text{ with }\dl=(1-a)M,
\]
which in turn gives that
\begin{align*}
\om_{x_o}(\tfrac12\rho)&=\bint_{\mathcal{Q}_{\frac12\rho}(x_o)}u\,\dy-u(x_o)+u(x_o)-\essinf_{\mathcal{Q}_{\frac12\rho}(x_o)}u\\
&\le\varep_{x_o}(\tfrac{1}2\rho)+2\dl.%\le(1+\tfrac2{\nu})\varep_{x_o}(\rho).
\end{align*}

On the other hand, the constant $\nu$ may depend on $\dl$ through $M$ (letting $a=\tfrac12$) 
according to the property \eqref{Property-D}.
Nevertheless, given amenable dependence of $\nu$ on $M$, for instance $\nu\approx M^\al$ with some $\al>0$, 
one could then represent $\dl$ by $\varep_{x_o}(\rho)$
through the relation $\dl\nu=\varep_{x_o}(\rho)$.
As a result, sending $\rho\to0$, the above estimate gives a more direct proof of Theorem~\ref{Thm:1:1}.
We shall see this is indeed the case when we consider particular partial differential equations
in the following Sections~\ref{S:anisotropic} -- \ref{S:doubly}.
\end{remark}
\subsection{An anisotropic $p$-Laplace equation}\label{S:anisotropic}
In this section, we work in an open set $E\subset\rr^N$ with $N\ge1$.
%For $\rho>0$, the cube $\mathcal{Q}_\rho(y)$ centered at $y$ is 
%the standard one with side length $2\rho$, i.e. $\al_i=1$, $i=1,\dots, N$.
We consider the anisotropic $p$-Laplace equation
\begin{equation}\label{Eq:general}
\sum_{i=1}^{N}D_{x_i}A_i(x,u,Du)=0\quad\text{ weakly in }E,
\end{equation}
where the functions $A_i(x,u,\xi): E\times\rr\times\rr^{N}\to\rr$ are 
measurable with respect to $x \in E$ for all $(u,\xi)\in \rr\times\rn$
and continuous with respect to $(u,\xi)$ for a.e.~$x\in E$, 
and subject to the structure conditions
\begin{equation}\label{Eq:general-struc}
\begin{cases}
A_i(x,u,\xi)\cdot \xi_{i}\ge C_{o}|\xi_{i}|^{p_i},\\
|A_i(x,u,\xi)|\le C_{1}|\xi_{i}|^{p_i-1},
\end{cases}
\end{equation}
for some constants $p_i>1$, $C_{o}>0$ and $C_{1}>0$.
If all indices $p_i$'s are equal, \eqref{Eq:general} -- \eqref{Eq:general-struc} reduce to the standard $p$-Laplace
type equation.
In this section, we refer to the set of parameters $\{N,\,C_o,\,C_1,\,p_1,\dots,p_N\}$ as the {\it data}.
We will use $\gm$ as a generic positive constant that can be determined in terms of the data.

For a multi-index ${\bf p}=\{p_1,\dots,p_N\}$, let
\begin{equation*}
\left\{
\begin{array}{ll}
W^{1,{\bf p}}(E)=\{u\in L^1(E): u_{x_i}\in L^{p_i}(E),\, i=1,\dots,N\}\\[5pt]
W_o^{1,{\bf p}}(E)=W^{1,{\bf p}}(E)\cap W^{1,1}_o(E).
\end{array}\right.
\end{equation*}

A function $u\in W^{1,{\bf p}}_{\loc}(E)$ is called a local, weak supersolution to \eqref{Eq:general} -- \eqref{Eq:general-struc}
if for all compact set $K\subset E$
\[
\int_K \bl{A}(x,u, Du)\cdot D\vp\,\dx\ge0
\]
for all non-negative $\vp\in C^1_o(K)$. 
The notion of local, weak subsolution is defined by requiring $-u$ to be a local, weak supersolution.
A function that is both a weak supersolution and a weak subsolution is called a weak solution.

For $\rho>0$ we construct the cubes centered at the origin
\begin{equation*}
\mathcal{Q}_{\rho}(\theta)=\prod_{j=1}^N(-\rho_j,\rho_j),%\quad Q_{\sig\rho}(\theta)=\prod_{j=1}^N(-\sig\rho_j,\sig\rho_j),
\quad\rho_j=\theta\rho^{\frac{1}{p_j}}
\end{equation*}
for some parameter $\theta>0$. We denote the congruent cubes
centered at $y$ by $y+\mathcal{Q}_{\rho}(\theta)$.
Suppose $y+\mathcal{Q}_{\rho}(\theta)\subset E$ and $u$ is locally, essentially bounded below in $E$.
Let the numbers $a$, $\mu^-$ and $M$ be defined as in \eqref{Eq:a-M-mu}
with $\mathcal{Q}_\rho(y)$ replaced by $y+\mathcal{Q}_\rho(\theta)$.
The following De Giorgi type lemma can be retrieved from \cite{DBGV-16}.
\begin{lemma}\label{DG:anisotropic}
Let $u$ be a local, weak supersolution to \eqref{Eq:general} -- \eqref{Eq:general-struc} in $E$.
Assume that $u$ is locally, essentially bounded below in $E$.
There exists $\nu\in(0,1)$ depending only on $a$, $M$, $\theta$ and the data, such that
if
\[
|[u\le\mu^-+M]\cap [y+\mathcal{Q}_{\rho}(\theta)]|\le\nu|\mathcal{Q}_\rho(\theta)|,
\]
 then
\[
u\ge\mu^-+aM\quad\text{ a.e. in }y+\mathcal{Q}_{\frac12\rho}(\theta).
\]
Moreover, for some $c\in(0,1)$ depending only on the data, the constant $\nu$ has the form
\[
\nu=c (1-a)^N\bigg(\frac{M}{\theta}\bigg)^N.
\]
\end{lemma}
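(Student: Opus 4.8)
The statement is a De Giorgi type lemma, and I would prove it by the classical De Giorgi iteration adapted to the anisotropic cubes $\mathcal{Q}_\rho(\theta)$; the energy estimate for truncations of a supersolution and the anisotropic Sobolev inequality that drive the iteration are exactly those available in \cite{DBGV-16}, so the point of the proof is to organise them so that the threshold comes out in the form $c(1-a)^N(M/\theta)^N$. After a translation assume $y=0$.

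\textbf{Normalisation.} Introduce $v:=(u-\mu^-)/M$, so that $v\ge0$ a.e.\ in $\mathcal{Q}_\rho(\theta)$ by \eqref{Eq:a-M-mu}. Then $v$ is a local weak supersolution of an anisotropic equation of the same structural type, but the structure constants in \eqref{Eq:general-struc} are rescaled by powers of $M$; the hypothesis becomes $|[v\le1]\cap\mathcal{Q}_\rho(\theta)|\le\nu|\mathcal{Q}_\rho(\theta)|$ and the conclusion to be proved is $v\ge a$ a.e.\ in $\mathcal{Q}_{\frac12\rho}(\theta)$. It is this rescaling of the coefficients, together with the fixed anisotropic geometry of $\mathcal{Q}_\rho(\theta)$ (whose $j$-th side has length $2\theta\rho^{1/p_j}$ and whose measure is $2^N\theta^N\rho^{\sum_j 1/p_j}$), that feeds the combination $M/\theta$ into the final constant.

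\textbf{Iteration.} For $n\ge0$ set
\[
k_n:=a+\frac{1-a}{2^{\,n}},\qquad \rho_n:=\frac{\rho}{2}+\frac{\rho}{2^{\,n+1}},\qquad \mathcal{Q}_n:=\mathcal{Q}_{\rho_n}(\theta),
\]
and $w_n:=(k_n-v)_+$, $A_n:=[v<k_n]\cap\mathcal{Q}_n$, $Y_n:=|A_n|/|\mathcal{Q}_\rho(\theta)|$, so that $k_0=1$, $k_\infty=a$, $\mathcal{Q}_0=\mathcal{Q}_\rho(\theta)$ and $\mathcal{Q}_\infty=\mathcal{Q}_{\frac12\rho}(\theta)$. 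Choose cutoffs $\zeta_n$, equal to $1$ on $\mathcal{Q}_{n+1}$, vanishing off $\mathcal{Q}_n$, with $|D_{x_j}\zeta_n|\le\gm\,2^{\,n}(\theta\rho^{1/p_j})^{-1}$. Testing the weak formulation for $v$ with $\varphi=\zeta_n^{\,p^+}w_n\ge0$, $p^+:=\max_jp_j$, and using \eqref{Eq:general-struc} and Young's inequality gives an anisotropic Caccioppoli inequality
\[
\sum_{j=1}^N\int_{\mathcal{Q}_n}\big|D_{x_j}(\zeta_nw_n)\big|^{p_j}\,\dx\le\gm\sum_{j=1}^N\Big(\frac{2^{\,n}}{\theta\rho^{1/p_j}}\Big)^{p_j}\int_{A_n}w_n^{p_j}\,\dx.
\]
Inserting the anisotropic Sobolev inequality of \cite{DBGV-16} for $\zeta_nw_n$ on $\mathcal{Q}_n$ (which embeds $W_0^{1,\mathbf p}(\mathcal{Q}_n)$ into $L^{q}(\mathcal{Q}_n)$ for a suitable $q$ beyond the $p_j$'s), then using $w_n\le k_n\le1$ on $A_n$ and $w_n\ge(1-a)2^{-(n+1)}$ on $A_{n+1}$, Hölder's inequality, and the explicit value of $|\mathcal{Q}_\rho(\theta)|$ so that all powers of $\rho$ cancel, one is led to a recursion
\[
Y_{n+1}\le\gm\,b^{\,n}\,\mathcal{C}(M,\theta,a)\,Y_n^{1+\al},
\]
with $b>1$ and $\al>0$ depending only on the data, and with $\mathcal{C}(M,\theta,a)$ a product of powers of $M$, $\theta$ and $(1-a)$ whose exponents are fixed by $\al$ and by the sharp exponent in the anisotropic Sobolev inequality.

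\textbf{Conclusion and the obstacle.} By the standard fast geometric convergence lemma there is $\varep_o\in(0,1)$, depending only on $\gm$, $b$, $\al$, such that $Y_n\to0$ whenever $Y_0\le\varep_o\,\mathcal{C}(M,\theta,a)^{-1/\al}$; choosing $\nu$ as this right-hand side yields $|[v<a]\cap\mathcal{Q}_{\frac12\rho}(\theta)|=0$, i.e.\ $u\ge\mu^-+aM$ a.e.\ in $\mathcal{Q}_{\frac12\rho}(\theta)$, and the bookkeeping of the exponents produces precisely $\nu=c(1-a)^N(M/\theta)^N$. The only non-routine part of the argument is the \emph{anisotropy}: the cutoff has direction-dependent gradients $2^n/(\theta\rho^{1/p_j})$, the Caccioppoli inequality is a sum over $j$ of terms carrying different exponents $p_j$, and closing the iteration requires the correct anisotropic Sobolev inequality; keeping the $\rho$-dependence matched across all $N$ directions so that it cancels — it must, since $\nu$ is $\rho$-free — and extracting exactly the power $N$ on $M/\theta$ and on $(1-a)$ is where the care is needed. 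Everything else — the choice of $k_n$ and $\rho_n$, the absorption step via Young's inequality, and the fast-convergence lemma — is the textbook De Giorgi scheme.
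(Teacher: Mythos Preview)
The paper does not actually prove this lemma; it simply states that it ``can be retrieved from \cite{DBGV-16}'' and moves on. Your outline is the standard De Giorgi iteration that one finds in that reference, so in spirit you are doing exactly what the paper defers to.

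One small comment on the normalisation step: dividing by $M$ in the anisotropic setting rescales the structure constants direction by direction (the $i$-th coercivity constant becomes $C_oM^{p_i-1}$), so the resulting equation for $v$ no longer has a single pair $(C_o,C_1)$ but a whole family indexed by $i$. This is harmless --- the Caccioppoli inequality still closes term by term --- but it makes the bookkeeping of the final exponent on $M/\theta$ slightly less transparent than if you had worked directly with the levels $k_n=\mu^-+aM+(1-a)M/2^n$ as the paper does in the parabolic analogue (Lemma~\ref{DG:p-q}). Either route lands on $\nu=c(1-a)^N(M/\theta)^N$; the direct one just keeps the $M$'s and $\theta$'s visible throughout rather than hiding them in modified structure constants.
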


The parameter $\theta$ offers leeway in the method of {\it intrinsic scaling} 
from more advanced regularity theory (cf. \cite{DB, DBGV-mono,DBGV-16}),
which we will not evoke here. For our purpose, $\theta=1$ would be enough. Indeed,
Lemma~\ref{DG:anisotropic} implies that the property \eqref{Property-D} is satisfied by 
supersolutions to \eqref{Eq:general} -- \eqref{Eq:general-struc} with cubes $y+\mathcal{Q}_\rho\equiv y+\mathcal{Q}_\rho(1)$.
The lower semicontinuous regularization of $u$ and the set $\mathcal{F}$ of Lebesgue points
can be adapted from \eqref{Eq:lsc-reg} and \eqref{Eq:Lebesgue-pt} in Section~\ref{S:1:1} to the current setting after obvious changes. 
Then as a result of Lemma~\ref{DG:anisotropic} and Theorem~\ref{Thm:1:1}, we have the following.
%Therefore Theorem~\ref{Thm:1:1} can be applied to
% local, weak supersolutions to \eqref{Eq:general} -- \eqref{Eq:general-struc}
%that is locally, essentially bounded below in $E$. 
\begin{theorem}
Let $u$ be a local, weak supersolution to \eqref{Eq:general} -- \eqref{Eq:general-struc} in $E$.
Assume that $u$ is locally, essentially bounded below in $E$.
Then $u_*(x)=u(x)$ for all $x\in\mathcal{F}$.
In particular, $u_*$ is a lower semicontinuous representative of $u$ in $E$.
\end{theorem}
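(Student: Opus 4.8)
The plan is to derive the conclusion directly from the already-established abstract result, Theorem~\ref{Thm:1:1}, by verifying that its single hypothesis---namely that $u$ enjoys the property \eqref{Property-D}---holds for any local, weak supersolution to \eqref{Eq:general}--\eqref{Eq:general-struc} that is locally, essentially bounded below. The other two hypotheses of Theorem~\ref{Thm:1:1} are either assumed outright (local essential boundedness below) or come for free: since $u\in W^{1,\mathbf{p}}_{\loc}(E)\subset L^1_{\loc}(E)$, local integrability is automatic, so the set $\mathcal{F}$ of Lebesgue points is defined and has full measure by the Lebesgue Differentiation Theorem. Hence the theorem is really a corollary, and the body of the proof amounts to translating Lemma~\ref{DG:anisotropic} into the language of Section~\ref{S:1:1}.

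First I would fix the parameter $\theta=1$ in Lemma~\ref{DG:anisotropic}, so that the anisotropic cubes $y+\mathcal{Q}_\rho(1)$ become exactly the cubes $\mathcal{Q}_\rho(y)=\prod_{i=1}^d(-\rho^{\alpha_i},\rho^{\alpha_i})$ of Section~\ref{S:1:1} with the identification $d=N$ and $\alpha_i=1/p_i$. With $\theta$ fixed, Lemma~\ref{DG:anisotropic} provides, for each admissible choice of $a\in(0,1)$, $M>0$ and $\mu^-\le\essinf_{\mathcal{Q}_\rho(y)}u$, a constant
\[
\nu=c\,(1-a)^N\Big(\tfrac{M}{\theta}\Big)^N = c\,(1-a)^N M^N \in(0,1),
\]
depending only on $a$, $M$ and the data, such that the measure-density smallness $|[u\le\mu^-+M]\cap(y+\mathcal{Q}_\rho)|\le\nu|\mathcal{Q}_\rho|$ forces $u\ge\mu^-+aM$ a.e. in $y+\mathcal{Q}_{\frac12\rho}$. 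This is precisely the implication in \eqref{Property-D}, with $c$ in \eqref{Property-D} taken to be $\tfrac12$; crucially, the constant $\nu$ is manifestly independent of $\rho$, which is the one requirement Remark~\ref{Rmk:2:1} flags as essential. I would also remark that the lower semicontinuous regularization $u_*$ and the Lebesgue set $\mathcal{F}$ are to be understood with these anisotropic cubes, which changes nothing in the arguments of Section~\ref{S:1:1} since those arguments only used the shrinking-to-a-point property and the Lebesgue differentiation theorem, both valid for the family $\{\mathcal{Q}_r(x)\}$.

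With \eqref{Property-D} verified, Theorem~\ref{Thm:1:1} applies verbatim and yields $u(x)=u_*(x)$ for every $x\in\mathcal{F}$, hence that $u_*$ is a lower semicontinuous representative of $u$ on $E$. I do not expect a genuine obstacle here: the real content has been front-loaded into Lemma~\ref{DG:anisotropic} (itself retrieved from \cite{DBGV-16}) and into the abstract Theorem~\ref{Thm:1:1}. The only point demanding a little care is bookkeeping: checking that the explicit form of $\nu$ in Lemma~\ref{DG:anisotropic}, once $\theta$ is frozen, fits the dependency pattern allowed in \eqref{Property-D} (dependence on $a$, $M$, $\mu^-$ and data, independence of $\rho$), and noting---as in Remark~\ref{Rmk:2:1}---that the power-type dependence $\nu\approx M^N$ is in fact amenable enough that one could alternatively run the more quantitative argument producing a modulus-of-continuity-type estimate for $u$ at each Lebesgue point; but for the stated theorem the soft invocation of Theorem~\ref{Thm:1:1} suffices.
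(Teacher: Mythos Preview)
Your proposal is correct and follows exactly the route the paper takes: the theorem is stated as an immediate consequence of Lemma~\ref{DG:anisotropic} (with $\theta=1$, so that the anisotropic cubes match $\mathcal{Q}_\rho(y)$ with $\alpha_i=1/p_i$) together with Theorem~\ref{Thm:1:1}, and the paper gives no further argument beyond this. Your additional bookkeeping---checking $u\in L^1_{\loc}$, identifying $c=\tfrac12$ in \eqref{Property-D}, and noting the $\rho$-independence of $\nu$---is the appropriate verification that the hypotheses of Theorem~\ref{Thm:1:1} are met.
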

\begin{remark}\label{Rmk:2:2}\upshape
The local boundedness of weak super(sub)-solutions to \eqref{Eq:general} -- \eqref{Eq:general-struc}
is an independent issue.
%is in fact inherent in the notion of solutions provided the indices $\{p_1,\dots,p_N\}$ are not too far apart.
We refer to \cite{DBGV-16} for details and further references.
\end{remark}
%%%
\begin{remark}\upshape
Although it is not directly related to our concern in this note,
we mention that the H\"older regularity of locally bounded weak solutions to 
 \eqref{Eq:general} -- \eqref{Eq:general-struc} is still an open problem (cf. \cite{DBGV-16}).
\end{remark}
%%%%%%%%%%%%%%%%%%%%%%%%%%%%
\subsection{A doubly nonlinear parabolic equation}\label{S:doubly}
Let $E$ be an open subset of $\rn$ and $E_T:=E\times(0,T)$ 
for some $T>0$.
In this section we consider the following doubly nonlinear parabolic equation
\begin{equation}  \label{Eq:p-q}
	\partial_t\big(|u|^{q-1}u\big)-\dvg\bl{A}(x,t,u, Du) = 0\quad \mbox{ weakly in $ E_T$}
\end{equation}
where $q>0$ and the function $\bl{A}(x,t,u,\xi)\colon E_T\times\rr^{N+1}\to\rn$ is only assumed to be
measurable with respect to $(x, t) \in E_T$ for all $(u,\xi)\in \rr\times\rn$,
continuous with respect to $(u,\xi)$ for a.e.~$(x,t)\in E_T$,
and subject to the structure conditions
\begin{equation}\label{Eq:1:2p}
	\left\{
	\begin{array}{c}
		\bl{A}(x,t,u,\xi)\cdot \xi\ge C_o|\xi|^p \\[5pt]
		|\bl{A}(x,t,u,\xi)|\le C_1|\xi|^{p-1}%
	\end{array}
	\right .
	\qquad \mbox{for a.e.~$(x,t)\in E_T$, $\forall\,(u,\xi)\in\rr\times\rn$,}
\end{equation}
where $C_o$ and $C_1$ are given positive constants, and $p>1$.
In this section, we will refer to the set of parameters $\{N,\,p,\,q,\,C_o,\,C_1\}$ as the {\it data}.
We will use $\gm$ as a generic positive constant that can be determined apriori in terms of the data.
%\begin{remark}
%Note that we have deliberately excluded the top boundary $E\times\{T\}$,
%which is usually included in the regularity theory of weak solutions.
%\end{remark}
\begin{remark}\label{Rmk:p-q}\upshape
In particular, when $q=1$ and $p>1$, \eqref{Eq:p-q} -- \eqref{Eq:1:2p} reduces to the parabolic $p$-Laplace type equation;
when $p=2$ and $q>0$, it is equivalent to the porous medium type equation.
To see the latter point, we only need to introduce a new function $v=|u|^{q-1}u$ and let $m=1/q$,
then \eqref{Eq:p-q} -- \eqref{Eq:1:2p} with $p=2$ represents the porous medium type equation about $v$.
\end{remark}
A function
\begin{equation*}  %\label{Eq:1:3p}
	u\in L^{q}_{\loc}(E_T)\cap L^p_{\loc}\big(0,T; W^{1,p}_{\loc}(E)\big)% C_{\loc}\big(0,T;L^{q+1}_{\loc}(E)\big)\cap 
\end{equation*}
is a local, weak supersolution to \eqref{Eq:p-q} -- \eqref{Eq:1:2p}, if for every compact set $K\subset E$ and every sub-interval
$[t_1,t_2]\subset (0,T)$
\begin{equation*}  %\label{Eq:1:4p}
	%\int_K |u|^{q-1}u\z \,\dx\bigg|_{t_1}^{t_2}
	%+
	\int_{t_1}^{t_2}\int_{K} \left[-|u|^{q-1}u\z_t+\bl{A}(x,t,u,Du)\cdot D\z\right]\dx\dt
	\ge0
\end{equation*}
for all non-negative test functions $\z\in C_o^1\big(K\times (t_1,t_2)\big)$.
%\begin{equation*}
%\z\in W^{1,p}_{\loc}\big(0,T;L^p(K)\big)\cap L^p_{\loc}\big(0,T;W_o^{1,p}(K)%
%\big).
%\end{equation*}
The notion of local, weak subsolution is defined by requiring $-u$ to be a local, weak supersolution.
A function that is both a weak supersolution and a weak subsolution is called a weak solution.
\begin{remark}\label{Rmk:notion-sol}\upshape
If the test function $\z$ is required to vanish only on the lateral boundary $\pl K\times(t_1,t_2)$,
we may replace $\z$ by $\vp_h(t)\z(x,t)$ for some small $h>0$, where $\vp_h$ is $1$ on $[t_1+h,t_2-h]$, vanishes outside $(t_1,t_2)$
and is linearly interpolated otherwise. Then after sending $h\to0$ we arrive at
\[
\int_K |u|^{q-1}u\z \,\dx\bigg|_{t_1}^{t_2}+\int_{t_1}^{t_2}\int_{K} \left[-|u|^{q-1}u\z_t+\bl{A}(x,t,u,Du)\cdot D\z\right]\dx\dt
	\ge0.
\]
Here the extra boundary terms are interpreted by the following limits:
\begin{align*}
&\int_{K\times\{t_1\}} |u|^{q-1}u\z \,\dx:=\lim_{h\to0}\bint_{t_1}^{t_1+h}\int_{K}|u|^{q-1}u\z\,\dx\dt,\\
&\int_{K\times\{t_2\}} |u|^{q-1}u\z \,\dx:=\lim_{h\to0}\bint_{t_2-h}^{t_2}\int_{K}|u|^{q-1}u\z\,\dx\dt.
\end{align*}
Such limits virtually hold for a.e. $t_1,\,t_2\in(0,T)$ in view of $u\in L^q_{\loc}(E_T)$.
Whenever it holds $t_1$ or $t_2$ will be called a {\it Lebesgue instant}.
\end{remark}

Let $K_\rho(y)\subset E$ be a cube centered at $y$,
with side length $2\rho$. Lest confusion arises,
we point out that $K_\rho(y)$ here is actually the same as $\mathcal{Q}_\rho(y)$ defined previously,
with all $\al_i=1$; whereas the letter $Q$ is reserved for  space-time cylinders in the parabolic setting.
When $y=0$ we simply write $K_\rho$.
For numbers $w$ and $k$, introduce the quantity
\begin{equation*}
	\mathfrak g(w,k):= q\int_{w}^{k}|s|^{q-1}(s-k)_-\,\d s\quad\text{ where }(s-k)_-:=\max\{k-s,0\}.
\end{equation*}
We first present the following energy estimate.
The proof is quite standard and can be given as in \cite[Proposition~3.1]{BDL}.
\begin{proposition}\label{Prop:2:1}
	Let $u$ be a  local, weak supersolution to \eqref{Eq:p-q} -- \eqref{Eq:1:2p} in $E_T$.
	There exists a constant $ \gm (C_o,C_1,p)>0$, such that
 	for all cylinders $Q_{R,S}:=K_R(y)\times (s-S,s)\Subset E_T$,
 	every $k\in\rr$, and every non-negative, piecewise smooth cutoff function
 	$\z$ vanishing on $\pl K_{R}(y)\times (s-S,s)$,  there holds
\begin{align*}
	\essup_{s-S<t<s}&\int_{K_R(y)\times\{t\}}	
	\z^p\mathfrak g (u,k)\,\dx
	+
	\iint_{Q_{R,S}}\z^p|D(u-k)_-|^p\,\dx\dt\\
	&\le
	\gm\iint_{Q_{R,S}}
		\Big[
		(u-k)^{p}_-|D\z|^p + \mathfrak g (u,k)|\z_t|\z^{p-1}
		\Big]
		\,\dx\dt\\
	&\phantom{\le\,}
	+\int_{K_R(y)\times \{s-S\}} \z^p \mathfrak g (u,k)\,\dx.
\end{align*}
\end{proposition}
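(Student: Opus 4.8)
The plan is to carry out the standard Caccioppoli (energy) argument for the truncation $(u-k)_-$, legitimizing the formal computation by a Steklov regularization in time, exactly along the lines of \cite[Proposition~3.1]{BDL}. Since $u$ is only known to lie in $L^q_{\loc}(E_T)\cap L^p_{\loc}\big(0,T;W^{1,p}_{\loc}(E)\big)$, the function $(u-k)_-$ cannot be inserted directly into the weak formulation, so I would first replace $|u|^{q-1}u$ and $\bl A(x,t,u,Du)$ by their Steklov averages $[\,\cdot\,]_h$; the weak formulation then takes the pointwise-in-time form
\[
\int_{K_R(y)\times\{t\}}\Big\{\partial_t\big[|u|^{q-1}u\big]_h\,\phi+\big[\bl A(x,t,u,Du)\big]_h\cdot D\phi\Big\}\,\dx\ge0
\]
for a.e.~$t$ and every non-negative $\phi\in W^{1,p}_o(K_R(y))$. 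I would then test with the admissible choice $\phi=\z^p(u-k)_-$ (it vanishes on $\partial K_R(y)$ because $\z$ does), integrate over $t\in(t_1,t_2)$ for Lebesgue instants $s-S\le t_1<t_2\le s$, and eventually send $h\to0$.

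For the parabolic contribution the decisive algebraic identity is $\partial_w\mathfrak g(w,k)=-q|w|^{q-1}(w-k)_-$, which gives, formally,
\[
\partial_t\big(|u|^{q-1}u\big)\,\z^p(u-k)_-=-\partial_t\big(\z^p\mathfrak g(u,k)\big)+p\z^{p-1}\z_t\,\mathfrak g(u,k).
\]
I would make the passage $h\to0$ rigorous through the usual lemma on Steklov averages tested against the monotone, continuous map $w\mapsto\mathfrak g(w,k)$, using the a.e.~convergence of the averages and local integrability; this reproduces precisely the slice term $\int_{K_R(y)\times\{t_2\}}\z^p\mathfrak g(u,k)\,\dx$, the term $-\int_{K_R(y)\times\{t_1\}}\z^p\mathfrak g(u,k)\,\dx$, and the contribution of $p\z^{p-1}\z_t\,\mathfrak g(u,k)$. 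The boundary-in-time integrals are interpreted as the limits in Remark~\ref{Rmk:notion-sol}, which is exactly why $t_1,t_2$ are taken to be Lebesgue instants; the value at the level $s-S$ is then recovered by letting $t_1\downarrow s-S$.

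For the diffusive contribution I would expand $D\big(\z^p(u-k)_-\big)=\z^pD(u-k)_-+p\z^{p-1}(u-k)_-D\z$; on the set $[u<k]$ one has $D(u-k)_-=-Du$, so the first inequality in \eqref{Eq:1:2p} yields $\z^p\bl A\cdot D(u-k)_-\le-C_o\z^p|D(u-k)_-|^p$, while the second inequality together with Young's inequality bounds $\big|p\z^{p-1}(u-k)_-\bl A\cdot D\z\big|$ by $\tfrac12C_o\z^p|D(u-k)_-|^p+\gamma(C_o,C_1,p)(u-k)_-^p|D\z|^p$ (after first passing to the limit in $[\bl A]_h$). Inserting both contributions into the regularized inequality, letting $h\to0$, absorbing $\tfrac12C_o\z^p|D(u-k)_-|^p$ into the left-hand side, taking the essential supremum over $t_2\in(s-S,s)$ in the slice term and letting $t_2\uparrow s$ in the gradient term (each right-hand side being nondecreasing in $t_2$ and hence dominated by its value on $Q_{R,S}$), and relabelling constants, I would arrive at the asserted energy estimate. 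The only genuinely delicate point is the treatment of the time-derivative term: the Steklov-limit lemma must accommodate the possibly singular ($q<1$) or degenerate ($q>1$), sign-changing nonlinearity $|u|^{q-1}u$, and the time-boundary terms have to be managed via the Lebesgue-instant device of Remark~\ref{Rmk:notion-sol}; everything else is a routine use of the structure conditions \eqref{Eq:1:2p} and Young's inequality.
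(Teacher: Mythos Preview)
Your proposal is correct and follows precisely the approach the paper itself indicates: the paper does not give an independent proof but refers to \cite[Proposition~3.1]{BDL}, and your outline---Steklov averaging, testing with $\z^p(u-k)_-$, the identity $\partial_w\mathfrak g(w,k)=-q|w|^{q-1}(w-k)_-$ for the parabolic term, and the structure conditions \eqref{Eq:1:2p} plus Young's inequality for the diffusive term---is exactly that argument, with the time-boundary terms handled via the Lebesgue-instant interpretation of Remark~\ref{Rmk:notion-sol}.
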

\begin{remark}\label{Rmk:2:6}\upshape
The last space integral at the instant $s-S$ is interpreted as in Remark~\ref{Rmk:notion-sol}.
In particular, the above energy estimate implies improved integrability of $u$ immediately in the sense that
$u\in L^{\infty}_{\loc}\big(0,T; L^{q+1}_{\loc}(E)\big)$. Attention is called to that
 in general we do {\it not} have $u\in C_{\loc}\big(0,T; L^{q+1}_{\loc}(E)\big)$ for weak supersolutions, which holds true
for weak solutions (cf. \cite{DB, DBGV-mono, LSU, Lieberman}).
\end{remark}
%Let $K_\rho(y)\subset\rn$ be the cube with side length $2\rho$ and center $y$.
% When $y=0$ we simply write $K_\rho$.
For $(y,s)\in \rr^{N+1}$, we define the cylinders 
scaled by a positive parameter $\theta$:
\begin{equation}\label{cylinders}
\begin{cases}
\text{centered cylinders: }&(y,s)+Q_{\rho}(\theta)=
K_{\rho}(y)\times(s-\theta\rho^p, s+\theta\rho^p),\\
\text{forward cylinders: }&(y,s)+Q^+_{\rho}(\theta)=
K_{\rho}(y)\times(s, s+\theta\rho^p),\\
\text{backward cylinders: }&(y,s)+Q^-_{\rho}(\theta)=
K_{\rho}(y)\times(s-\theta\rho^p,s).
\end{cases}
\end{equation}
When $\theta=1$ or $(y,s)=(0,0)$ we omit them from the notation.
Let the numbers $a$, $\mu^-$ and $M$ be defined as in \eqref{Eq:a-M-mu}
with $\mathcal{Q}_\rho(y)$ replaced by $(y,s)+Q_{\rho}(\theta)$.
We use the above energy estimate to show the following De Giorgi type lemma.
\begin{lemma}\label{DG:p-q}
Let $u$ be a  local, weak supersolution to \eqref{Eq:p-q} -- \eqref{Eq:1:2p} in $E_T$.
Assume that $u$ is locally, essentially bounded below in $E_T$.
 There exists a constant
 $\nu\in(0,1)$ depending only on 
$a$, $M$, $\theta$, $\mu^-$ and the data, such that if
\[
|[u\le \mu^- + M]\cap [(y,s)+Q^-_\varrho(\theta)]|
	\le
	\nu | Q^-_\varrho(\theta)|
\]
then 
\[
u\ge \mu^- + a M \quad\text{ a.e. in } (y,s)+Q^-_{\frac34 \varrho}(\theta).
%\pm(\mu^{\pm}-u)\ge\frac{M}2\quad\text{ a.e. in }(y,s)+Q_{\frac{\rho}2}(\theta).
\]
%Moreover,
%\[
%\nu=\gm\frac{M^{m-1}{\theta}}
%{\big(1+M^{m-1}{\theta}\big)^\frac{N+2}{2}}
%\]
%for some $\gm>0$ depending only on the data.
\end{lemma}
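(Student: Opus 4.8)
The plan is to run a De Giorgi iteration on a decreasing sequence of nested backward cylinders shrinking to $(y,s)+Q^-_{\frac34\rho}(\theta)$, using the energy estimate in Proposition~\ref{Prop:2:1} as the engine. Without loss of generality we may take $(y,s)=(0,0)$ and write $Q^-_\rho\equiv Q^-_\rho(\theta)$. Introduce the levels
\[
k_n=\mu^-+aM+\frac{(1-a)M}{2^n},\qquad n=0,1,2,\dots,
\]
so that $k_0=\mu^-+M$ and $k_n\downarrow\mu^-+aM$; the radii
\[
\rho_n=\tfrac34\rho+\frac{\rho}{2^{n+2}},\qquad \rho_n\downarrow\tfrac34\rho,\quad \rho_0=\rho;
\]
the cylinders $Q_n=K_{\rho_n}\times(-\theta\rho_n^p,0)$; and cutoff functions $\z_n$ supported in $Q_n$, equal to $1$ on $Q_{n+1}$, with $|D\z_n|\le \gm 2^n/\rho$ and $|\partial_t\z_n|\le \gm 2^{np}/(\theta\rho^p)$. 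The goal is to show that the sequence of scaled measures
\[
Y_n:=\frac{|[u\le k_n]\cap Q_n|}{|Q_n|}
\]
satisfies a recursive inequality $Y_{n+1}\le C\,b^n Y_n^{1+\beta}$ for constants $C,b>1$ and $\beta>0$ depending only on the data; then the fast geometric convergence lemma (see \cite{DB, DBGV-mono}) gives $Y_n\to0$ provided $Y_0$ is below an explicit threshold $\nu$, which yields $u\ge\mu^-+aM$ a.e.\ on $Q^-_{\frac34\rho}$.

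The key steps, in order. First, apply Proposition~\ref{Prop:2:1} on $Q_n$ with $k=k_n$ and cutoff $\z_n$. Since $u$ is bounded below by $\mu^-$, on the set $[u\le k_n]$ we have $\mu^-\le u\le k_n\le \mu^-+M$, so $\mathfrak g(u,k_n)$ is comparable to $(u-k_n)_-^2$ times a factor controlled by $|\mu^-|+M$ raised to the power $q-1$ (for $q\ge1$ it is bounded above; for $0<q<1$ one bounds $|s|^{q-1}$ from above by $(\,\mathrm{dist}\text{ to }0)^{q-1}$, and here the relevant constant enters through $\mu^-$ and $M$, which is exactly why $\nu$ is allowed to depend on $\mu^-$ and $M$). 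This converts the energy inequality into a bound
\[
\essup_{-\theta\rho_n^p<t<0}\int_{K_{\rho_n}\times\{t\}}\z_n^p(u-k_n)_-^2\,\dx
+\iint_{Q_n}\z_n^p|D(u-k_n)_-|^p\,\dx\dt
\le \gm\frac{2^{np}}{\rho^p}\Big(1+\tfrac1\theta\Big)(k_n-k_{n+1})^{\max\{2,p\}}|A_n|,
\]
where $A_n:=[u\le k_n]\cap Q_n$; note the boundary term at the initial time $-\theta\rho_n^p$ is handled by choosing $\z_n$ to vanish there as well (the cylinders are backward, so we take $\z_n$ compactly supported in $t$ too), and the Lebesgue-instant subtlety of Remark~\ref{Rmk:notion-sol} is only needed if one insists on $\z_n\ne0$ at the bottom, which we avoid. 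Second, interpolate: the parabolic Sobolev embedding (Gagliardo--Nirenberg) applied to $v_n:=\z_n(u-k_{n+1})_-$ combines the sup-in-time $L^2$ control and the space $L^p$ gradient control into an $L^{p\frac{N+2}{N}}$-type bound (or, when $p<2$, an $L^{p+\frac{2p}{N}}$ bound), giving $\iint_{Q_n}|v_n|^{p(1+2/N)}\lesssim (\text{RHS above})\times(\text{RHS above})^{2/N}$ in the standard way. Third, observe that on $A_{n+1}$ one has $(u-k_{n+1})_-\ge k_{n+1}-k_{n+2}=(1-a)M/2^{n+2}$, so
\[
|A_{n+1}|\le \Big(\frac{2^{n+2}}{(1-a)M}\Big)^{p(1+2/N)}\iint_{Q_n}|v_n|^{p(1+2/N)}\,\dx\dt,
\]
and assembling the three displays produces the recursion for $Y_n$ with $\beta=\frac{2}{N}\cdot\frac{1}{1+2/N}=\frac{2}{N+2}$ after dividing by $|Q_n|\simeq|Q_\rho|$; the powers of $(1-a)M$ and the factor $(1+1/\theta)$ collect into the threshold, producing $\nu$ of the claimed qualitative form (comparable, up to data constants, to a power of $(1-a)M/\theta$, consistent with Lemma~\ref{DG:anisotropic}). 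Finally invoke the fast geometric convergence lemma to conclude $Y_\infty=0$.

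The main obstacle is the bookkeeping around $\mathfrak g(u,k)$ when $0<q<1$: unlike the $p$-Laplace case $q=1$ where $\mathfrak g(u,k)=\tfrac12(u-k)_-^2$ exactly, here $\mathfrak g(u,k_n)=q\int_{k_n}^{u}|s|^{q-1}(s-k_n)_-\,\d s$ must be pinched between constant multiples of $(u-k_n)_-^2$, and those constants degenerate as the interval of integration approaches $0$. The resolution is precisely that $u\ge\mu^-$ on the cylinder, so $|s|\ge$ something controlled by $\mu^-$ (when $\mu^-$ and the level are on the same side of $0$) or, in the genuinely sign-changing situation where the interval straddles $0$, one splits $\int_{k_n}^{u}=\int_{k_n}^{0}+\int_0^u$ and bounds each piece, absorbing the resulting constants into the data-plus-$(\mu^-,M)$ dependence of $\gm$ and hence of $\nu$. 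This is the only place where the full generality $q>0$ (rather than $q\ge1$) costs anything, and it is exactly why the statement permits $\nu$ to depend on $\mu^-$ and $M$ and not merely on $a$, $\theta$, and the data. Everything else is the textbook De Giorgi scheme adapted to the intrinsic time scaling $\theta\rho^p$.
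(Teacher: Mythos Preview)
Your scheme is the same De~Giorgi iteration the paper runs, and the overall architecture (nested cylinders, levels $k_n\downarrow\mu^-+aM$, energy estimate, parabolic Sobolev, fast geometric convergence) is correct. There is, however, a genuine gap in the step where you pass from the $\mathfrak g(u,k_n)$ appearing in Proposition~\ref{Prop:2:1} to the bare $(u-k_n)_-^2$ on the \emph{left-hand side}. You assert that $\mathfrak g(u,k_n)$ is comparable to $(u-k_n)_-^2$ with constants depending only on $|\mu^-|+M$ and the data, and you flag $0<q<1$ as the delicate range. In fact the problematic direction is the \emph{lower} bound $\mathfrak g(u,k_n)\ge c\,(u-k_n)_-^2$ when $q>1$ and the levels straddle $0$: if, say, $\mu^-+aM=0$ so that $k_n=(1-a)M/2^n\downarrow0$, then taking $u=-k_n$ gives $\mathfrak g(u,k_n)=2k_n^{q+1}$ while $(u-k_n)_-^2=4k_n^2$, and the ratio $k_n^{q-1}/2\to0$ as $n\to\infty$. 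So no $n$-independent comparison constant exists, and your displayed energy inequality with bare $(u-k_n)_-^2$ on the left cannot hold with uniform $\gm$.

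The paper resolves this by invoking a technical lemma (Lemma~2.2 of \cite{BDL}) to replace $\mathfrak g$ by the weighted quantity $(|u|+|k|)^{q-1}(u-\tilde k)_-^2$, using an auxiliary intermediate level $\tilde k_n=\tfrac12(k_n+k_{n+1})$. On the right-hand side the weight is bounded via $(|u|+|k_n|)^{q-1}\le L^{q-1}$ when $q\ge1$ (with $L=\max\{|\mu^-|,|\mu^-+M|\}$), and via the triangle inequality $(|u|+|k_n|)^{q-1}\le (u-k_n)_-^{q-1}$ when $0<q\le1$, giving $M^{q+1}$. On the left-hand side, restricted to $[u\le\tilde k_n]$, one has $|u|+|k_n|\ge k_n-\tilde k_n=(1-a)M/2^{n+2}$, so the weight is bounded below by $\min\{L^{q-1},M^{q-1}\}/2^{q(n+3)}$; this $n$-dependent factor is then absorbed into the geometric constant $b^n$ of the recursion. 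Your proposal can be repaired along exactly these lines, but as written the claimed uniform comparability is false and the recursion does not close. (Two minor points: the right-hand side should carry $M^p$ and $L^{q-1}M^2$ or $M^{q+1}$, not $(k_n-k_{n+1})^{\max\{2,p\}}$; and the paper obtains the improvement exponent $1/(N+2)$ rather than $2/(N+2)$.)
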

\begin{proof}
Assume $(y,s)=(0,0)$. 
%We will show the case of super solutions only as the other case is similar. 
In order to employ the energy estimate in Proposition~\ref{Prop:2:1}, 
we notice first that 
%due to Lemma~\ref{lem:g} we have
%\begin{equation*}
%	\mathfrak g_-(u,k)
%	\le
%	\boldsymbol\gamma \big(|u|+|k|\big)^{q-1}(u-k)_-^2
%	\le
%	\boldsymbol\gamma \big(|u|+|k|\big)^{q}(u-k)_-
%\end{equation*}
%and 
for $\tilde k<k$ there holds $(u-k)_-\ge (u-\tilde k)_-$. Therefore, by the technical Lemma~2.2 in \cite{BDL},
 the energy estimate yields
 \begin{equation}\label{Eq:energy}
\begin{aligned}
	&\essup_{-\theta\varrho^p<t<0}
	\int_{K_\varrho}\z^p\big(|u|+|k|\big)^{q-1}(u-\tilde{k})_-^2\,\dx
	+
	\iint_{Q^-_\varrho(\theta)}\z^p|D(u-\tilde{k})_-|^p\,\dx\dt\\
	&\le
	\gm\iint_{Q^-_\varrho(\theta)}(u-k)^{p}_-|D\z|^p\,\dx\dt
	+
	\gm \iint_{Q^-_\varrho(\theta)}\big(|u|+|k|\big)^{q-1} (u-k)_-^2|\z_t|\,\dx\dt\\
	&=:I_1+I_2,
\end{aligned}
\end{equation}
for any non-negative piecewise smooth cutoff function $\zeta$ vanishing on the parabolic boundary of $Q^-_\varrho(\theta)$.
In order to use this energy estimate \eqref{Eq:energy}, we set
\begin{align*}%\label{choices:k_n}
	\left\{
	\begin{array}{c}
	%\dsty M=H-\mu^-,\\[5pt]
\dsty k_n=\mu^-+aM+\frac{(1-a)M}{2^{n}},\quad \tilde{k}_n=\frac{k_n+k_{n+1}}2,\\[5pt]
\dsty \rho_n=\frac{3\rho}4+\frac{\rho}{2^{n+2}},\quad\tilde{\rho}_n=\frac{\rho_n+\rho_{n+1}}2,\\[5pt]
\dsty K_n=K_{\rho_n}, \quad \widetilde{K}_n=K_{\tilde{\rho}_n},\\[5pt]
\dsty Q_n=K_n\times(-\theta\rho_n^p,0),\quad\widetilde{Q}_n=\widetilde{K}_n\times(-\theta\tilde{\rho}_n^p,0).
\end{array}
	\right.
\end{align*}
Introduce the test function $\z$ vanishing on the parabolic boundary of $Q_{n}$ and
equal to the identity in $\widetilde{Q}_{n}$, such that
\begin{equation*}
|D\z|\le\gm\frac{2^n}{\rho}\quad\text{ and }\quad |\z_t|\le\gm\frac{2^{pn}}{\theta\rho^p}.
\end{equation*}
%Since
%\[\be'(s)=m^{\frac1m-1}|s|^{\frac1m-1},\]
%it is straightforward to check that
%\[
%\max\{\be'(k\pm(v-k)_{\pm}),\,\be'(k)\}\le m^{\frac1m-1}\om^{\frac1m-1}.
%\]
In this setting, we treat the two terms on the right-hand side of the energy estimate \eqref{Eq:energy} as follows.
For the first term, we estimate
\[
I_1\le\gm \frac{2^{pn}}{\varrho^p}M^{p}|A_n|,
\]
where
\begin{equation*}
	A_n=[u<k_n]\cap Q_n.
\end{equation*}
For the second term, \underline{when $q\ge1$}, we note that $\mu^-\le u\le k_n\le \mu^-+M$ on $A_n$ and estimate
\begin{align*}
I_2&\le\gm \frac{2^{pn}}{\theta\rho^p}
	\iint_{Q_n}\big(|u|+|k_n|\big)^{q-1} (u-k_n)^2_- \,\dx\dt \\
	&\le\gm \frac{2^{pn}}{\theta\rho^p} L^{q-1}M^{2}|A_n|,
\end{align*}
where $L:=\max\{| \mu^-|, |\mu^-+M|\}$.
\underline{When $0<q\le 1$}, we use the triangle inequality $|u|+|k_n|\ge(u-k_n)_-$ to estimate
\begin{align*}
I_2&\le\gm \frac{2^{pn}}{\theta\rho^p}
	\iint_{Q_n} (u-k_n)^{q+1}_- \,\dx\dt \\
	&\le\gm \frac{2^{pn}}{\theta\rho^p} M^{q+1}|A_n|.
\end{align*}
As a result, we may obtain that
\begin{align*}
	&\essup_{-\theta\tilde{\varrho}_n^p<t<0}
	\int_{\widetilde{K}_n} \big(|u|+|k_n|\big)^{q-1}(u-\tilde{k}_n)_-^2\,\dx
	+
	\iint_{\widetilde{Q}_n}|D(u-\tilde{k}_n)_-|^p \,\dx\dt\\
%	&\qquad\le
%	\boldsymbol\gm\frac{2^{pn}}{\rho^p}
%	\iint_{Q_n}(u-k_n)^{p}_- \,\dx\dt
%	+
%	\boldsymbol\gm\frac{2^{pn}}{\theta\rho^p}
%	\iint_{Q_n}\big(|u|+|k_n|\big)^{q} (u-k_n)_- \,\dx\dt \\
	&\qquad\le
	\gm \frac{2^{pn}}{\varrho^p}M^{p}\left(1+\frac{\max\{L^{q-1},M^{q-1}\} M^{2-p}}{\theta}\right)|A_n|,
\end{align*}
where $\gm$ depends on the data.

On the other hand, we recall $L=\max\{| \mu^-|, |\mu^-+M|\}$, so that $u\le\tilde k_n$ implies $|u|+|k_n|\le 2L$ and $|u|+|k_n|\ge k_n-u\ge k_n-\tilde k_n=(1-a)2^{-(n+2)}M$. Inserting this in the above energy estimate, we find that 
\begin{equation*}%\label{Eq:sample}
\begin{aligned}
	\frac{(1-a)\min\{L^{q-1},M^{q-1}\}}{2^{q(n+3)}} &\essup_{-\theta\tilde{\varrho}_n^p<t<0}
	\int_{\widetilde{K}_n} (u-\tilde{k}_n)_-^2\,\dx
	+
	\iint_{\widetilde{Q}_n}|D(u-\tilde{k}_n)_-|^p \,\dx\dt\\
	&\le
	\gm \frac{2^{pn}}{\varrho^p}M^{p}\left(1+\frac{\max\{L^{q-1},M^{q-1}\}M^{2-p}}{\theta}\right)|A_n|.
\end{aligned}
\end{equation*}
Now setting $0\le\phi\le1$ to be a cutoff function which vanishes on the parabolic boundary of $\widetilde{Q}_n$
and equals the identity in $Q_{n+1}$, an application of the H\"older inequality  and the Sobolev imbedding
\cite[Chapter I, Proposition~3.1]{DB} gives that
\begin{align*}
	\frac{(1-a)M}{2^{n+3}}
	|A_{n+1}|
	&\le 
	\iint_{\widetilde{Q}_n}\big(u-\tilde{k}_n\big)_-\phi\,\dx\dt\\
	&\le
	\bigg[\iint_{\widetilde{Q}_n}\big[\big(u-\tilde{k}_n\big)_-\phi\big]^{p\frac{N+2}{N}}
	\,\dx\dt\bigg]^{\frac{N}{p(N+2)}}|A_n|^{1-\frac{N}{p(N+2)}}\\
	&\le\gm
	\bigg[\iint_{\widetilde{Q}_n}\big|D\big[(u-\tilde{k}_n)_-\phi\big]\big|^p\,
	\dx\dt\bigg]^{\frac{N}{p(N+2)}}\\
	&\quad\ 
	\times\bigg[\essup_{-\theta\tilde{\varrho}_n^p<t<0}
	\int_{\widetilde{K}_n}\big(u-\tilde{k}_n\big)^{2}_-\,\dx\bigg]^{\frac{1}{N+2}}
	 |A_n|^{1-\frac{N}{p(N+2)}}\\
	&\le 
	\gm 
	\bigg[\frac{2^{pn}}{\varrho^p}M^p\left(1+\frac{\max\{L^{q-1},M^{q-1}\}M^{2-p}}{\theta}\right)\bigg]^{\frac{N+p}{p(N+2)}}\\
	&\quad\times\bigg(\frac{(1-a)^{-1}2^{p(n+3)}}{\min\{L^{q-1},M^{q-1}\}}\bigg)^{\frac{1}{N+2}}
	|A_n|^{1+\frac{1}{N+2}}.
%	&=
%	\boldsymbol\gm
%	\frac{2^{\frac{(2p+N)n}{N+2}}}{\rho^\frac{N+p}{N+2}}
%	M |A_n|^{1+\frac{1}{N+2}}.
\end{align*}
In the last line we used the above energy estimate.
In terms of $ Y_n=|A_n|/|Q_n|$, this can be rewritten as
\begin{equation*}
\begin{aligned}
	 Y_{n+1}
	\le
	&\frac{\gm b^n}{(1-a)^{\frac{N+3}{N+2}}} \left(1+\frac{\max\{L^{q-1},M^{q-1}\}M^{2-p}}{\theta}\right)\\
	&\quad\times\left(\frac{\theta}{M^{2-p}\min\{L^{q-1},M^{q-1}\}}\right)^{\frac1{N+2}} Y_n^{1+\frac{1}{N+2}},
\end{aligned}
\end{equation*}
for positive constants $\gm$ and 
$b$ depending only on the data. %and with $\boldsymbol b\equiv 2^\frac{2(N+p+1)}{N+2}$.
Hence, by \cite[Chapter I, Lemma~4.1]{DB}, 
there exists
a positive constant $\nu$ of the form
\begin{align*}
\nu=&\frac{(1-a)^{N+3}}{\gm}\left(\frac{M^{2-p}\min\{L^{q-1},M^{q-1}\}}{\theta}\right)\\
&\quad\times\left(1+\frac{\max\{L^{q-1},M^{q-1}\}M^{2-p}}{\theta}\right)^{-(N+2)},
\end{align*}
 such that
$Y_n\to0$ if we require that $Y_o\le \nu$.
%which is the same as
%assuming
%\begin{equation*}
%	|A_o|=\big|\big\{u<k_o\big\}\cap Q_o\big|
%	= 
%	\Big|\Big\{u<H\Big\}\cap Q_\varrho(\theta)\Big|
%	\le
%	\nu\big| Q_\varrho(\theta)\big|.
%\end{equation*}
%Since $\boldsymbol Y_n\to 0$ in the limit $n\to\infty$ we have
%\begin{equation*}
%	\Big|\Big\{u<\boldsymbol\mu^-+(1-a) M \Big\}\cap Q_{\frac12 \varrho}(\theta)\Big|
%	= 
%	0.
%\end{equation*}
%This concludes the proof of the lemma.
\end{proof}
%%%%%
\begin{remark}\upshape
When $q=1$, Lemma~\ref{DG:p-q} recovers Lemma~3.1 in Chapter~3 of \cite{DBGV-mono};
when $p=2$, it recovers simultaneously  Lemma~7.1 and  Lemma~10.1  in Chapter~3 of \cite{DBGV-mono}.
\end{remark}

Like in Section~\ref{S:anisotropic}, 
the parameter $\theta$ appears in the more advanced regularity theory of intrinsic scaling
(cf. \cite{BDL, DB, DBGV-mono}), which we will not evoke here.
For our purpose, $\theta=1$ is enough.
%Indeed, for $\rho>0$ let us denote the cylinder centered at $(y,s)$ by
%\[
%(y,s)+Q_\rho=K_\rho(y)\times(s-\rho^p,s+\rho^p)\subset E_T.
%\]
%Lemma~\ref{DG:p-q} implies that the property \eqref{Property-D} is satisfied by 
%supersolutions to \eqref{Eq:p-q} -- \eqref{Eq:1:2p} with centered cylinders.
The only effort needed is to rephrase Lemma~\ref{DG:p-q} using centered cylinders.
Indeed, we still assume $(y,s)=(0,0)$ for simplicity and take $\theta=1$. Lemma~\ref{DG:p-q} implies
 there is $\nu\in(0,1)$ depending only on 
$a$, $M$, $\mu^-$ and the data, such that if
\[
|[u\le\mu^-+M]\cap Q_\varrho|
	\le
	\nu | Q_\varrho|
\]
then 
\[
u\ge\mu^-+a M \quad\text{ a.e. in } Q_{c\varrho}
%\pm(\mu^{\pm}-u)\ge\frac{M}2\quad\text{ a.e. in }(y,s)+Q_{\frac{\rho}2}(\theta).
\]
for some $c\in(0,1)$ depending only on $p$. See Figure~\ref{Fig-2}. %Figure~\ref{Fig-Q-1}
\begin{figure}[t]\label{Fig-2}
\centering
\includegraphics[scale=1]{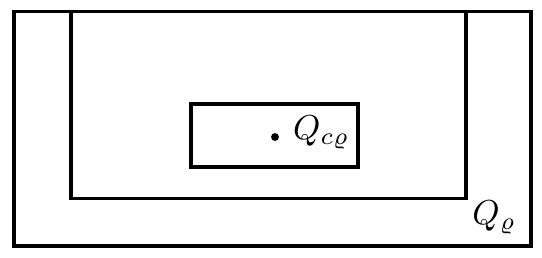}
\caption{Cylinders}
\end{figure}
%\begin{figure}[t]\label{Fig-Q-1}
%\centering
%\includegraphics[scale=.7]{Q.pdf}
%\caption{Cylinders}
%\end{figure}

The lower semicontinuous regularization of $u$ and the set $\mathcal{F}$ of Lebesgue points
can be adapted from \eqref{Eq:lsc-reg} and \eqref{Eq:Lebesgue-pt} in Section~\ref{S:1:1} to the current setting after obvious changes. 
Then as a result of Lemma~\ref{DG:p-q} and Theorem~\ref{Thm:1:1}, we have the following.
\begin{theorem}\label{Thm:lsu-parabolic}
Let $u$ be a local, weak supersolution to \eqref{Eq:p-q} -- \eqref{Eq:1:2p} in $E_T$.
Assume that $u$ is locally, essentially bounded below in $E_T$.
Then $u_*(x,t)=u(x,t)$ for all $(x,t)\in\mathcal{F}$.
In particular, $u_*$ is a lower semicontinuous representative of $u$ in $E_T$.
\end{theorem}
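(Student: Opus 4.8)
The plan is to obtain Theorem~\ref{Thm:lsu-parabolic} as a direct application of the abstract Theorem~\ref{Thm:1:1}. For this I must verify its two hypotheses for a local, weak supersolution $u$ that is locally, essentially bounded below: that $u\in L^1_{\loc}(E_T)$, and that $u$ satisfies the property~\eqref{Property-D}. The first is for free, since $u\in L^p_{\loc}\big(0,T;W^{1,p}_{\loc}(E)\big)$ with $p>1$ already forces $u\in L^p_{\loc}(E_T)\subset L^1_{\loc}(E_T)$ (alternatively one may quote the improved integrability $u\in L^{\infty}_{\loc}\big(0,T;L^{q+1}_{\loc}(E)\big)$ recorded in Remark~\ref{Rmk:2:6}). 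All the work is thus in recasting Lemma~\ref{DG:p-q} as an instance of~\eqref{Property-D}.

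To do so I would take $\theta=1$ throughout and identify the abstract cube $\mathcal{Q}_\rho(y)$ of Section~\ref{S:1:1} with the centered space--time cylinder $(y,s)+Q_\rho=K_\rho(y)\times(s-\rho^p,s+\rho^p)$; this fits the scaling template of Section~\ref{S:1:1} upon choosing $\al_i=1$ for $i=1,\dots,N$ and $\al_{N+1}=p$, so that $\rho^p$ reflects the parabolic time scaling. Since Lemma~\ref{DG:p-q} is phrased for \emph{backward} cylinders $Q^-_\rho$, the one concrete step is a routine covering argument: with a fixed fraction $\kappa\in(0,1)$, every point of $(y,s)+Q_{c\rho}$ lies in the good sub-cylinder $(z,\tau)+Q^-_{\frac34\kappa\rho}$ of some backward cylinder $(z,\tau)+Q^-_{\kappa\rho}\subset(y,s)+Q_\rho$. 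Because
\[
\big|[u\le\mu^-+M]\cap\big[(z,\tau)+Q^-_{\kappa\rho}\big]\big|\le\big|[u\le\mu^-+M]\cap\big[(y,s)+Q_\rho\big]\big|
\]
and $|Q^-_{\kappa\rho}|=\tfrac12\kappa^{N+p}|Q_\rho|$, a measure--density bound with constant $\nu'=\tfrac12\kappa^{N+p}\nu$ on the cylinder $(y,s)+Q_\rho$ yields the density bound required by Lemma~\ref{DG:p-q} on each $(z,\tau)+Q^-_{\kappa\rho}$, and applying the lemma on each gives $u\ge\mu^-+aM$ a.e.\ in $(y,s)+Q_{c\rho}$. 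What matters is that the constant $\nu$ produced by Lemma~\ref{DG:p-q}, and hence $\nu'$, depends only on $a$, $M$, $\mu^-$ and the data, and is \emph{independent of $\rho$} --- visible from the closed--form expression for $\nu$ in its proof. This is exactly property~\eqref{Property-D}.

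With~\eqref{Property-D} established, I would adapt the lower semicontinuous regularization~\eqref{Eq:lsc-reg} and the set $\mathcal{F}$ of Lebesgue points~\eqref{Eq:Lebesgue-pt} to the present setting by replacing the cubes $\mathcal{Q}_r(x)$ with the centered cylinders $(x,t)+Q_r$; the Lebesgue Differentiation Theorem still gives $|\mathcal{F}|=|E_T|$. Theorem~\ref{Thm:1:1} then yields at once $u(x,t)=u_*(x,t)$ for every $(x,t)\in\mathcal{F}$, and the lower semicontinuity of $u_*$.

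I do not expect a genuine obstacle, since the substance is already spent in the energy estimate of Proposition~\ref{Prop:2:1}, the De Giorgi iteration of Lemma~\ref{DG:p-q}, and the measure--theoretic argument of Theorem~\ref{Thm:1:1}. The two points deserving a moment's care are: (i) the $\rho$-independence of $\nu$ noted above, which is the whole reason the scheme works; and (ii) that the order of quantifiers in~\eqref{Property-D} is respected when Theorem~\ref{Thm:1:1} is invoked --- namely that $\mu^-=\essinf_{(x_o,t_o)+Q_R}u$ and $M$ are fixed first, then $a$, and only then $\nu$ --- which is precisely how the proof of Theorem~\ref{Thm:1:1} is arranged. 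If one preferred the sharper, more direct route sketched in Remark~\ref{Rmk:2:1}, one would additionally exploit the explicit, amenable dependence of $\nu$ on $M$ and $a$ displayed in Lemma~\ref{DG:p-q}; but that refinement is not needed for the statement as given.
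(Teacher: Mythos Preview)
Your proposal is correct and follows essentially the same approach as the paper: verify property~\eqref{Property-D} for centered cylinders from Lemma~\ref{DG:p-q} (with $\theta=1$) and then invoke Theorem~\ref{Thm:1:1}. The only cosmetic difference is that the paper rephrases Lemma~\ref{DG:p-q} by viewing the centered cylinder $Q_\rho$ as a single backward cylinder $(0,\rho^p)+Q^-_\rho(2)$ (see Figure~\ref{Fig-2}) rather than via your covering by many small backward sub-cylinders, but both arrive at the same conclusion with a $\rho$-independent $\nu$.
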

\begin{remark}\label{Rmk:2:8}\upshape
The local boundedness of weak super(sub)-solutions to  \eqref{Eq:p-q} -- \eqref{Eq:1:2p}
is an independent issue. We refer to \cite{DB, DBGV-mono} for this issue.
\end{remark}
%%%%%%%%%%
\section{Pointwise behavior}\label{S:ptwise}
In this section, we study the pointwise behavior of the lower semicontinuous representative $u_*$
defined in Theorem~\ref{Thm:lsu-parabolic} for supersolutions to the parabolic equation \eqref{Eq:p-q} -- \eqref{Eq:1:2p}.
We keep using the cylinders defined in \eqref{cylinders}.
The first theorem asserts that for any point $(x,t)$ in the domain $E_T$,
the point value $u_*(x,t)$ can be recovered by the ``$\essliminf$" of previous times; see Figure~\ref{Fig:mean-value}.
%For this purpose, we define the cylinder
%\[
%(x,t)+Q'_{\rho}(\theta):=K_{\rho}(x)\times(t-2\theta\rho^p,t-\theta\rho^p)\quad\text{ for some }\theta>0.
%\]

%As before we will denote a backward cylinder with vertex at $(y,s)$
%by $(y,s)+Q_\rho(\theta)=K_\rho(y)\times(s-\theta\rho^p,s]$.
%A cylinder centered at $(y,s)$ is denoted by 
%$(y,s)+\mathcal{Q}_\rho(\theta)=K_\rho(y)\times(s-\theta\rho^p,s+\theta\rho^p]$.
\begin{theorem}\label{Thm:pointwise}
Let $u$ be a  local, weak supersolution to \eqref{Eq:p-q} -- \eqref{Eq:1:2p} in $E_T$,
Assume that $u$ is locally, essentially bounded below in $E_T$
and that $u_*$ is the lower semicontinuous representative of $u$ obtained in Theorem~\ref{Thm:lsu-parabolic}.
%Suppose either $q=1$ or $p=2$ holds.
For every $(x,t)\in E_T$, there holds %exists $\theta>0$, such that
\begin{equation}\label{Eq:3:0}
u_*(x,t)=\inf_{\theta>0}\lim_{\rho\to0}\essinf_{(x,t)+Q'_{\rho}(\theta)}u,
\end{equation}
where
$(x,t)+Q'_{\rho}(\theta):=K_{\rho}(x)\times(t-2\theta\rho^p,t-\theta\rho^p)$. In particular, we have
\[
u_*(x,t)=\lim_{\rho\to0}\essinf_{(x,t)+Q^-_{\rho}}u\equiv\essliminf_{\substack{(y,s)\to(x,t)\\ s<t}}u(y,s)\quad\text{ for every }(x,t)\in E_T.
\]
\end{theorem}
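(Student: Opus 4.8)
The plan is to establish
$$
u_*(x,t)\ \le\ \lim_{\rho\to0}\essinf_{(x,t)+Q^-_\rho}u\ \le\ \inf_{\theta>0}\lim_{\rho\to0}\essinf_{(x,t)+Q'_\rho(\theta)}u\ \le\ u_*(x,t),
$$
the first two inequalities being soft and the last one carrying all the analytic content. Write $A:=u_*(x,t)$, let $B:=\lim_{\rho\to0}\essinf_{(x,t)+Q^-_\rho}u$ (a genuine limit, since the backward cylinders $(x,t)+Q^-_\rho$ decrease to $(x,t)$), and let $I_\theta$ denote the $\rho\to0$ limit of $\essinf_{(x,t)+Q'_\rho(\theta)}u$ (for the estimates below only $\liminf_{\rho\to0}$ is used, and only a subsequence in the contradiction argument, so the argument is insensitive to the existence of this limit). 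Recalling that $u_*(x,t)=\lim_{\rho\to0}\essinf_{(x,t)+Q_\rho}u$ with $(x,t)+Q_\rho=K_\rho(x)\times(t-\rho^p,t+\rho^p)$ — the parabolic form of \eqref{Eq:lsc-reg} and \eqref{cylinders} — the two elementary links follow from the inclusions $(x,t)+Q^-_\rho\subset(x,t)+Q_\rho$ and $(x,t)+Q'_\rho(\theta)\subset(x,t)+Q^-_{c_\theta\rho}$ with $c_\theta:=\max\{1,(2\theta)^{1/p}\}$: letting $\rho\to0$ they yield $A\le B$ and $B\le I_\theta$ for every $\theta>0$, hence $A\le B\le\inf_{\theta>0}I_\theta$. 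Everything then reduces to proving $\inf_{\theta>0}I_\theta\le A$, upon which the three quantities coincide — which is \eqref{Eq:3:0} — and the ``in particular'' assertion is just the identification of $\lim_{\rho\to0}\essinf_{(x,t)+Q^-_\rho}u$ with $\essliminf_{(y,s)\to(x,t),\,s<t}u(y,s)$.

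For $\inf_{\theta>0}I_\theta\le A$ I would argue by contradiction (the case $A=+\infty$ being vacuous): assume $\inf_\theta I_\theta>A$ and fix $\ell$ with $A<\ell<\inf_\theta I_\theta$. Then for every $\theta>0$ there is a (sub)sequence $\rho\to0$ along which $u\ge\ell$ a.e.\ on $(x,t)+Q'_\rho(\theta)$; in particular $u(\cdot,s)\ge\ell$ a.e.\ on $K_\rho(x)$ for a.e.\ time $s$ in that slab. The point is to feed this \emph{full-measure} positivity on a past time-slice into the expansion of positivity for weak supersolutions to \eqref{Eq:p-q}--\eqref{Eq:1:2p} (developed in Section~\ref{S:ptwise}), so as to conclude that $u$ stays comparably large on a genuine space--time neighborhood of $(x,t)$; that neighborhood then bounds $A=u_*(x,t)$ from below and contradicts $A<\ell$. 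To absorb the multiplicative loss inherent in the expansion of positivity, I would first fix a small $r_0>0$ and put $\mu^-:=\essinf_{(x,t)+Q_{r_0}}u$; since $\mu^-\uparrow A$ as $r_0\downarrow0$, one can make $A-\mu^-$ as small as desired while $\kappa:=\ell-\mu^-$ stays bounded away from $0$ (it tends to $\ell-A>0$) and $\mu^-$ stays bounded. Applying the expansion of positivity to the nonnegative supersolution $v:=u-\mu^-$: from $v(\cdot,s)\ge\kappa$ a.e.\ on $K_\rho(x)$ one obtains $\eta\in(0,1)$ and an intrinsic time scale $T=T(\kappa)>0$, depending only on the data, $\kappa$ and $\mu^-$, such that $v\ge\eta\kappa$ a.e.\ on $K_{c\rho}(x)\times(s+b_1T\rho^p,\,s+b_2T\rho^p)$ with $0<b_1<b_2$ fixed by the structure of the equation. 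Choosing $\theta_*$ so that the slab $(x,t)+Q'_\rho(\theta_*)$ consists exactly of the times $s$ for which $t\in(s+b_1T\rho^p,\,s+b_2T\rho^p)$ — this is precisely the role of the free parameter $\theta$ — and taking $\rho$ (from the subsequence) also $\le c_0 r_0$ so that the whole construction lies inside $(x,t)+Q_{r_0}$, where $v\ge0$, and letting $s$ run over the full-measure set of good times in that slab, the positivity intervals sweep out a neighborhood of $t$. Hence $v\ge\eta\kappa$ a.e.\ on $K_{c\rho}(x)\times(t-\delta\rho^p,t+\delta\rho^p)$ for some $\delta>0$, so $A-\mu^-=(u-\mu^-)_*(x,t)\ge\eta\kappa=\eta(\ell-\mu^-)$, i.e.\ $A-\mu^-\ge\tfrac{\eta}{1-\eta}(\ell-A)$. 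Since $\ell-A>0$ is fixed while $\eta$ stays bounded away from $0$ and from $1$ as $r_0\to0$, we may have chosen $r_0$ so small that $A-\mu^-<\tfrac{\eta}{1-\eta}(\ell-A)$ — a contradiction.

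The principal obstacle is the expansion-of-positivity step together with its sharp parabolic geometry: one needs that full-measure positivity of $v$ on a time-slice propagates forward in time over an interval of length the intrinsic scale $T(\kappa)\rho^p$ — this is exactly where the degeneracy/singularity of \eqref{Eq:p-q} is felt, and why the slabs in \eqref{Eq:3:0} must carry the free dilation parameter $\theta$, which one tunes to $T(\kappa)$ — and one must verify that the constants $\eta$, $T(\kappa)$, $b_1$, $b_2$ do not degenerate as $r_0\to0$, which holds precisely because $\kappa=\ell-\mu^-$ is bounded away from $0$ and $\mu^-$ is bounded. The auxiliary step of turning ``$v\ge\eta\kappa$ a.e.\ on $K_{c\rho}(x)$ times a moving interval, for a.e.\ slab time $s$'' into ``$v\ge\eta\kappa$ a.e.\ on one fixed space--time neighborhood of $(x,t)$'' is a routine Fubini argument. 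Finally, the subsolution version, with $\essliminf$ replaced by $\esslimsup$, follows by applying the theorem to $-u$.
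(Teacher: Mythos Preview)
Your proposal is correct and follows essentially the same strategy as the paper: the easy inequality $u_*(x,t)\le\inf_{\theta>0}I_\theta$ comes from inclusions of cylinders, and the reverse is proved by contradiction using forward propagation of positivity from a past time slice, with the free parameter $\theta$ tuned to the intrinsic time scale of the equation.

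A few points of comparison are worth noting. The paper's key tool is not the expansion of positivity of Propositions~\ref{exp-positivity-1}--\ref{exp-positivity-2} (which are stated only for $q=1$ or $p=2$ and carry a time lag $b_1>0$), but rather Lemma~\ref{DG:p-q-bdry}, a De Giorgi type lemma with ``initial data'' proved directly from the energy estimate for the full doubly nonlinear equation. That lemma has no time lag: from $u(\cdot,s)\ge\mu^-+M$ a.e.\ on $K_\rho$ one obtains $u\ge\mu^-+\tfrac12 M$ a.e.\ on the forward cylinder $(y,s)+Q^+_{\frac34\rho}(\tilde\theta)$ starting at $s$ itself. Consequently the paper picks a single Lebesgue instant $t_o=-\theta\rho^p$ at the top of the slab and immediately covers a neighborhood of the origin by choosing $\theta<[(\tfrac34)^p-(\tfrac12)^p]\tilde\theta$; your ``sweeping by Fubini'' is unnecessary, since one good $s$ with $t$ in the interior of the resulting time interval already gives a full space--time neighborhood. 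Your proof works with the time-lagged version too, but then you must either justify an expansion of positivity for general $p,q$ (not in the paper) or simply take $b_1=0$ via Lemma~\ref{DG:p-q-bdry}. Finally, the paper's contradiction is more direct than your $\tfrac{\eta}{1-\eta}$ inequality: once $u\ge u_*(0,0)+\tfrac18 M$ a.e.\ on $Q_{\frac12\rho}(\theta)$ for arbitrarily small $\rho$, the definition of $u_*(0,0)$ is violated outright, with no need to track how $\eta$ behaves as $r_0\to0$.
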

\begin{figure}\label{Fig:mean-value}
\centering
\includegraphics[scale=1]{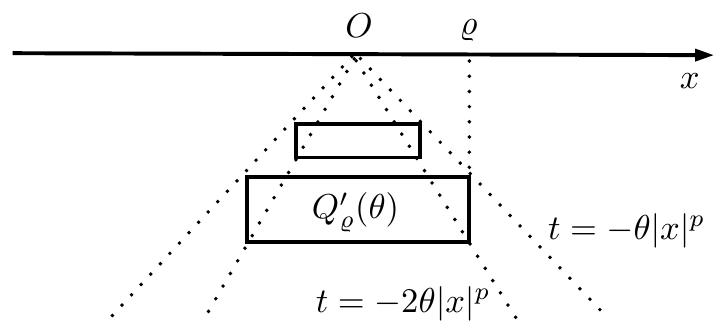}
\caption{Pointwise behavior}
\end{figure}
\begin{remark}\label{Rmk:3:1}\upshape
The infimum of $\theta$ in \eqref{Eq:3:0} can be attained at some real number, if $p=2$ and $q=1$; see Remark~\ref{Rmk:3:3}.
%The parameter $\theta$ is independent of $\rho$.
%However it generally depends on $(x,t)$ unless $p=2$ and $q=1$.
\end{remark}
\begin{remark}\label{Rmk:3:2}\upshape
%As a consequence, 
%the recovery of $u_*(0,0)$ via ``$\essliminf$"
%may take place along the cylinders $Q^-_{\rho}$ as $\rho\to0$, that is
More generally, the recovery of $u_*(x,t)$ via ``$\essliminf$"
could also take place along regions that are much smaller than $(x,t)+Q^-_{\rho}$ as $\rho\to0$.
For instance, assume $(x,t)=(0,0)$, let $\varep>0$ and 
define the region $\mathfrak{R}_{\rho}^\varep:=\{(x,t)\in\rn\times\rr: -|x|^{p+\varep}<t<0,\, |x|<\rho\}$.
Then we have
\[
u_*(0,0)=\lim_{\rho\to0}\essinf_{\mathfrak{R}_{\rho}^\varep} u.
\]
This seems not observed even when $p=2$. See \cite[Corollary~3.53]{Watson}
for the heat equation in this connection. 
%The reason is illustrated in Figure~\ref{Fig:mean-value}: 
%the region $\mathfrak{R}_{\rho}^\varep$ includes
%$\{(x,t)\in\rn\times\rr: -\theta |x|^{p}<t<0,\, |x|<\rho\}$ as $\rho\to0$.
\end{remark}
%{\color{red} Are there more than one lower semicontinuous representatives?}
Now we prepare to start the proof.
By the definition of $u_*$ in \eqref{Eq:lsc-reg} (after an adaption), it is obvious that for every $(x,t)\in E_T$
there holds %and for any $\theta>0$
\begin{equation}\label{Eq:3:1}
u_*(x,t)=\essliminf_{(y,s)\to(x,t)}u(y,s)\le\inf_{\theta>0}\lim_{\rho\to0}\essinf_{(x,t)+Q'_{\rho}(\theta)}u.
%\essliminf_{\substack{(y,s)\to(x,t)\\ s<t}}u(y,s).
\end{equation}
To show the reverse inequality, we need the following variant version of the De Giorgi type lemma
which involves certain ``initial data". 
Let the numbers $a$, $\mu^-$ and $M$ be defined as in \eqref{Eq:a-M-mu}
with $\mathcal{Q}_\rho(y)$ replaced by $(y,s)+Q_{\rho}(\theta)\subset E_T$. 
\begin{lemma}\label{DG:p-q-bdry}
Let $u$ be a  local, weak supersolution to \eqref{Eq:p-q} -- \eqref{Eq:1:2p} in $E_T$.
Assume that $u$ is locally, essentially bounded below in $E_T$.
 There exists a constant
$\theta>0$ depending only on 
$a$, $M$, $\mu^-$ and the data, such that if $s$ is a Lebesgue instant and
\[
u(\cdot, s)\ge \mu^- + M\quad\text{ a.e. in }K_\rho(y),
\]
then 
\[
u\ge \mu^- + a M \quad\text{ a.e. in } (y,s)+Q^+_{\frac34 \varrho}(\theta).
%\pm(\mu^{\pm}-u)\ge\frac{M}2\quad\text{ a.e. in }(y,s)+Q_{\frac{\rho}2}(\theta).
\]
%Moreover,
%\[
%\nu=\gm\frac{M^{m-1}{\theta}}
%{\big(1+M^{m-1}{\theta}\big)^\frac{N+2}{2}}
%\]
%for some $\gm>0$ depending only on the data.
\end{lemma}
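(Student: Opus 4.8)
The plan is to run a De Giorgi iteration analogous to the one in the proof of Lemma~\ref{DG:p-q}, but over the \emph{forward} cylinders $(y,s)+Q^+_\rho(\theta)$, exploiting the hypothesis $u(\cdot,s)\ge\mu^-+M$ a.e.\ in $K_\rho(y)$ to kill the ``initial data'' contribution in the energy estimate of Proposition~\ref{Prop:2:1}. Normalize $(y,s)=(0,0)$. Since $s=0$ is a Lebesgue instant, the energy inequality of Proposition~\ref{Prop:2:1} applied on forward cylinders $K_R\times(0,S)$ carries the term $\int_{K_R\times\{0\}}\z^p\mathfrak g(u,k)\,\dx$ on the right; but for every truncation level $k\le\mu^-+M$ one has $(u(\cdot,0)-k)_-=0$ a.e.\ in $K_R$, hence $\mathfrak g(u(\cdot,0),k)=0$ and that term drops. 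So for all such $k$ we get a clean energy estimate on $Q^+_R(\theta)$ with no initial contribution.

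Next I would set up the nested cylinders converging to $Q^+_{\frac34\rho}(\theta)$: for $n\ge0$ put
\[
k_n=\mu^-+aM+\frac{(1-a)M}{2^n},\qquad
\rho_n=\tfrac34\rho+\frac{\rho}{2^{n+2}},\qquad
Q_n^+=K_{\rho_n}\times(0,\theta\rho_n^p),
\]
and intermediate levels $\tilde k_n$, radii $\tilde\rho_n$ as in Lemma~\ref{DG:p-q}, with cutoffs $\z_n$ vanishing on the lateral boundary $\partial K_{\rho_n}\times(0,\theta\rho_n^p)$ (but \emph{not} at $t=0$, which is exactly where we use that the initial term vanishes), satisfying $|D\z_n|\le\gm 2^n/\rho$ and $|\partial_t\z_n|\le\gm 2^{pn}/(\theta\rho^p)$. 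Repeating verbatim the estimates of $I_1$ and $I_2$ in the proof of Lemma~\ref{DG:p-q} (distinguishing $q\ge1$ and $0<q\le1$ for the $\mathfrak g$-term), then applying Hölder together with the parabolic Sobolev embedding \cite[Chapter~I, Proposition~3.1]{DB}, yields the recursive inequality
\[
Y_{n+1}\le \frac{\gm b^n}{(1-a)^{\frac{N+3}{N+2}}}\Big(1+\tfrac{\max\{L^{q-1},M^{q-1}\}M^{2-p}}{\theta}\Big)
\Big(\tfrac{\theta}{M^{2-p}\min\{L^{q-1},M^{q-1}\}}\Big)^{\frac1{N+2}} Y_n^{1+\frac1{N+2}},
\]
where $Y_n=|[u<k_n]\cap Q_n^+|/|Q_n^+|$ and $L=\max\{|\mu^-|,|\mu^-+M|\}$. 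The key structural point distinguishing this lemma from Lemma~\ref{DG:p-q} is that here $Y_0=0$ by hypothesis: $[u<k_0]\cap Q_0^+\subset[u<\mu^-+M]\cap Q_0^+$, and although this set need not be empty for $t>0$, the fast-geometric-convergence lemma \cite[Chapter~I, Lemma~4.1]{DB} requires only $Y_0\le\nu$ for the constant $\nu$ read off from the recursion. So I would instead \emph{choose $\theta$}: the constant $\nu$ above is an explicit increasing-then-controlled function of $\theta$, and one checks that $\nu(\theta)\to$ a fixed positive value (or stays bounded below) as $\theta$ ranges over a suitable interval; picking $\theta$ so that the measure-smallness $Y_0\le\nu$ is automatic — which it is, since under the initial hypothesis one can in fact bound $Y_0$ by a quantity that tends to $0$ as $\theta\to0$, because $[u<\mu^-+M]\cap Q_0^+\subset[u<\mu^-+M,\ 0<t<\theta\rho^p]$ and the energy estimate controls $\iint_{Q_0^+}|D(u-(\mu^-+M))_-|^p$ by $\theta^{-1}$-free data while the sup-in-time term is small for small $\theta$ — forces $Y_n\to0$ and hence $u\ge k_\infty=\mu^-+aM$ a.e.\ in $Q^+_{\frac34\rho}(\theta)$.

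The main obstacle is the last point: making precise why, choosing $\theta$ small (depending only on $a,M,\mu^-$ and the data), the initial smallness $Y_0\le\nu(\theta)$ holds. The honest route is to first prove an auxiliary smallness estimate — essentially that $\iint_{Q^+_{\rho_0}(\theta)}(u-(\mu^-+M))_-^{p}\,\dx\dt$ and the accompanying sup-term are $O(\theta)\cdot(\text{data})\cdot|Q^+_{\rho_0}(\theta)|$, using Proposition~\ref{Prop:2:1} with the initial term gone — and then feed this into the iteration as the starting datum, absorbing the residual $\theta$-dependence into the choice of $\theta$. This is the parabolic analogue of the ``De Giorgi lemma with initial data'' in \cite[Chapter~3, Lemma~5.1]{DBGV-mono} (for $p=2$) and \cite[Chapter~4]{DBGV-mono} (for the $p$-Laplacian), and the computation is routine once the initial term is dispatched; the only genuinely new bookkeeping is tracking the $L$- and $M$-powers arising from $\mathfrak g(u,k)$ across both regimes $q\ge1$ and $0<q\le1$, exactly as in the proof of Lemma~\ref{DG:p-q}.
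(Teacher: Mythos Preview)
Your overall plan is right, and you correctly identify that the hypothesis $u(\cdot,s)\ge\mu^-+M$ kills the initial integral in Proposition~\ref{Prop:2:1}. But you miss the simplification that drives the paper's proof: since the cutoff need not vanish at $t=0$ \emph{and} the energy estimate of Proposition~\ref{Prop:2:1} imposes no condition at the top of the cylinder either, one may take $\z=\z(x)$ \emph{time-independent}. Then $\z_t\equiv0$, the $I_2$-term disappears entirely, and (with cylinders $Q_n=K_{\rho_n}\times(0,\theta\rho^p)$ of \emph{fixed} height) the recursion collapses to
\[
Y_{n+1}\le \frac{\gm b^n}{(1-a)^{\frac{N+3}{N+2}}}\left(\frac{\theta}{M^{2-p}\min\{L^{q-1},M^{q-1}\}}\right)^{\frac1{N+2}} Y_n^{1+\frac1{N+2}}.
\]
The threshold is $\nu=\gm^{-1}(1-a)^{N+3}M^{2-p}\min\{L^{q-1},M^{q-1}\}/\theta$, so one simply chooses $\theta$ to make $\nu=1$; then $Y_0\le1=\nu$ holds trivially and the iteration closes. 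No auxiliary smallness estimate is needed at all.

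Your route, by contrast, keeps the $I_2$-term and hence the factor $\big(1+\max\{L^{q-1},M^{q-1}\}M^{2-p}/\theta\big)^{N+2}$ in the denominator of $\nu$; as $\theta\to0$ this forces $\nu(\theta)\sim\theta^{N+1}$. Your auxiliary estimate (energy inequality with the initial term gone, then Chebyshev at a slightly lower level) would give at best $Y_0=O(\theta)$, which is not fast enough to beat $\nu\sim\theta^{N+1}$. So the workaround, as sketched, does not close. The confusion is already visible in your cutoff description: you correctly say $\z_n$ vanishes only on the lateral boundary and not at $t=0$, yet you still impose $|\partial_t\z_n|\le\gm 2^{pn}/(\theta\rho^p)$ --- there is nothing forcing any time dependence, and dropping it is exactly the fix.
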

\begin{proof}
Assume $(y,s)=(0,0)$. We intend to use the energy estimate of Proposition~\ref{Prop:2:1}
in $Q_{R,S}\equiv Q^+_{\rho}(\theta)$.
Note that the time level $s-S$ in Proposition~\ref{Prop:2:1} corresponds to $t=0$ here.
Let $\z(x)$ be a time independent, piecewise smooth, test function in $Q^+_{\rho}(\theta)$ that vanishes on $\pl K_\rho$.
If we take the level $k\le\mu^-+M$, the spatial integral at $t=0$ 
on the right-hand side of the energy estimate (i.e. the term at the time level $s-S$ in Proposition~\ref{Prop:2:1}) vanishes due to
the assumption that $u(\cdot, 0)\ge \mu^- + M$ a.e. in $K_{\rho}$.
The term involving $\z_t$ also vanishes since $\z$ is independent of $t$.
 As a result, the energy estimate yields for $\tilde{k}<k$ that
\begin{align*}
	&\essup_{0<t<\theta\varrho^p}
	\int_{K_\varrho}\z^p\big(|u|+|k|\big)^{q-1}(u-\tilde{k})_-^2\,\dx
	+
	\iint_{Q^+_\varrho(\theta)}\z^p|D(u-\tilde{k})_-|^p\,\dx\dt\\
	&\qquad\le
	\gm\iint_{Q^+_\varrho(\theta)}(u-k)^{p}_-|D\z|^p\,\dx\dt.
\end{align*}
Introduce $k_n$, $\tilde{k}_n$, $\rho_n$, $\tilde{\rho}_n$, $K_n$ and $\widetilde{K}_n$
as in Lemma~\ref{DG:p-q}. The only difference is that the cylinders $Q_n$ and $\widetilde{Q}_n$ are now of forward type, i.e.
$Q_n=K_n\times(0,\theta\rho^p)$ and $\widetilde{Q}_n=\widetilde{K}_n\times(0,\theta\rho^p)$. Note that while shrinking
the base cubes $K_n$ and $\widetilde{K}_n$ along $\rho_n$, we keep the height of the cylinders fixed.
For the piecewise smooth function $\z(x)$,
we may choose it to vanish on $\pl K_n$, be equal to $1$ in $\widetilde{K}_n$, and satisfy $|D\z|\le\gm 2^n/\rho$.
As a result, we may obtain that
\begin{align*}
	&\essup_{0<t<\theta\varrho^p}
	\int_{\widetilde{K}_n} \big(|u|+|k_n|\big)^{q-1}(u-\tilde{k}_n)_-^2\,\dx
	+
	\iint_{\widetilde{Q}_n}|D(u-\tilde{k}_n)_-|^p \,\dx\dt
%	&\qquad\le
%	\boldsymbol\gm\frac{2^{pn}}{\rho^p}
%	\iint_{Q_n}(u-k_n)^{p}_- \,\dx\dt
%	+
%	\boldsymbol\gm\frac{2^{pn}}{\theta\rho^p}
%	\iint_{Q_n}\big(|u|+|k_n|\big)^{q} (u-k_n)_- \,\dx\dt \\
	\le
	\gm \frac{2^{pn}}{\varrho^p}M^{p}|A_n|,
\end{align*}
where $\gm$ depends on the data and $A_n=[u< k_n]\cap Q_n$. We may proceed as in Lemma~\ref{DG:p-q}
and obtain the recursive inequality
\begin{equation*}
\begin{aligned}
	 Y_{n+1}
	\le
	&\frac{\gm b^n}{(1-a)^{\frac{N+3}{N+2}}} 
	\left(\frac{\theta}{M^{2-p}\min\{L^{q-1},M^{q-1}\}}\right)^{\frac1{N+2}} Y_n^{1+\frac{1}{N+2}},
\end{aligned}
\end{equation*}
where $Y_n=|A_n|/|Q_n|$, $L=\max\{| \mu^-|, |\mu^-+M|\}$, and the positive constants $\gm$ and 
$b$ depend only on the data. %and with $\boldsymbol b\equiv 2^\frac{2(N+p+1)}{N+2}$.
Hence, by \cite[Chapter I, Lemma~4.1]{DB}, 
$Y_n\to0$ if we require
$Y_o\le\nu$ where
\begin{align*}
\nu&=\gm^{-1}(1-a)^{N+3}
\frac{M^{2-p}\min\{L^{q-1},M^{q-1}\}}{\theta}
\end{align*}
This requirement, i.e. $Y_o\le\nu$, is fulfilled if $\nu=1$, that is, we take
\[
\theta=\gm^{-1}(1-a)^{N+3}M^{2-p}\min\{L^{q-1},M^{q-1}\}.
\]
We may conclude with such a choice of $\theta$.
\end{proof}

\noi Now we are ready to present\\

\noi{\it \underline{Proof of Theorem~\ref{Thm:pointwise}.}}
Assume $(x,t)=(0,0)$ for simplicity. Recall that we need to show the reverse inequality of \eqref{Eq:3:1}.
Suppose to the contrary that %for any $\theta>0$
\begin{equation}\label{Eq:contrary}
u_*(0,0)=\essliminf_{(y,s)\to(0,0)}u(y,s)<\inf_{\theta>0}\lim_{\rho\to0}\essinf_{Q'_{\rho}(\theta)}u.%\essliminf_{\substack{(y,s)\to(0,0)\\ s<0}}u(y,s).
\end{equation}
By the definition of $u_*(0,0)$, for any $\varep>0$, there exists $\rho_o(\varep)>0$, such that for all $\rho\in(0,\rho_o)$
\[
\mu^-:=u_*(0,0)-\varep\le\essinf_{Q_\rho}u.
\]
According to \eqref{Eq:contrary}, there exists $M>0$ (independent of $\varep$ and $\theta$), such that for any $\theta>0$ there holds
\[
%\essliminf_{\substack{(y,s)\to(0,0)\\ s<0}}u(y,s)
\lim_{\rho\to0}\essinf_{Q'_{\rho}(\theta)}u\ge\mu^-+M.
\]
This means there exists $\rho_1(M,\theta)>0$, such that for all $\rho\in(0,\rho_1)$
\begin{equation}\label{Eq:lower-half}
\essinf_{Q'_{\rho}(\theta)}u\ge\mu^-+\tfrac12M.
\end{equation}
In particular, we may assume $t_o:=-\theta\rho^p$ is a Lebesgue instant without loss of generality and by \eqref{Eq:lower-half}
\[
u(\cdot, t_o)\ge\mu^-+\tfrac12M\quad\text{ a.e. in }K_\rho.
\]
Now we may use Lemma~\ref{DG:p-q-bdry} (with $a=\frac12$) to determine $\tilde\theta>0$ 
depending only on $\mu^-$, $M$ and the data,
such that
\begin{equation}\label{Eq:upper-half}
u\ge\mu^-+\tfrac14M\quad\text{ a.e. in }(0,t_o)+Q^+_{\frac34\rho}(\tilde\theta).
\end{equation}
To proceed, we choose $\theta$ to satisfy
$t_o+\tilde\theta(\frac34\rho)^p>\tilde\theta(\frac12\rho)^p$, i.e. $\theta<[(\frac34)^p-(\frac12)^p]\tilde\theta$.
Combining \eqref{Eq:upper-half} with \eqref{Eq:lower-half}
%letting $\hat{\theta}=\min\{\theta,\tilde\theta\}$ 
and taking $\varep\le\frac18M$,
we arrive at
\[
u\ge\mu^-+\tfrac14M=u_*(0,0)-\varep+\tfrac14M\ge u_*(0,0)+\tfrac18M\quad\text{ a.e. in }Q_{\frac12\rho}(\theta),
\]
for all $\rho\le\min\{\rho_o,\rho_1\}$.
This yields a contradiction to the definition of $u_*(0,0)$ and hence completes the proof.
\hfill $\square$
%%%%
\begin{remark}\label{Rmk:3:3}\upshape
In the above proof, the final choice of $\theta$ via $\theta<[(\frac34)^p-(\frac12)^p]\tilde\theta$,
is possible only if $M$ is independent of $\theta$, as $\tilde\theta$ generally depends on $M$ according to Lemma~\ref{DG:p-q-bdry}.
However, if $p=2$ and $q=1$, the number $\tilde\theta$ can be selected in terms of the data only.
In such a case, a closer inspection of the proof indicates that $\inf_{\theta>0}$ could be attained. %as claimed in Remark~\ref{Rmk:3:1}.
\end{remark}
%%%%%%%%%%
\subsection{A mean value property}\label{S:3:1}
In this section, we use the limit of  certain integral averages to recover 
the point value of $u_*(x,t)$ for every $(x,t)$ in the domain $E_T$.
Since to our knowledge this result has not been written in the literature even for the non-degenerate case,
we will not pursue it for the general doubly nonlinear equation \eqref{Eq:p-q} -- \eqref{Eq:1:2p}. Instead,
concentration will be made on the parabolic $p$-Laplace type equation ($q=1$) and 
the porous medium type equation ($p=2$). %see Figure~\ref{Fig:mean-value}.

\begin{proposition}\label{Lm:Lebesgue-pt}
Let $u$ be a locally bounded, local, weak supersolution to \eqref{Eq:p-q} -- \eqref{Eq:1:2p} in $E_T$,
%Assume $u$ is locally, essentially bounded below in $E_T$
and $u_*$ be the lower semicontinuous representative of $u$ obtained in Theorem~\ref{Thm:lsu-parabolic}.
Suppose that either $q=1$ or $p=2$ holds.
For every $(x,t)\in E_T$, there holds %exists $\theta>0$, such that
\[
\inf_{\theta>0}\lim_{\rho\to0}\bint_{t-2\theta\rho^p}^{t-\theta\rho^p}\bint_{K_{\rho}(x)}|u_*(x,t)-u(y,s)|\,\dy\d s=0.
%\text{ for all }(x,t)\in E_T.
\]
\end{proposition}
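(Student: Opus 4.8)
The plan is to deduce the mean value property from the pointwise ``$\essliminf$'' characterization in Theorem~\ref{Thm:pointwise}, together with the one-sided bound it supplies and an upper bound coming from the lower semicontinuity of $u_*$. Assume $(x,t)=(0,0)$ and abbreviate $\ell:=u_*(0,0)$. Fix $\theta>0$ and let $\varep>0$. First I would use Theorem~\ref{Thm:pointwise}, specifically \eqref{Eq:lower-half} in its proof (which already produces, for the given $\theta$, an $M$ and a radius $\rho_1$ so that $\essinf_{Q'_\rho(\theta)}u\ge \ell-2\varep$ once one tracks the constants carefully), to obtain the lower bound: there is $\rho_1=\rho_1(\theta,\varep)>0$ such that
\[
u(y,s)\ge \ell-\varep\quad\text{ for a.e. }(y,s)\in K_\rho\times(-2\theta\rho^p,-\theta\rho^p),\ \ \rho\in(0,\rho_1).
\]
Actually, what Theorem~\ref{Thm:pointwise} gives directly is that $\inf_{\theta>0}\lim_{\rho\to0}\essinf_{Q'_\rho(\theta)}u=\ell$; since the quantity is monotone in $\theta$ in the appropriate sense, for each fixed $\theta$ one still has $\lim_{\rho\to0}\essinf_{Q'_\rho(\theta)}u\ge\ell$, which yields the displayed lower bound. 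Hence $[u(y,s)-\ell]_-\le\varep$ on the relevant cylinder, so the negative part contributes at most $\varep$ to the average.

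For the matching upper bound on the average of $[u(y,s)-\ell]_+$, I would exploit that $u_*$ is lower semicontinuous and is the a.e.-representative of $u$, so $u(y,s)=u_*(y,s)$ for a.e.\ $(y,s)$, and lower semicontinuity alone does not bound $u$ from above. This is where local boundedness of $u$ (hypothesis of the Proposition) and, more importantly, an $L^1_{\loc}$ control of the positive oscillation is needed. The clean route is: apply Theorem~\ref{Thm:pointwise} again, but now recovering $\essinf$ over \emph{centered} cylinders is not enough; instead I would invoke the De Giorgi lemma in the form already stated (Lemma~\ref{DG:p-q}) in the \emph{contrapositive}. If the average of $[u-(\ell+\delta)]_+$ over $Q'_\rho(\theta)$ did not tend to $0$, then for a sequence $\rho\to0$ the set $[u>\ell+\delta]\cap Q'_\rho(\theta)$ would have density bounded below; but combining this with the lower bound $u\ge\ell-\varep$ just established, and running the expansion-of-positivity / De Giorgi machinery forward in time from the slab $\{s=-\theta\rho^p\}$ (as in Lemma~\ref{DG:p-q-bdry}), one would propagate $u\ge\ell+\tfrac12\delta$ into a full centered cylinder $Q_{c\rho}(\theta')$ around $(0,0)$, contradicting $u_*(0,0)=\ell$. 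Since $\delta>0$ is arbitrary, $\lim_{\rho\to0}\bint\bint [u(y,s)-\ell]_+\,\dy\d s=0$ for this $\theta$.

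Combining the two parts, for each fixed $\theta>0$,
\[
\limsup_{\rho\to0}\bint_{-2\theta\rho^p}^{-\theta\rho^p}\bint_{K_\rho}|u_*(0,0)-u(y,s)|\,\dy\d s\le \varep,
\]
and letting $\varep\downarrow0$ gives that the $\lim_{\rho\to0}$ exists and equals $0$ for every $\theta$; a fortiori $\inf_{\theta>0}$ of it is $0$. The restriction to $q=1$ or $p=2$ enters only through the availability of the expansion-of-positivity estimates in the degenerate/singular regimes treated in \cite{DBGV-mono}, which underlie the forward-propagation step. \textbf{The main obstacle} I anticipate is precisely the upper bound: lower semicontinuity of $u_*$ gives no quantitative control from above, so one cannot simply estimate $[u-\ell]_+$ by an oscillation bound; the argument must genuinely re-use the parabolic De Giorgi/expansion-of-positivity mechanism in a contrapositive, forward-in-time form to convert ``large positive set in a past slab'' into ``strict pointwise lower bound at $(0,0)$'', and making the geometry of the intrinsic cylinders match up (choosing $\theta'$ versus $\theta$, and the radius $c\rho$) is the delicate bookkeeping.
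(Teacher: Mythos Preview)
Your overall strategy---argue by contradiction and use expansion of positivity to force $u\ge u_*(0,0)+c$ on a centered cylinder---is exactly the paper's. The lower-bound half is also fine: for any fixed $\theta$ one has $Q'_\rho(\theta)\subset Q_\rho(2\theta)$, hence $\essinf_{Q'_\rho(\theta)}u\ge\essinf_{Q_\rho(2\theta)}u\to u_*(0,0)$ as $\rho\to0$, so $[u-\ell]_-\le\varep$ on $Q'_\rho(\theta)$. (No ``monotonicity in $\theta$'' is needed and Theorem~\ref{Thm:pointwise} is not actually used here.)

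The genuine gap is in the upper-bound half, and it is not bookkeeping. You fix $\theta$ first and then extract, from the contradiction hypothesis \emph{for that} $\theta$, a level $\dl=\dl(\theta)>0$ and a density lower bound for $[u>\ell+\dl]$ at some instant $s\in[-2\theta\rho^p,-\theta\rho^p]$. Expansion of positivity (Propositions~\ref{exp-positivity-1}, \ref{exp-positivity-2}) then produces $u\ge\ell+\eta\dl$ only in a time window at intrinsic distance $\approx\gm\dl^{2-p}\rho^p$ (resp.\ $\gm\dl^{1-m}\rho^2$) ahead of $s$; for this window to contain $t=0$ you need $\theta\approx\gm\dl^{2-p}$. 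Since $\dl$ was manufactured from the very $\theta$ you started with, there is no reason this relation holds; in the singular range $1<p<2$, iterating Lemma~\ref{DG:p-q-bdry} does not help either, because the total time advanced is a convergent geometric series of order $\dl^{2-p}\rho^p$. In effect you are attempting to prove the stronger assertion that the limit vanishes for \emph{every} $\theta$, which the paper neither claims nor proves (cf.\ Remark~\ref{Rmk:3:1}).

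The paper avoids this circularity by reversing the quantifiers: it assumes $\inf_{\theta>0}\lim_{\rho\to0}(\cdots)=\dl>0$, so $\dl$ is independent of $\theta$. It then sets $v=u-\ell+\varep\ge0$, takes $M=\tfrac14\dl$, obtains (via the local bound $\mu^+$) the density estimate $|[v>M]\cap Q'_\rho(\theta)|\ge\frac{\dl}{4\mu^+}|Q'_\rho(\theta)|$ valid for \emph{any} $\theta$, applies expansion of positivity, and only at the very end chooses $\theta=\gm_1 M^{2-p}$ (resp.\ $\gm_1 M^{1-m}$) so that the forward cylinder produced by the expansion actually covers a centered neighborhood of the origin. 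The $\inf_{\theta>0}$ in the statement is there precisely to license this final choice.
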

\begin{remark}\upshape
The infimum of $\theta$ can be attained at some real number, if $p=2$ and $q=1$;
see Section~\ref{S:remove-bd}.
%The parameter $\theta$ is independent of $\rho$.
%However it generally depends on $(x,t)$ unless $p=2$ and $q=1$.
\end{remark}
%%
% 
%Under the assumptions of Proposition~\ref{Lm:Lebesgue-pt}, we obtain that
%the ``$\essliminf$" recovery of point values of $u_*$ in Theorem~\ref{Thm:pointwise}
%can be improved in the following sense.
%\begin{corollary}\label{Cor:3:1}
%Let the hypothesis of Proposition~\ref{Lm:Lebesgue-pt} holds.
%For every $(x,t)\in E_T$, there exists $\theta>0$, such that
%\[
%u_*(x,t)=\lim_{\rho\to0}\essinf_{(x,t)+Q'_{\rho}(\theta)}u
%\]
%where
%$(x,t)+Q'_{\rho}(\theta):=K_{\rho}(x)\times(t-2\theta\rho^p,t-\theta\rho^p)$.
%\end{corollary}
\begin{remark}\upshape
Proposition~\ref{Lm:Lebesgue-pt} offers another possible way of proving Theorem~\ref{Thm:pointwise}.
To wit, assume $(x,t)=(0,0)$ for simplicity and observe that for any $\theta>0$
\[
\bint_{-2\theta\rho^p}^{-\theta\rho^p}\bint_{K_{\rho}}|u_*(0,0)-u(y,s)|\,\dy\d s\ge\essinf_{Q'_\rho(\theta)}(u-u_*(0,0)).
\]
This is a refined, parabolic version of the estimate in Remark~\ref{Rmk:2:1}.
Sending $\rho\to0$ and taking $\inf_{\theta>0}$ in the above estimate, Proposition~\ref{Lm:Lebesgue-pt} implies that
\[
u_*(0,0)\ge \inf_{\theta>0}\lim_{\rho\to0}\essinf_{Q'_\rho(\theta)} u.%=\essliminf_{\substack{(y,s)\to(0,0)\\ s<0}}u.
\]
Hence we obtain the reverse inequality of \eqref{Eq:3:1}.
Nevertheless, the additional assumption in Proposition~\ref{Lm:Lebesgue-pt} is a drawback.
\end{remark}
%

%Before proceeding to the proof of Proposition~\ref{Lm:Lebesgue-pt},
% we will first examine it by a counterexample in Section~\ref{S:example},
%which says the time lag of $\theta\rho^p$ is necessary.
In what follows we will deal with the proof of Proposition~\ref{Lm:Lebesgue-pt}
for the parabolic $p$-Laplace type equation (Section~\ref{S:3:2}) and for the porous medium equation (Section~\ref{S:3:3}) separately.
%%%%%%%%%%%%%%%
%\subsection{A counterexample}\label{S:example}
%%Let $\{a_n: n=1,2,\dots\}$ be a sequence in $(0,1)$ that decreases to $0$
%%We will have the heat equation in mind. Any 
%Let $\{b_n=-\log^{-1}n:n=3,4,\dots\}$ be a sequence in $(-1,0)$ increasing to $0$. 
%Construct a non-decreasing, lower semicontinuous function $\mathcal{H}:\rr\to[0,1]$ by defining
%\begin{equation*}
%\mathcal{H}(t)=\left\{
%\begin{array}{cl}
%1-|b_n|,\quad &\text{ for } b_n<t\le b_{n+1},\\
%1,\quad &\text{ for } t\ge0\\
%0\quad &\text{ for } t\le b_3.
%\end{array}\right.
%\end{equation*}
%It is not hard to verify that $\mathcal{H}_*=\mathcal{H}$ is a weak supersolution to the heat equation.
%
%Next we claim that
%\[
%\lim_{h\to0}\bint_{-h}^0|\mathcal{H}-\mathcal{H}_*(0)|\,\dt=\lim_{h\to0}\bint_{-h}^0(1-\mathcal{H})\,\dt\neq0.
%\]
%To this end, we only need to show that
%\[
%\lim_{k\to\infty}\bint_{b_k}^0(1-\mathcal{H})\,\dt=\infty.
%\]
%In fact, we calculate for $k\ge3$
%\[
%\bint_{b_k}^0(1-\mathcal{H})\,\dt=\frac1{|b_k|}\sum_{n=k}^{\infty}( b_{n+1}-b_n)|b_n|
%=\log k\sum_{n=k}^{\infty}\frac{\log\frac{n+1}{n}}{\log^2 n\log (n+1)}.
%\]
%Then an application of the integral test would allow us to conclude.
%%%%%%%%%%%%%%%%
\subsection{The parabolic $p$-Laplace type equation}\label{S:3:2}
Let $u$ be locally bounded in $E_T$. For a compact set $K\subset\rr^N$ and
 a cylinder $\boldsymbol Q:=K\times(T_1,T_2)\subset E_T$
we introduce the numbers $\mu^{\pm}$ satisfying
\[
\mu^+\ge\essup_{\boldsymbol Q}u,\quad \mu^-\le\essinf_{\boldsymbol Q} u.
%\quad\om\ge\mu^+-\mu^-.
\]
We also assume $(y,s)\in \boldsymbol Q$, such that the following forward
cylinders
\begin{equation*}
\left\{
\begin{array}{ll}
{\dsty K_{4\rho}(y)\times\left(s,s+\kappa M^{2-p}\rho^p\right)}\quad&\text{ for }1<p<2,\\[5pt]
{\dsty K_{4\rho}(y)\times\left(s,s+\frac{b^{p-2}}{(\eta M)^{p-2}}\kappa\rho^p\right)}\quad&\text{ for }p>2,\\
%{\dsty K_{16\rho}(y)\times\left(s,s+\dl M^{1-m}\rho^2\right]}\quad&\text{ for }0<m<1,\\
%{\dsty K_{16\rho}(y)\times\left(s,s+\frac{b^{m-1}}{(\eta M)^{m-1}}\dl\rho^2\right]}\quad&\text{ for }m>1,
\end{array}\right.
\end{equation*}
are included in $\boldsymbol Q$,
where $M$ is a positive number and
 the parameters $b$, $\eta$ and $\kappa$ will be determined in the following proposition
in terms of the data only. This can always be made true if we choose $\rho$
small enough. The expansion of positivity can be retrieved from Proposition~4.1 and 
Proposition~5.1 in Chapter 4 of \cite{DBGV-mono}, which has been adapted in the following form in \cite{Liao-JDE}
to deal with the H\"older regularity of weak solutions to the parabolic $p$-Laplace type equation.

\begin{proposition}\label{exp-positivity-1}
Let $u$ be a locally bounded below, local, weak supersolution to \eqref{Eq:p-q} -- \eqref{Eq:1:2p}
with $q=1$ in $E_T$.
 Suppose that $s$ is a Lebesgue instant and
\[
\left|\big[ u(\cdot, s)-\mu^-\ge M\big]\cap K_\rho(y)\right|\ge\al |K_\rho|
\]
for some $M>0$ and $\al\in(0,1)$. When $1<p<2$, 
there exist constants $\kappa,\,\eta\in(0,1)$
depending only on the data and $\al$, such that
\[
u(\cdot, t)-\mu^-\ge \eta M\quad\text{ a.e. in }\ K_{\rho}(y)
\]
for all times
\[
s+\tfrac12\kappa M^{2-p}\rho^p\le t\le s+\kappa M^{2-p}\rho^p.
\]
When $p>2$, there exist constants $b>1$, $\kappa,\,\eta\in(0,1)$,
depending only on the data and $\al$, such that
\[
u(\cdot, t)-\mu^-\ge \eta M\quad\text{ a.e. in }\ K_{\rho}(y)
\]
for all times
\[
s+\frac{b^{p-2}}{(\eta M)^{p-2}}\tfrac12\kappa\rho^p\le t\le s+
\frac{b^{p-2}}{(\eta M)^{p-2}}\kappa\rho^p.
\]
\end{proposition}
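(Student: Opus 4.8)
The plan is to derive Proposition~\ref{exp-positivity-1} from the energy estimate of Proposition~\ref{Prop:2:1} (specialised to $q=1$), a companion logarithmic estimate for supersolutions, and the De~Giorgi type Lemma~\ref{DG:p-q}, following the scheme of \cite[Chapter~4]{DBGV-mono} (adapted in \cite{Liao-JDE}); the point worth stressing is that every ingredient uses $u$ only through the supersolution property. First I would normalise: it suffices to take $(y,s)=(0,0)$, and, since the equation with $q=1$ is invariant under adding constants, to replace $u$ by the nonnegative local weak supersolution $v:=u-\mu^-\ge0$. The hypothesis then reads $|[v(\cdot,0)\ge M]\cap K_\rho|\ge\al|K_\rho|$ and the goal is $v\ge\eta M$ a.e.\ on the asserted intrinsic cylinders; all estimates live on concentric cubes between $K_\rho$ and $K_{4\rho}$, the ambient room being consumed by cutoff functions, and the cases $1<p<2$ and $p>2$ proceed in parallel, differing only in the intrinsic time scaling.

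\textbf{Step 1 (transitional lemma: positivity propagates in measure).} I would record the logarithmic estimate for nonnegative supersolutions, obtained in the standard way by testing with a suitable multiple of $\psi(v)\psi'(v)\z^p$, where $\z=\z(x)$ is time independent, $\z\equiv1$ on $K_\rho$, and $\psi(v):=\big[\ln\tfrac{M}{v+cM}\big]_+$ for a small $c$, so that $\psi$ vanishes where $v\ge M$ and grows to $\ln\tfrac1c$ as $v\searrow0$. Because $v(\cdot,0)\ge M$ on a set of relative measure $\ge\al$, the estimate controls $\essup_t\int\psi^2(v(\cdot,t))\z^p\,\dx$ by essentially $(\ln\tfrac1c)^2(1-\al)|K_\rho|$ plus a running term of order $\gm(\ln\tfrac1c)\,\rho^{-p}M^p\,\tau\,|K_\rho|$ over a time strip of length $\tau$. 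Evaluating at a later time and using $\psi(v(\cdot,t))\ge\ln\tfrac1{\sig+c}$ on $[v(\cdot,t)<\sig M]$, then choosing $\sig=c^{\beta}$ with $\beta$ close to $1$ (so that $(\ln\tfrac1c)^2/(\ln\tfrac1{\sig+c})^2$ is close to $1$) and $\tau=\dl_*(\sig M)^{2-p}\rho^p$ with $\dl_*$ small, one arrives at constants $\sig,\dl_*\in(0,1)$, depending only on the data and $\al$, with
\[
\big|[v(\cdot,t)\ge\sig M]\cap K_\rho\big|\ge\tfrac12\al\,|K_\rho|\qquad\text{for all }t\in\big(0,\dl_*(\sig M)^{2-p}\rho^p\big].
\]
For $1<p<2$ the window $\dl_*(\sig M)^{2-p}\rho^p$ is comparable to $M^{2-p}\rho^p$; for $p>2$ it is the \emph{longer} scale $(\sig M)^{2-p}\rho^p$, and since $\sig$ may be taken as small as the target $\eta$, this is precisely why the factor $(\eta M)^{2-p}$ surfaces in the statement.

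\textbf{Step 2 (shrinking the bad set, then De~Giorgi).} Because the bound of Step~1 holds at \emph{every} time slice of the cylinder $Q:=K_\rho\times\big(\tfrac12\dl_*(\sig M)^{2-p}\rho^p,\dl_*(\sig M)^{2-p}\rho^p\big)$, a second, level-by-level energy iteration on $Q$ based on Proposition~\ref{Prop:2:1} (a fast geometric convergence argument) yields, for any prescribed $\nu\in(0,1)$, a level $\sig M2^{-j_*}$ with $j_*=j_*(\nu,\text{data},\al)$ such that $|[v<\sig M2^{-j_*}]\cap Q|\le\nu|Q|$. Choosing $\nu$ to be the constant furnished by Lemma~\ref{DG:p-q} at level $\sig M2^{-(j_*-1)}$, for $Q$ regarded as a backward cylinder at its own top and intrinsically rescaled to that level, Lemma~\ref{DG:p-q} produces $v\ge\tfrac12\sig M2^{-j_*}=:\eta M$ a.e.\ in a concentric sub-cylinder; carrying out Step~1 on a correspondingly enlarged cube so that this sub-cylinder recovers $K_\rho$, and covering the target time interval by finitely many such applications, gives the assertion. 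For $1<p<2$ this lands on an interval of the form $[\tfrac12\kp M^{2-p}\rho^p,\kp M^{2-p}\rho^p]$; for $p>2$, with $\sig\sim\eta$ chosen small in Step~1, it lands on $[\,s+\tfrac12 b^{p-2}(\eta M)^{2-p}\kp\rho^p,\ s+b^{p-2}(\eta M)^{2-p}\kp\rho^p\,]$ for suitable $b>1$ and $\kp\in(0,1)$ depending only on the data and $\al$, after undoing the normalisations.

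\textbf{Main obstacle.} The delicate part is not any single estimate but the bookkeeping of the \emph{intrinsic geometry}, most acutely for $p>2$: the admissible propagation window scales like $(\text{current level})^{2-p}$, so the target level $\eta$, the reduced level of Step~1, the number $j_*$ of shrinking levels, and the length of the time interval are all coupled, and producing the precise interval of the statement while keeping every constant dependent only on the data and $\al$ requires threading this dependence carefully — this is the origin of the intertwined form of $b,\kp,\eta$. A secondary nuisance is uniformity of the logarithmic estimate and of the fast geometric convergence as $p\to1^+$ or $p\to2$ (the Sobolev exponent and the exponent $2-p$ must be controlled near the endpoints). The supersolution-only character should cause no trouble, since Steps~1 and~2 use exclusively the one-sided energy and logarithmic inequalities for the truncation $(v-k)_-$, exactly as in Lemmas~\ref{DG:p-q} and~\ref{DG:p-q-bdry}.
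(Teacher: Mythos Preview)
The paper does not give its own proof of Proposition~\ref{exp-positivity-1} but quotes it from \cite[Chapter~4, Propositions~4.1 and~5.1]{DBGV-mono} (in the form recorded in \cite{Liao-JDE}); your outline---logarithmic propagation of the measure hypothesis, a shrinking lemma over dyadic levels, then the De~Giorgi iteration of Lemma~\ref{DG:p-q}, with the intrinsic time scale $(\text{level})^{2-p}$ distinguishing the two ranges of $p$---is exactly that scheme. One small correction: the mechanism in your Step~2 that forces $|[v<\sig M2^{-j_*}]\cap Q|$ below any prescribed $\nu$ is not a ``fast geometric convergence'' but the De~Giorgi--Poincar\'e isoperimetric inequality combined with the energy estimate of Proposition~\ref{Prop:2:1}, iterated over the levels $\sig M2^{-j}$; the fast geometric convergence is what happens \emph{after} that, inside Lemma~\ref{DG:p-q}.
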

\begin{remark}\upshape
The various constants in Proposition~\ref{exp-positivity-1} can be made stable as $p\to2$; 
see \cite[Chapter~4, Section~6]{DBGV-mono}.
Thus it recovers the classical theory (cf. \cite{LSU}).
\end{remark}
%%%
\subsubsection{Proof of Proposition~\ref{Lm:Lebesgue-pt} by Proposition~\ref{exp-positivity-1}}\label{S:proof-p}
Suppose to the contrary that for some $(x_o,t_o)\in E_T$
\[
\inf_{\theta>0}\lim_{\rho\to0}\bint_{t_o-2\theta\rho^p}^{t_o-\theta\rho^p}\bint_{K_{\rho}(x_o)}|u_*(x_o,t_o)-u(y,s)|\,\dy\d s=:\dl>0
%\quad\text{ for any }\theta>0.
\]
 For ease of notation, we may assume $(x_o,t_o)=(0,0)$.
Then
there exists %$\dl>0$ independent of $\theta$ and 
a sequence of positive numbers $\{\rho_n: n=1,2,\dots\}$ converging to zero,
such that for any $n=1,2,\dots$
\[
\bint_{-2\theta\rho_n^p}^{-\theta\rho_n^p}\bint_{K_{\rho_n}}|u_*(0,0)-u(y,s)|\,\dy\d s\ge\dl\quad
\text{ for an arbitrary but fixed }\theta>0.
\] 

Assuming $\theta$ has been determined for the moment, by the definition of $u_*$,
for any $\varep\in(0,1)$, there exists $\rho_o(\varep,\theta)>0$, such that for any $\rho\in(0,\rho_o)$,
there holds
\begin{equation*}
u_*(0,0)-\varep\le\essinf_{Q_{4\rho}(2\theta)}u.%\le\essinf_{Q^-_{\rho}(2\theta)}u.
\end{equation*}
Accordingly we fix $n$ so large that $\rho_n<\rho_o$
and introduce a non-negative function in $Q_{4\rho_n}(2\theta)$ by
\[
v:=u-u_*(0,0)+\varep\ge0.
\]
After such an $n$ is fixed, for ease of notation, we still denote $\rho_n$ by $\rho$.
%The cylinder $\mathcal{Q}_{\rho}(2\theta)$ plays the role of $\boldsymbol Q$.
Setting $Q'_\rho(\theta)=K_{\rho}\times(-2\theta\rho^p,-\theta\rho^p)$ for convenience,
we first estimate the integral average of $v$ over $Q'_\rho(\theta)$ from below with the triangle inequality
\begin{equation}\label{Eq:estimate-above}
\biint_{Q'_\rho(\theta)}v\,\dy\d s\ge
\biint_{Q'_\rho(\theta)}|u_*(0,0)-u(y,s)|\,\dy\d s-\varep\ge\tfrac12\dl,
\end{equation}
provided we choose $\varep\le\frac12\dl$.
On the other hand,  letting $M>0$ to be determined,
 we estimate the integral average of $v$ over $Q'_\rho(\theta)$ from above by
\begin{equation}\label{Eq:estimate-below}
\begin{aligned}
&\biint_{Q'_\rho(\theta)}v\,\dy\d s\\
&\quad\le
\frac1{|Q'_\rho(\theta)|}\iint_{Q'_\rho(\theta)\cap[v\le M]}v\,\dy\d s+
\frac1{|Q'_\rho(\theta)|}\iint_{Q'_\rho(\theta)\cap[v> M]}v\,\dy\d s\\
&\quad\le M+2\mu^+\frac{|[v>M]\cap Q'_\rho(\theta)|}{|Q'_\rho(\theta)|},
\end{aligned}
\end{equation}
where
\[
\mu^+:=1+\essup_{E_T}|u|.
\]
We choose $M=\frac14\dl$, and combine \eqref{Eq:estimate-above} and \eqref{Eq:estimate-below}
to obtain
\[
\frac{|[v>M]\cap Q'_\rho(\theta)|}{|Q'_\rho(\theta)|}\ge\frac{\dl}{4\mu^+}.
\] 
This implies that there exists a Lebesgue instant $s\in [-2\theta\rho^p, -\theta\rho^p]$, such that
\begin{equation}\label{Eq:meas-info}
|[v(\cdot, s)>M]\cap K_{\rho}|\ge\frac{\dl}{4\mu^+}|K_{\rho}|.
\end{equation}
Next we may apply Proposition~\ref{exp-positivity-1} for $\mu^-=u_*(0,0)-\varep$ 
within $\boldsymbol Q=Q_{4\rho}(2\theta)$:
there exist $\gm_1>0$ and $\eta\in(0,1)$ depending on the data and $\dl/\mu^+$, such that
\[
v\ge \eta M\quad\text{ a.e. in } K_{\rho}\times\big(s+\tfrac12\gm_1 M^{2-p}\rho^p,s+\tfrac52\gm_1 M^{2-p}\rho^p\big).
\]
Therefore, noticing that $\dl$ is independent of $\theta$, 
we may choose $\theta=\gm_1 M^{2-p}=\gm_1(\frac14\dl)^{2-p}$, such that the above line implies
\[
v\ge \eta M\quad\text{ a.e. in }  Q_{\rho}(\tfrac12\theta),
\]
which in turn yields
\[
u\ge u_*(0,0)+\eta M-\varep\ge u_*(0,0)+\tfrac12\eta M\quad\text{ a.e. in } Q_{\rho}(\tfrac12\theta),
\]
provided we further restrict the choice of $\varep$ by $\varep\le\frac12\eta M$.
The above line however contradicts the definition of $u_*(0,0)$.
%%%%%%%%%%
\subsubsection{Boundedness can be removed when $p\ge2$}\label{S:remove-bd}
We provide an amelioration of Proposition~\ref{Lm:Lebesgue-pt} when $p\ge2$. Namely, 
the assumption on the local boundedness of supersolutions could be removed.
Moreover, the infimum of $\theta$ in Proposition~\ref{Lm:Lebesgue-pt} can be attained when $p=2$.
This can be achieved using the weak Harnack inequality established in \cite{Kuusi-08}; 
see also \cite[Chapter~5, Section~7]{DBGV-mono}.
\begin{theorem}\label{Thm:weak-Harnack}
Let $u$ be a non-negative, local, weak supersolution 
to  \eqref{Eq:p-q} -- \eqref{Eq:1:2p}
with $q=1$ and $p>2$.  There exist positive 
constants $c$ and $\gm_o$, depending only on the data, such that for every Lebesgue instant $s\in(0,T)$
\begin{equation*}%\label{Eq:weak-Harnack}
\bint_{K_\rho(y)}u(x,s)\,\dx\le c
\Big(\frac{\rho^p}{T-s}\Big)^{\frac1{p-2}}
+\gm_o\essinf_{K_{4\rho}(y)}u(\cdot,t)
\end{equation*}
for all times 
\begin{equation*}
s+{\txty\frac12}\theta\rho^p\le t\le s+\theta\rho^p
\end{equation*}
where 
\begin{equation*}%\label{Eq:3:7:2}
\theta=\min\left\{c^{2-p}\frac{T-s}{\rho^p}\,,\,\Big[
\bint_{K_\rho(y)}u(x,s)\,\dx\Big]^{2-p}\right\}.
\end{equation*}
\end{theorem}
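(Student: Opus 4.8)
The statement is, after the dictionary $v=|u|^{q-1}u$ is unwound, precisely the weak Harnack inequality for non-negative supersolutions of the degenerate ($p>2$) parabolic $p$-Laplace equation, so the most economical route is simply to invoke the versions recorded in \cite{Kuusi-08} and in \cite[Chapter~5, Section~7]{DBGV-mono}, reading off how the constants and the intrinsic waiting time depend on $A:=\bint_{K_\rho(y)}u(x,s)\,\dx$ and on $T-s$. If a self-contained argument is wanted, I would first reduce to the essential case. If $A\le c\,(\rho^p/(T-s))^{1/(p-2)}$ the claimed inequality holds for free, and in that range the minimum defining $\theta$ is $c^{2-p}(T-s)/\rho^p$, which only makes the conclusion weaker; hence one may assume $A>c\,(\rho^p/(T-s))^{1/(p-2)}$, so that $\theta=A^{2-p}$ and, for $c$ large, the forward cylinder $K_{4\rho}(y)\times\big(s,s+A^{2-p}\rho^p\big)$ lies in $E_T$. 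The task reduces to
\[
\essinf_{K_{4\rho}(y)}u(\cdot,t)\ge\gm_o^{-1}A\qquad\text{for }s+\tfrac12 A^{2-p}\rho^p\le t\le s+A^{2-p}\rho^p.
\]

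The core is an \emph{integral--to--measure} step: there are $\sigma,\nu\in(0,1)$, depending only on the data, and a Lebesgue instant $\bar s\in[s,s+\tfrac14 A^{2-p}\rho^p]$ with
\[
\big|[\,u(\cdot,\bar s)\ge\sigma A\,]\cap K_{2\rho}(y)\big|\ge\nu\,|K_{2\rho}(y)|.
\]
I would prove this by contradiction. If instead $|[u(\cdot,\tau)\ge\sigma A]\cap K_{2\rho}(y)|<\nu|K_{2\rho}(y)|$ for every $\tau$ in that interval, then testing \eqref{Eq:p-q} (with $q=1$) against a time-independent cutoff $\z$ that equals $1$ on $K_\rho(y)$ and is supported in $K_{2\rho}(y)$ gives, for such $\tau$,
\[
\int_{K_\rho(y)}u(x,s)\,\dx\le\int_{K_{2\rho}(y)}u(x,\tau)\z\,\dx+\gm\int_s^\tau\!\!\int_{K_{2\rho}(y)}|Du|^{p-1}|D\z|\,\dx\dt .
\]
The last term is controlled through the Caccioppoli inequality of Proposition~\ref{Prop:2:1} applied to the truncation $(u-\sigma A)_-$ — the point being that only the gradient of a \emph{truncation} of $u$ appears, so that no upper bound on $u$ is needed — and the first term on the right is split as $\sigma A|K_{2\rho}|$ plus the contribution of the small super-level set, again estimated by the energy inequality together with a De~Giorgi iteration. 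Taking $\sigma,\nu$ small and the interval length comparable to $A^{2-p}\rho^p$ to balance the parabolic scaling forces $A\le\tfrac12 A$, which is impossible since $A>0$.

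With the measure density in hand, feed it into the expansion of positivity: apply Proposition~\ref{exp-positivity-1} in its $p>2$ form with $\mu^-=0$, $M=\sigma A$, centre $y$, radius $2\rho$ and base time $\bar s$ (enlarging the radius, at the cost of a worse density constant, so that the resulting cube covers $K_{4\rho}(y)$). This produces $b>1$ and $\kappa,\eta\in(0,1)$, depending only on the data and $\nu$, with $u(\cdot,t)\ge\eta\sigma A$ a.e.\ on $K_{4\rho}(y)$ throughout an intrinsic time window of the form $\big[\bar s+\tfrac12\kappa b^{p-2}(\eta\sigma A)^{2-p}\rho^p,\ \bar s+\kappa b^{p-2}(\eta\sigma A)^{2-p}\rho^p\big]$; since $\bar s\in[s,s+\tfrac14 A^{2-p}\rho^p]$ and $(\eta\sigma)^{2-p}\rho^p$ is a fixed multiple of $\rho^p$, the free constant $c$ (hence $\theta$ and the term $c(\rho^p/(T-s))^{1/(p-2)}$, which absorbs the case where $(0,T)$ leaves no room for the waiting time) can be calibrated so that this window covers $[s+\tfrac12\theta\rho^p,\ s+\theta\rho^p]$ while $s+\theta\rho^p<T$. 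Setting $\gm_o:=(\eta\sigma)^{-1}$ then finishes the proof, and the same reasoning feeds directly into Proposition~\ref{Lm:Lebesgue-pt} via Section~\ref{S:remove-bd}.

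I expect the integral--to--measure step to be the main obstacle, precisely because $u$ is not assumed locally bounded here: one cannot dismiss the mass on the set where $u$ is large by a crude $L^\infty$ bound, and must instead exploit that the equation controls only gradients of truncations, together with the a.e.-in-time $L^1$ continuity of $\int u\z\,\dx$ underlying the notion of Lebesgue instant (Remark~\ref{Rmk:notion-sol}). Careful tracking of the intrinsic waiting time through this step and through the expansion of positivity is the second delicate point; both are the technical substance of \cite{Kuusi-08}.
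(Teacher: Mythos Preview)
The paper does not prove this theorem at all: it is quoted verbatim from the literature, with the sentence ``This can be achieved using the weak Harnack inequality established in \cite{Kuusi-08}; see also \cite[Chapter~5, Section~7]{DBGV-mono}'' serving as the entire justification. Your first paragraph already matches this exactly, so as a proof of the stated theorem your proposal is complete at that point.

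Everything after your first paragraph is additional content the paper does not attempt. Your sketch is faithful to the strategy actually carried out in \cite{Kuusi-08} and \cite[Chapter~5]{DBGV-mono}: reduce to the intrinsic regime $\theta=A^{2-p}$, establish an integral--to--measure estimate on a suitable time slice, and then feed that measure density into the expansion of positivity (Proposition~\ref{exp-positivity-1}). You also correctly flag the genuine difficulty, namely that the absence of an $L^\infty$ bound on $u$ forces the measure step to go through truncations and the supersolution energy estimate rather than a crude splitting. One caveat: the intermediate step you describe --- bounding $\int_{K_\rho}u(\cdot,s)$ by $\int_{K_{2\rho}}u(\cdot,\tau)\z$ plus a gradient term and then controlling the super-level contribution via a De~Giorgi iteration --- glosses over the most technical part of \cite{Kuusi-08}, where a reverse H\"older-type argument and careful use of the $L^{q}$ estimates for small $q$ are needed to close the loop without boundedness; your outline is correct in architecture but would require substantial work to make rigorous. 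Since the paper itself outsources all of this, your proposal is strictly more informative than the paper's own treatment.
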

%%%
\begin{remark}\label{Rmk:weak-Harnack}\upshape
{\normalfont If $s$ and $\rho$ are chosen such that
\[
s+\frac {2c^{p-2}}{\Big[\dsty\bint_{K_\rho(y)}u(x,s)\,\dx\Big]^{p-2}}\,\rho^p<T,
\]
then
\begin{equation*}
\begin{array}{cc}
\dsty c\left(\frac{\rho^p}{T-s}\right)^{\frac1{p-2}}<\frac1{2^{\frac1{p-2}}}\bint_{K_\rho(y)}u(x,s)\,\dx,\\[10pt]
\dsty \theta=\Big[\bint_{K_\rho(y)}u(x,s)\,\dx\Big]^{2-p},
\end{array}
\end{equation*}
and therefore, 
\begin{equation}\label{WHI}
\bint_{K_\rho(y)}u(x,s)\,\dx\le\bar\gm\essinf_{K_{4\rho}(y)}u(\cdot,t)
\end{equation}
for all times 
\begin{equation*}
s+{\txty\frac12}\theta\rho^p\le t\le s+\theta\rho^p.
\end{equation*}
Moreover, $\bar\gm=\gm_o(1-2^{\frac1{2-p}})^{-1}$, and therefore the constant is stable as $p\to2$.}
\end{remark}
%%%
\noi Another result we will rely on is the following version of Lemma~\ref{DG:p-q-bdry}
for the parabolic $p$-Laplace equation. The proof can be easily extracted from
the proof of Lemma~\ref{DG:p-q-bdry} by letting $q=1$.
\begin{lemma}\label{DG:p-bdry}
Let $u$ be a  local, weak supersolution to the parabolic $p$-Laplace equation in $E_T$.
Assume that $u$ is locally, essentially bounded below in $E_T$.
% There exists a constant
%$\theta>0$ depending only on 
%$a$, $M$, $\mu^-$ and the data, such that 
If  $s$ is a Lebesgue instant and
\[
u(\cdot, s)\ge \mu^- + M\quad\text{ a.e. in }K_\rho(y),
\]
then 
\[
u\ge \mu^- + a M \quad\text{ a.e. in } (y,s)+Q^+_{\frac34 \varrho}(\theta),
\]
where for some $c\in(0,1)$ depending only on the data
\[
\theta=c(1-a)^{N+3}M^{2-p}.
\]
\end{lemma}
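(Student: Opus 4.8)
The plan is to specialize the proof of Lemma~\ref{DG:p-q-bdry} to $q=1$ and to read off the dependence of the threshold on $a$, $M$ and $\theta$. Assume $(y,s)=(0,0)$. When $q=1$ the weight $|s|^{q-1}\equiv 1$, so $\mathfrak g(u,k)=\tfrac12(u-k)_-^2$ and every factor of the form $(|u|+|k|)^{q-1}$, $L^{q-1}$, $\min\{L^{q-1},M^{q-1}\}$, $\max\{L^{q-1},M^{q-1}\}$ that occurred in the general argument collapses to $1$. First I would apply the energy estimate of Proposition~\ref{Prop:2:1} on the forward cylinder $Q^+_\rho(\theta)=K_\rho\times(0,\theta\rho^p)$ (in the notation of Proposition~\ref{Prop:2:1} the level $s-S$ corresponds to $t=0$), with a \emph{time-independent} cutoff $\z=\z(x)$ vanishing on $\pl K_\rho$ and with a level $k\le\mu^-+M$: the term containing $\z_t$ disappears, and the initial term at $t=0$ vanishes since $\mathfrak g(u(\cdot,0),k)=\tfrac12(u(\cdot,0)-k)_-^2=0$ a.e.\ on $K_\rho$ by the hypothesis $u(\cdot,0)\ge\mu^-+M\ge k$. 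Invoking the technical Lemma~2.2 of \cite{BDL} exactly as in the proof of Lemma~\ref{DG:p-q}, one is left for $\tilde k<k$ with
\[
\essup_{0<t<\theta\rho^p}\int_{K_\rho}\z^p(u-\tilde k)_-^2\,\dx+\iint_{Q^+_\rho(\theta)}\z^p|D(u-\tilde k)_-|^p\,\dx\dt\le\gm\iint_{Q^+_\rho(\theta)}(u-k)_-^p|D\z|^p\,\dx\dt.
\]

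Next I would run the De Giorgi iteration as in Lemma~\ref{DG:p-q}, but over forward cylinders of \emph{fixed} height $\theta\rho^p$: take $k_n=\mu^-+aM+(1-a)M\,2^{-n}$ and $\rho_n=\tfrac34\rho+\rho\,2^{-(n+2)}$ (only the spatial cubes $K_n=K_{\rho_n}$ shrink), choose $\z=\z(x)$ with $|D\z|\le\gm\,2^n/\rho$, and set $A_n=[u<k_n]\cap Q_n$ with $Q_n=K_n\times(0,\theta\rho^p)$ and $Y_n=|A_n|/|Q_n|$. Bounding $(u-k_n)_-^p\le M^p$ on $A_n$ on the right-hand side and then combining the control of the two energy quantities with a Hölder splitting and the parabolic Sobolev embedding \cite[Chapter~I, Proposition~3.1]{DB} precisely as in Lemma~\ref{DG:p-q}, one arrives at a recursion
\[
Y_{n+1}\le\frac{\gm\,b^n}{(1-a)^{\frac{N+3}{N+2}}}\Big(\frac{\theta}{M^{2-p}}\Big)^{\frac1{N+2}}Y_n^{1+\frac1{N+2}},
\]
with $\gm,b$ depending only on the data.

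Finally I would invoke the fast geometric convergence lemma \cite[Chapter~I, Lemma~4.1]{DB}: $Y_n\to0$ as soon as $Y_o\le\nu$, where $\nu=c(1-a)^{N+3}M^{2-p}/\theta$ for a suitable constant $c\in(0,1)$ depending only on the data. Since $A_o\subset Q_o$ forces $Y_o\le1$ automatically, it suffices to choose $\theta$ so that $\nu=1$, i.e.\ $\theta=c(1-a)^{N+3}M^{2-p}$. With this choice $Y_\infty=0$, that is $|[u<\mu^-+aM]\cap(K_{\frac34\rho}\times(0,\theta\rho^p))|=0$, and since $Q^+_{\frac34\rho}(\theta)\subset K_{\frac34\rho}\times(0,\theta\rho^p)$ this gives $u\ge\mu^-+aM$ a.e.\ in $Q^+_{\frac34\rho}(\theta)$. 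I do not expect a genuine obstacle here, since the whole content is already present in Lemma~\ref{DG:p-q-bdry}; the only care required is the bookkeeping — keeping the cylinder height fixed rather than letting it shrink with $\rho_n$, and noting that the $q=1$ simplification removes precisely the $L$-dependent factors that forced the more elaborate threshold in Lemma~\ref{DG:p-q-bdry} — so that the powers $(1-a)^{N+3}$ and $M^{2-p}$ appear exactly as claimed.
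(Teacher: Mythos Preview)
Your proposal is correct and follows exactly the route the paper indicates: specialize the proof of Lemma~\ref{DG:p-q-bdry} to $q=1$, so that all factors $L^{q-1}$, $M^{q-1}$, $(|u|+|k|)^{q-1}$ collapse to $1$, yielding the recursion $Y_{n+1}\le \gm b^n(1-a)^{-\frac{N+3}{N+2}}(\theta/M^{2-p})^{\frac1{N+2}}Y_n^{1+\frac1{N+2}}$ and hence $\nu=c(1-a)^{N+3}M^{2-p}/\theta$; setting $\nu=1$ gives the claimed $\theta$. Your bookkeeping (time-independent cutoff, fixed cylinder height, $Y_o\le1$ automatically) is precisely what is needed and matches the paper's argument.
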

\begin{remark}\upshape
For the same purpose, Lemma~\ref{DG:p-bdry} may be replaced by Proposition~\ref{exp-positivity-1} with $\al=1$.
The difference is that  Proposition~\ref{exp-positivity-1} translates measure information into uniform estimates of later times,
whereas Lemma~\ref{DG:p-bdry} propagates quantitative information
without a time lag.
The virtual advantage of Lemma~\ref{DG:p-bdry} is that its proof is much simpler,
and the constant $\gm$ is easily verified to be stable as $p\to2$.
\end{remark}
%\noi Another result we will rely on is the following (cf. \cite[Corollary~3.1]{GSV}).
%\begin{lemma}\label{LBL1}
%Let $u$ be a non-negative, local, weak supersolution  to the parabolic $p$-Laplace type equation  for $p>2$
%in the cylinder $K_{2\rho}(y)\times [\bar t,\bar t+T]$. 
%Suppose that $$\inf_{K_{2\rho}(y)} u(x,\bar t)\geq k\quad\text{ for some }k>0.$$ 
%Then for all $t\in (\bar t,\bar t+T]$ we have
%\begin{equation*}%\label{LB}
%\inf_{K_\rho(y)} u(x,t)\geq \frac{k}{2}\left( 1+\frac{t-\bar t}{\nu k^{2-p} (2\rho)^p}\right)^\frac{1}{2-p},
%\end{equation*}
%where $\nu\in(0,1)$ is a constant that depends only on the data. 
%\end{lemma}
Now we  prove Proposition~\ref{Lm:Lebesgue-pt} without the boundedness assumption on $u$.
The proof starts just like in Section~\ref{S:proof-p}.
Adopting the notations from Section~\ref{S:proof-p}, the argument departs from \eqref{Eq:estimate-above}.
The lower bound in \eqref{Eq:estimate-above} implies that there exists
a Lebesgue instant $s\in [-2\theta\rho^p, -\theta\rho^p]$, such that
\[
\bint_{K_\rho}v(\cdot, s)\,\dy\ge\tfrac12\dl.
\]
Now by Theorem~\ref{Thm:weak-Harnack} (see in particular \eqref{WHI} in Remark~\ref{Rmk:weak-Harnack}), this implies that
for some $\bar\gm>0$ depending only on the data
\begin{equation}\label{Eq:initial}
v(\cdot,s+\tau)\ge\frac{\dl}{\bar\gm}\quad\text{ a.e. in }K_\rho
\end{equation}
for some $\tau$ satisfying
\[
0\le \tau=\Big[\bint_{K_\rho(y)}v(x,s)\,\dx\Big]^{2-p}\rho^p\le(\tfrac12\dl)^{2-p}\rho^p.
\]
With no loss of generality, let us assume that $s+\tau$ is a Lebesgue instant.
Therefore, noticing that $\dl$ is independent of $\theta$, 
we may choose $\theta=2(\tfrac12\dl)^{2-p}$ and apply Lemma~\ref{DG:p-bdry}
with the initial datum \eqref{Eq:initial} to conclude that there exists $c\in(0,1)$ depending only on the data, such that
\[
v(\cdot, t)\ge\frac{\dl}{2\bar\gm}\quad\text{ a.e. in }K_{\frac12\rho}
\]
for all times
\[
s+\tau<t<s+\tau+c\Big(\frac{\dl}{\bar\gm}\Big)^{2-p}\rho^p.
\]
We claim that the above time interval covers $(-\frac12\theta\rho^p,\frac12\theta\rho^p)$.
Indeed, notice first that $s+\tau\in(-2\theta\rho^p,-\frac12\theta\rho^p)$.
Next when $p>2$, we may choose $\bar\gm$ even smaller to ensure that
$c(\dl/\bar\gm)^{2-p}>3\theta=6(\tfrac12\dl)^{2-p}$.

As a result, there exists $\bar\gm>0$ depending only on the data, such that
\[
v\ge \frac{\dl}{2\bar\gm} \quad\text{ a.e. in }Q_{\frac12\rho}(\theta),
\]
which yields a contradiction just like in Section~\ref{S:proof-p}.

When $p=2$, we may apply Lemma~\ref{DG:p-bdry} finite times (at most $5/c$) to reach a similar contradiction.
Moreover, now the choice of $\theta$ can be made independent of $\dl$.
As a result, the infimum of $\theta$ can actually be attained at a real number.
\hfill $\square$
%%%
\begin{remark}\upshape
A weak Harnack inequality for $1<p<2$  remains elusive.
A partial answer based on the comparison principle can be found in \cite[Proposition~3.1]{GLL}.
Whenever it is available, one may perform a similar reasoning as above
to remove the boundedness assumption in Proposition~\ref{Lm:Lebesgue-pt}.
\end{remark}
%%%%%%%%%%%%%%%
\subsection{The porous medium type equation}\label{S:3:3}
In this section we deal with the case $p=2$ and $q>0$. 
We shall write \eqref{Eq:p-q} -- \eqref{Eq:1:2p} in a more conventional form of
the porous  medium type equation (cf. Remark~\ref{Rmk:p-q}). 
That is, we shall consider the quasilinear, parabolic partial differential equation
\begin{equation}  \label{Eq:PME}
	u_t-\dvg\bl{A}\big(x,t,u, D(|u|^{m-1}u)\big) = 0\quad \mbox{ weakly in $ E_T$}
\end{equation}
where $m>0$ and the function ${\bf A}(x,t,u,\xi)$ satisfies the structure conditions \eqref{Eq:1:2p} with $p=2$.
%\begin{equation}  \label{Eq:1:2}
%\left\{
%\begin{array}{l}
%\bl{A}(x,t,u,\xi)\cdot \xi\ge mC_o |u|^{m-1} |\xi|^2 \\
%|\bl{A}(x,t,u,\xi)|\le mC_1 |u|^{m-1}|\xi|%
%\end{array}%
%\right .\quad \text{ a.e.}\> (x,t)\in E_T,\, \forall\,u\in\rr,\,\forall\xi\in\rn
%\end{equation}
%where $C_o$ and $C_1$ are given positive constants, and $m>0$.
%The prototype equation in this case is the porous medium equation:
%\[
%u_t-m\dvg |u|^{m-1}Du=0\quad\text{ weakly in }E_T.
%\]
For a compact set $K\subset\rr^N$ and
 a cylinder $\boldsymbol Q:=K\times(T_1,T_2)\subset E_T$
we introduce the numbers $\mu^{\pm}$ satisfying
\[
\mu^+\ge\essup_{\boldsymbol Q}u,\quad \mu^-\le\essinf_{\boldsymbol Q} u.
%\quad\om\ge\mu^+-\mu^-.
\]
We also assume $(y,s)\in\boldsymbol Q$, such that the following forward
cylinders
\begin{equation*}
\left\{
\begin{array}{ll}
%{\dsty K_{16\rho}(y)\times\left(s,s+\dl M^{2-p}\rho^p\right]}\quad&\text{ for }1<p<2,\\
%{\dsty K_{16\rho}(y)\times\left(s,s+\frac{b^{p-2}}{(\eta M)^{p-2}}\dl\rho^p\right]}\quad&\text{ for }p>2,\\
{\dsty K_{4\rho}(y)\times\left(s,s+\kappa M^{1-m}\rho^2\right)}\quad&\text{ for }0<m<1,\\[5pt]
{\dsty K_{4\rho}(y)\times\left(s,s+\frac{b^{m-1}}{(\eta M)^{m-1}}\kappa\rho^2\right)}\quad&\text{ for }m>1,
\end{array}\right.
\end{equation*}
are included in $\boldsymbol Q$,
where $M$ is a positive number and
 the parameters $b$, $\eta$ and $\kappa$ will be determined in the following proposition
in terms of the data only. This can always be made true if we choose $\rho$
small enough. The main tool is the expansion of positivity from Proposition~7.1 and 
Proposition~7.2 in Chapter 4 of \cite{DBGV-mono}, which has been adapted in the following form in \cite{Liao-JDE}
to deal with the H\"older regularity of weak solutions to the porous medium type equation.
The novelty is that it can deal with signed solutions.
%%%
\begin{proposition}\label{exp-positivity-2}
Let $u$ be a locally bounded below, local, weak supersolution to the porous medium type 
equation \eqref{Eq:PME} in $E_T$. %Assume $u$ is locally, essentially bounded below.
Suppose that $s$ is a Lebesgue instant and
\[
\left|\big[u(\cdot, s)-\mu^-\ge M\big]\cap K_\rho(y)\right|\ge\al |K_\rho|
\]
for some $M>0$ and $\al\in(0,1)$. When $0<m<1$, 
there exist constants $\xi,\,\kappa,\,\eta\in(0,1)$
depending only on the data and $\al$, such that
either $|\mu^{-}|>\xi M$ or
\[
u(\cdot, s)-\mu^-\ge \eta M\quad\text{ a.e. in }\ K_{\rho}(y)
\]
for all times
\[
s+\tfrac12\kappa M^{1-m}\rho^2\le t\le s+\kappa M^{1-m}\rho^2.
\]
When $m>1$, there exist constants $b>1$, $\kappa,\,\eta,\,\xi\in(0,1)$,
depending only on the data and $\al$, such that either $|\mu^{-}|>\xi M$ or
\[
u(\cdot, s)-\mu^-\ge \eta M\quad\text{ a.e. in }\ K_{\rho}(y)
\]
for all times
\[
s+\frac{b^{m-1}}{(\eta M)^{m-1}}\tfrac12\kappa\rho^2\le t\le s+
\frac{b^{m-1}}{(\eta M)^{m-1}}\kappa\rho^2.
\]
\end{proposition}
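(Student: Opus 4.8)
The plan is to reduce the signed statement to the expansion of positivity for \emph{non-negative} supersolutions of porous medium type equations---available from Propositions~7.1 and~7.2 in Chapter~4 of \cite{DBGV-mono}, in the form recorded in \cite{Liao-JDE}---exploiting the dichotomy already built into the statement. First I would discard the cheap alternative: fix a small parameter $\xi\in(0,1)$, to be pinned down at the very end in terms of the data and $\al$ only; if $|\mu^-|>\xi M$ there is nothing to prove, so from now on I assume $|\mu^-|\le\xi M$.

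Next I would pass to the non-negative function $v:=u-\mu^-$. Since $\mu^-\le\essinf_{\boldsymbol Q}u$ one has $v\ge0$ on $\boldsymbol Q$, and the hypothesis becomes $|[v(\cdot,s)\ge M]\cap K_\rho(y)|\ge\al|K_\rho|$. Writing $\Phi(\tau):=|\tau|^{m-1}\tau$, the function $v$ is a non-negative, local, weak supersolution of $v_t-\dvg\bl{A}(x,t,v+\mu^-,D\Phi(v+\mu^-))=0$ with $D\Phi(v+\mu^-)=m|v+\mu^-|^{m-1}Dv$. The point is that all the truncation levels used in the expansion-of-positivity machinery---logarithmic estimates, measure shrinking, and the De Giorgi ``from-measure-to-uniform'' lemma---are comparable to $M$: they are bounded above by a fixed multiple of $M$ and below by $\eta M$ (the constant $\eta$ of the conclusion), with $\eta$ depending only on the data and $\al$. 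On that band, $|\mu^-|\le\xi M$ with $\xi\le\tfrac14\eta$ keeps $v+\mu^-$ between $\tfrac12 v$ and $2v$, so the diffusion $m|v+\mu^-|^{m-1}$ stays pinched between two multiples of $mM^{m-1}$ depending only on $m$. Hence, as far as the iteration is concerned, $v$ is an honest non-negative supersolution of a porous medium type equation with structure constants depending only on $C_o$, $C_1$, $m$ (independently of $\mu^-$, $M$, $\rho$); equivalently $w:=v^m$ is a non-negative supersolution of an instance of \eqref{Eq:p-q}--\eqref{Eq:1:2p} (with $q=1/m$, $p=2$) for which the energy estimate of Proposition~\ref{Prop:2:1} holds with uniformly controlled constants.

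With this in hand I would simply run the non-negative expansion of positivity (Propositions~7.1--7.2 of \cite{DBGV-mono}, Chapter~4, in the form of \cite{Liao-JDE}) on $v$: for $0<m<1$ it yields $\kappa,\eta\in(0,1)$, depending only on the data and $\al$, with
\[
v(\cdot,t)\ge\eta M\quad\text{a.e. in }K_\rho(y)\ \text{ for }\ s+\tfrac12\kappa M^{1-m}\rho^2\le t\le s+\kappa M^{1-m}\rho^2 ,
\]
and for $m>1$ it yields in addition a constant $b>1$ and the same conclusion on the time interval of the stated $M$- and $b$-dependent form---precisely the interval the forward cylinders assumed before the statement were designed to contain. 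Since $v=u-\mu^-$, this is exactly $u(\cdot,t)-\mu^-\ge\eta M$ on those ranges; fixing $\xi:=\tfrac14\eta$ closes the loop, and $\xi,\kappa,\eta$ (and $b$) all depend only on the data and $\al$.

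The hard part, as usual, is justifying the comparability claim in the second paragraph. The substitution $v=u-\mu^-$ turns the porous medium operator $D(|u|^{m-1}u)$ into $D\Phi(v+\mu^-)$, whose diffusion $m|v+\mu^-|^{m-1}$ degenerates at the \emph{interior} level $v=-\mu^-$, not at the boundary value $v=0$; it is this interior degeneracy that rules out an unconditional expansion of positivity and makes the alternative $|\mu^-|>\xi M$ unavoidable. One therefore has to retrace the estimates behind Propositions~7.1--7.2 with $u$ and $\bl{A}$ replaced by $v$ and the transformed operator, verifying at each stage that the relevant cut-offs stay in the band where $m|v+\mu^-|^{m-1}\simeq mM^{m-1}$ and that the emerging constants remain independent of $\rho$ and stable as $m\to1$. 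The remaining bookkeeping---translating $v$ back to $u$, reconciling $\xi$ with $\eta$, and keeping track of the intrinsic time scales---is routine.
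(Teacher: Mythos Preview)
The paper does not prove Proposition~\ref{exp-positivity-2}; it simply quotes the result from Propositions~7.1--7.2 in Chapter~4 of \cite{DBGV-mono} as adapted for signed solutions in \cite{Liao-JDE}. Your overall strategy---use the alternative $|\mu^-|>\xi M$ to discard one branch, then on the complementary branch push the shifted function $v=u-\mu^-\ge0$ through the non-negative expansion of positivity---is indeed the route taken in \cite{Liao-JDE}.

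The genuine gap is the ``pinching'' mechanism you invoke to justify the reduction. You claim that on the band of truncation levels $k\in[\eta M, cM]$ the diffusion $m|v+\mu^-|^{m-1}$ is trapped between two multiples of $mM^{m-1}$, because $|\mu^-|\le\tfrac14\eta M$ forces $v+\mu^-\in[\tfrac12 v,2v]$. But the energy and logarithmic estimates behind Propositions~7.1--7.2 are written over the sets $[v<k]$, on which $v$ ranges all the way down to $0$; there $v+\mu^-$ is close to $\mu^-$, not to $v$, and $m|v+\mu^-|^{m-1}$ is \emph{not} comparable to $mM^{m-1}$ (it is much smaller when $m>1$ and blows up when $0<m<1$). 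So the equation for $v$ cannot be treated as uniformly parabolic on the working sets, and the companion claim that $w:=v^m$ solves an instance of \eqref{Eq:p-q}--\eqref{Eq:1:2p} with fixed structure constants fails for the same reason: the shift $u\mapsto u-\mu^-$ does not commute with $\Phi(\tau)=|\tau|^{m-1}\tau$.

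What actually happens in \cite{Liao-JDE} is that one does \emph{not} linearize. One reruns the degenerate/singular expansion of positivity directly with the energy estimate of Proposition~\ref{Prop:2:1} (equivalently, the estimates in \cite{DBGV-mono}), keeping the degeneracy intact. The condition $|\mu^-|\le\xi M$ enters only through the bookkeeping quantities---for instance $L=\max\{|\mu^-|,|\mu^-+M|\}$ and the ratios $L/M$ that govern the intrinsic time scale---which under that smallness are uniformly comparable to $M$; this is precisely what makes the constants $\kappa,\eta,\xi,b$ depend on the data and $\al$ only, rather than on $\mu^-$, $M$, or $\rho$. If you replace the pinching paragraph by this observation and retrace the iteration, the rest of your outline goes through.
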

%%%%%%
\subsubsection{Proof of Proposition~\ref{Lm:Lebesgue-pt} by Proposition~\ref{exp-positivity-2}}
We proceed exactly as in Section~\ref{S:proof-p}. Under current setting, we take $p=2$.
After a similar reasoning, we obtain an analog of \eqref{Eq:meas-info}, 
i.e., there exists  a Lebesgue instant $s\in [-2\theta\rho^2, -\theta\rho^2]$, such that
\begin{equation}\label{Eq:meas-info-1}
\left|\big[v(\cdot, s)>M\big]\cap K_{\rho}\right|\ge\frac{\dl}{4\mu^+}|K_{\rho}|
\end{equation}
 for $M=\frac14\dl>0$,
where $\dl$ is independent of $\theta$.
Recall also that $v:=u-\mu^-=u-u_*(0,0)+\varep$ is a non-negative function in $Q_{4\rho}(2\theta)$
and $\mu^+:=1+\essup_{E_T}|u|$.

Starting from the measure information in \eqref{Eq:meas-info-1}, %and letting $\varep\in(0,1)$ to be determined,
 we may apply Proposition~\ref{exp-positivity-2} for $\mu^-=u_*(0,0)-\varep$ within $\boldsymbol Q=Q_{4\rho}(2\theta)$.
Consequently, there exist $\gm_1>0$  and $\eta,\,\xi\in(0,1)$ depending on the data and $\dl/\mu^+$, 
such that either $|\mu^-|>\xi M$ or
\[
v\ge \eta M\quad\text{ a.e. in } K_{\rho}\times\big(s+\tfrac12\gm_1 M^{1-m}\rho^2,s+2\gm_1 M^{1-m}\rho^2\big).
\]
Therefore, noticing that $\dl$ is independent of $\theta$, 
we may choose $\theta=\gm_1 M^{1-m}=\gm_1(\frac14\dl)^{1-m}$, such that either $|\mu^-|>\xi M $ or
\[
v\ge \eta M\quad\text{ a.e. in }  Q_{\rho}(\tfrac12\theta),
\]
which implies
\[
u\ge u_*(0,0)+\eta M-\varep\ge u_*(0,0)+\tfrac12\eta M\quad\text{ a.e. in }  Q_{\rho}(\tfrac12\theta),
\]
provided we further restrict the choice of $\varep$ by $\varep\le\frac12\eta M$.
This however contradicts the definition of $u_*(0,0)$.

Next we consider the case $\mu^-=u_*(0,0)-\varep>\xi M$. %By the definition of $u_*$,
%for any $\varep>0$, 
%there exists $\rho_o(\varep,\theta)>0$, such that for any $\rho\in(0,\rho_o)$,
%there holds
Recall that $\rho=\rho_n$ has been chosen such that
\begin{equation}\label{Eq:lower-bd}
\xi M<u_*(0,0)-\varep\le\essinf_{Q_{4\rho}(2\theta)}u.%\le\essinf_{Q^-_{\rho}(2\theta)}u.
\end{equation}
%Since $\rho_n\to0$ we may assume the above inequality holds with $\rho=\rho_n$ for some $n$.
%On the other hand, such an $n$ being fixed, we will keep using $\rho$ instead of $\rho_n$ for simplicity.
The lower bound in \eqref{Eq:lower-bd} shows that the partial differential equation \eqref{Eq:PME}
is non-degenerate within $Q_{4\rho}(2\theta)$ since
\[
\xi M\le u\le \mu^+\quad\text{ a.e. in }Q_{4\rho}(2\theta).
\]
By the classical parabolic theory in \cite{LSU}, the measure information in \eqref{Eq:meas-info-1}
will yield quantitative expansion of positivity around the center of the cylinder $Q_{\rho}(\theta)$; see also
Proposition~5.1 and Remark~5.1 in \cite{Liao}.
Precisely, there exists $\tilde{\eta}\in(0,1)$ depending on 
$\xi M$, $\mu^+$, $\dl/\mu^+$, $\theta$ and the data, such that
\[
u-\mu^-=u-u_*(0,0)+\varep\ge\tilde{\eta}M\quad\text{ a.e. in } Q_{\frac12\rho}(\theta).
\]
As before, this yields a contradiction to the definition of $u_*(0,0)$
provided we impose $\varep<\frac12\tilde{\eta}M$.

Finally, we treat the case $\mu^-=u_*(0,0)-\varep<-\xi M$. 
%By taking $\varep<\frac12\xi\eta$,
%we actually have $u_*(0,0)<-\frac12\xi\eta$. Then the definition of $u_*(0,0)$ yields that
%\[
%\essinf_{Q_\rho(\theta)} u\le-\frac12\xi\eta
%\]
This case can also be attributed to the classical theory in \cite{LSU}.
Indeed, we may choose the level $k<\mu^-+\frac12\xi M$
and $(u-k)_-\z^2$ as a test function against the equation \eqref{Eq:PME}.
Notice that on the set $[u<k]$ there holds
\[
-\mu^+<\mu^-\le u<k<\mu^-+\tfrac12\xi M<-\tfrac12\xi M,
\]
that is,
\[
\tfrac12\xi M\le |u|\chi_{[u<k]}\le \mu^+.
\]
The potential degeneracy at $[u=0]$ has been avoided
and $u$ will be a member of the parabolic De Giorgi class in \cite{LSU, Liao}.
Therefore 
the measure information in \eqref{Eq:meas-info-1}
will yield pointwise information around the center of the cylinder $Q_{\rho}(\theta)$.
Precisely, there exists $\hat{\eta}\in(0,1)$ depending on 
$\xi M$, $\mu^+$, $\dl/\mu^+$, $\theta$ and the data, such that
\[
u-\mu^-=u-u_*(0,0)+\varep\ge\hat{\eta}M\quad\text{ a.e. in } Q_{\frac12\rho}(\theta).
\]
As before, this yields  a contradiction to the definition of $u_*(0,0)$
provided we impose $\varep<\frac12\hat{\eta}M$.

As a result, we have shown that %there exists $\theta>0$, such that
\[
\inf_{\theta>0}\lim_{\rho\to0}\bint_{-2\theta\rho^p}^{-\theta\rho^p}\bint_{K_{\rho}}|u_*(0,0)-u(y,s)|\,\dy\d s=0.
\]
Due to the transformation made at the beginning of Section~\ref{S:3:3}, 
we still need to show that for $\vp_m(u):=|u|^{m-1}u$,
\[
\inf_{\theta>0}\lim_{\rho\to0}\bint_{-2\theta\rho^p}^{-\theta\rho^p}\bint_{K_{\rho}}\big|\vp_m\big(u_*(0,0)\big)-\vp_m\big(u(y,s)\big)\big|\,\dy\d s=0.
\]
This, however, can be easily deduced from the proven result.
\begin{remark}\upshape
One wonders if the assumption on the boundedness of supersolutions could be removed just like in Section~\ref{S:remove-bd}.
Although a weak Harnack inequality is available for the porous medium type equation 
(cf. \cite[Chapter~5, Section~17]{DBGV-mono} and \cite{WHI-PME}),
it is unclear how to apply it for signed weak supersolutions.
\end{remark}
\begin{remark}\upshape
When $u_*$ has a sign, we are essentially in the non-degenerate case, 
due to the peculiar structure of the porous medium type equation. 
In such a case, the arguments presented here may be refined to show that 
$\inf_{\theta>0}$ in Proposition~\ref{Lm:Lebesgue-pt} can be attained; see \cite[Theorem~2.1]{Ziemer-82} in this connection.
\end{remark}
%%%%%%%%%%%%%%%%%%%%%%%%%%%%%%%%%%%%%%%%%%%%%%%%%%%%%%%%%%%%%%%%%%%
%%%%%%%%%%%%%%%%%%%%%%%%%%%%%%%%%%%%%

%%
\bye
\begin{thebibliography}{99}
%%%%%%%%%%%%%%%%%%%%%%%%%%%%%%%%%%%%%%%%%%%%%%%%
\bibitem{AL} B. Avelin and T. Lukkari, {\it Lower semicontinuity of weak supersolutions to the porous medium equation},
 Proc. Amer. Math. Soc., {\bf143}(8), (2015), 3475--3486.
 %%%%
\bibitem{BBGS} A. Bj\"orn, J. Bj\"orn, U. Gianazza and J. Siljander, 
{\it Boundary regularity for the porous medium equation}, 
Arch. Ration. Mech. Anal., {\bf230}(2), (2018), 493--538.
 %%%%
 \bibitem{BDL} V. B\"ogelein, F. Duzaar and N. Liao,
 {\it On the H\"older regularity of signed solutions to a doubly nonlinear equation},
 arXiv:2003.04158. 
%%%%%%%%%%%%%%%%%%%%%%%%%%%%%%%%%%
\bibitem{DB} E. DiBenedetto, ``Degenerate Parabolic 
Equations", Universitext, Springer-Verlag, New York, 1993.  
%%%%%%%%%%%%%%%%%%%%%%%%%%%%%%%%%%
%%%%%%%%%%%%%%%%%%%%%%%%%%%%%%
\bibitem{DB-real-analysis} E. DiBenedetto, ``Real analysis", Second edition, 
Birkh\"auser Advanced Texts: Basel Textbooks, Birkh\"auser/Springer, New York, 2016. 
%%%%%%%%%%%%%%%%%%%%%%%%%%%%%%%%%%
\bibitem{DBGV-acta} E. DiBenedetto, U. Gianazza and V. Vespri, 
{\it Harnack estimates for quasi-linear degenerate parabolic differential equations}, 
Acta Math., {\bf 200}(2), (2008), 181-209. 
%%%%%%%%%%%%%%%%%%%%%%%%%%%%%%
\bibitem{DBGV-mono} E. DiBenedetto, U. Gianazza and V. Vespri, 
``Harnack's Inequality for Degenerate and Singular Parabolic 
Equations", Springer Monographs in Mathematics, Springer-Verlag, 
New York, 2012.
%%%%%%%%%%%%%%%%%%%%%%%%%%%%%%%%%%
\bibitem{DBGV-16} E. DiBenedetto, U. Gianazza and V. Vespri, 
{\it Remarks on local boundedness and local H\"older continuity of 
local weak solutions to anisotropic $p$-Laplacian type equations},
J. Elliptic Parabol. Equ., {\bf2}(1-2), (2016), 157--169. 
%%%
\bibitem{GLL} U. Gianazza, N. Liao and T. Lukkari, 
{\it A boundary estimate for singular parabolic diffusion equations}, 
NoDEA Nonlinear Differential Equations Appl., {\bf 25}(4), (2018), 24pp.
%%%
%\bibitem{GSV} U. Gianazza, M. Surnachev and V. Vespri, {\it On a new 
%proof of H\"older continuity of solutions of $p$-Laplace 
%type parabolic equations},  Adv. Calc. Var., {\bf3}(3), (2010), 263--278.
%%%
\bibitem{GT} D. Gilbarg and N. S. Trudinger,
``Elliptic partial differential equations of second order", 
Reprint of the 1998 edition, Classics in Mathematics, Springer-Verlag, Berlin, 2001.
%%%
\bibitem{HKM} J. Heinonen, T. Kilpel\"ainen and O. Martio,
``Nonlinear potential theory of degenerate elliptic equations", 
 Dover Publications, Inc., Mineola, NY, 2006.
%%%%%%%%%%%%%%%%%%%%%%%%%%%%%%%%%%
\bibitem{KL-96} T. Kilpel\"ainen and P. Lindqvist,
{\it On the Dirichlet boundary value problem for a degenerate parabolic equation},
SIAM J. Math. Anal., {\bf27}(3), (1996),  661--683.
%%%
\bibitem{KL-06} J. Kinnunen and P.  Lindqvist, 
{\it Pointwise behaviour of semicontinuous supersolutions to a quasilinear parabolic equation},
 Ann. Mat. Pura Appl. (4), {\bf185}(3), (2006), 411--435.
%%%%
\bibitem{KL-08} J. Kinnunen and P.  Lindqvist, 
{\it Definition and properties of supersolutions to the porous medium equation}, 
J. Reine Angew. Math., {\bf618}, (2008), 135--168.
%%%
\bibitem{KLL} J. Kinnunen, P. Lindqvist and T. Lukkari, 
{\it Perron's method for the porous medium equation}, J. Eur. Math. Soc. (JEMS), {\bf18}(12), (2016), 2953--2969.
%%%%
\bibitem{KKP-10} R. Korte, T. Kuusi and M. Parviainen, {\it A connection between a general class of superparabolic functions and supersolutions}, J. Evol. Equ., 10, (2010), 1--20.
%%%%%%%
\bibitem{Kuusi-08} T. Kuusi, {\it Harnack estimates for weak 
supersolutions to nonlinear 
degenerate parabolic equations}, Ann. Scuola Norm. Sup. Pisa 
Cl. Sci. (5),  {\bf 7}(4), (2008), 673--716.
%%%%
\bibitem{Kuusi} T. Kuusi, 
{\it Lower semicontinuity of weak supersolutions to nonlinear parabolic equations}, 
Differential Integral Equations, {\bf 22}(11-12), (2009), 1211--1222.
%%%
\bibitem{LU} O. A. Ladyzhenskaya and N. N. Ural'tseva, 
``Linear and Quasilinear Elliptic Equations", Academic Press, New York, 1968.
%%%%%%%%%%%%%%%%%%%%%%%%%%%%%%%%%%%%
%%%%%%%%%%%%%%%%%%%%%%%%%%%%%%
\bibitem{LSU} O. A. Ladyzhenskaya, V. A. Solonnikov and N. N. Ural'tseva, 
``Linear and Quasilinear Equations of Parabolic Type'', Translations of Mathematical Monographs, Vol. 23,
American Mathematical Society, Providence, R.I., 1968.
%%%%%%%%%%%%%%%%%%%%%%%%%%%%%%
\bibitem{WHI-PME} P. Lehtel\"a,  
{\it A weak Harnack estimate for supersolutions to the porous medium equation},
Differential Integral Equations, {\bf30}(11-12), (2017), 879--916.
%%%%%%%%%%%%%%%%%%%%%%%%%%%%%%
\bibitem{Liao-JDE} N. Liao,
{\it A unified approach to the H\"older regularity of solutions to degenerate and singular parabolic equations},
J. Differential Equations, {\bf268}(10), (2020), 5704--5750.
%%%%%%%%%%%%%%%%%%%%%
\bibitem{Liao} N. Liao, {\it Remarks on parabolic De Giorgi classes}, arXiv:2004.14324.
%%%
\bibitem{Lieberman} G. Lieberman, 
``Second Order Parabolic Differential Equations", World Scientific Publishing Co., Inc., River Edge, NJ, 1996.
%%%%%%%%%%%%%%%%%%%%%%%%%%%%%%
\bibitem{Lindqvist} P. Lindqvist, {\it Regularity of supersolutions}, 
in ``Regularity estimates for nonlinear elliptic and parabolic problems", 73--131, 
Lecture Notes in Math., 2045, Fond. CIME/CIME Found. Subser., Springer, Heidelberg, 2012.
%%%
\bibitem{MZ} J. H. Michael and W. P. Ziemer, 
{\it Interior regularity for solutions to obstacle problems}, Nonlinear Anal., {\bf10}(12), (1986),  1427--1448. 
%%%
\bibitem{Trud} N. Trudinger, 
{\it On the regularity of generalized solutions of linear, non-uniformly elliptic equations},
Arch. Rational Mech. Anal., {\bf42}, (1971), 50--62.
%%%
\bibitem{Watson} N. A. Watson, 
``Introduction to heat potential theory", Mathematical Surveys and Monographs, 182,
American Mathematical Society, Providence, RI, 2012.
%%%
\bibitem{Ziemer-82} W. P. Ziemer, 
{\it Interior and boundary continuity of weak solutions of degenerate parabolic equations},
 Trans. Amer. Math. Soc., {\bf271}(2), (1982), 733--748.
%%%
%\bibitem{Ziemer-83} W. P. Ziemer, 
% {\it Mean values of subsolutions of elliptic and parabolic equations},
%  Trans. Amer. Math. Soc., {\bf279}(2), (1983),  555--568.
  %%%
%  \bibitem{Ziemer-88} W. P. Ziemer, 
% {\it Regularity of weak solutions of parabolic variational inequalities},
%  Trans. Amer. Math. Soc., {\bf309}(2), (1988),  763--768.
  %%%
\bibitem{Ziemer-89} W. P. Ziemer,  
``Weakly differentiable functions. 
Sobolev spaces and functions of bounded variation", 
Graduate Texts in Mathematics, 120, Springer-Verlag, New York, 1989.
\end{thebibliography}
